\documentclass{amsart}

\usepackage{amssymb, amsmath,  amsfonts, graphicx} 
\usepackage{color}
 \DeclareGraphicsRule{*}{mps}{*}{}

 \usepackage{amsrefs}

\newtheorem{thm}{Theorem}[section]
\newtheorem{prop}[thm]{Proposition}
\newtheorem{lemma}[thm]{Lemma}
\newtheorem{cor}[thm]{Corollary}
\newtheorem{defn}[thm]{Definition}
\newtheorem{lem-defn}[thm]{Lemma/Definition}
\newtheorem{defn-eg}[thm]{Definition/Example}

\newtheorem{example}[thm]{Example}
\newtheorem{remark}[thm]{Remark}


\newtheorem{prop-defn}[thm]{Proposition/Definition}
\newcommand{\circline}{\!\;\!\!\circ\!\!\;\!\!-\!\!\!-\!}

  \numberwithin{equation}{section}




\usepackage{graphicx}
\makeatletter
\DeclareRobustCommand\widecheck[1]{{\mathpalette\@widecheck{#1}}}
\def\@widecheck#1#2{%
   \setbox\z@\hbox{\m@th$#1#2$}%
   \setbox\tw@\hbox{\m@th$#1%
      \widehat{%
         \vrule\@width\z@\@height\ht\z@
         \vrule\@height\z@\@width\wd\z@}$}%
   \dp\tw@-\ht\z@
   \@tempdima\ht\z@ \advance\@tempdima2\ht\tw@ \divide\@tempdima\thr@@
   \setbox\tw@\hbox{%
      \raise\@tempdima\hbox{\scalebox{1}[-1]{\lower\@tempdima\box\tw@}}}%
   {\ooalign{\box\tw@ \cr \box\z@}}}
\makeatother

\begin{document}{\allowdisplaybreaks[4]

\title{On equivariant quantum Schubert calculus for $G/P$}

\author{Yongdong Huang}
\address{Department of Mathematics,
           Jinan University, Guangzhou, Guangdong, China}
\email{tyongdonghuang@jnu.edu.cn}
\thanks{ 
 }


\author{Changzheng Li}
\address{Center for Geometry and Physics, Institute for Basic Science (IBS), Pohang 790-784, Republic of   Korea}
\email{czli@ibs.re.kr}

\date{
      }




\begin{abstract}
 We show a $\mathbb{Z}^2$-filtered algebraic structure and  a ``quantum to classical" principle  on  the torus-equivariant quantum cohomology of a complete flag variety of general Lie type, generalizing earlier works of Leung and the second author. We also provide various applications on equivariant quantum Schubert calculus, including
  an equivariant quantum Pieri rule for  partial flag variety  $F\ell_{n_1, \cdots, n_k; n+1}$ of Lie type $A$.
 \end{abstract}

\maketitle

\section{Introduction}
The complex Grassmannian $Gr(m, n+1)$ parameterizes    $m$-dimensional complex
vector subspaces of $\mathbb{C}^{n+1}$.
The integral  cohomology ring $H^*(Gr(m, n+1), \mathbb{Z})$   has  an additive   basis of Schubert classes $\sigma^\nu$, indexed by partitions $\nu=(\nu_1, \cdots, \nu_m)$ inside an $m\times (n+1-m)$ rectangle:   $n+1-m\geq \nu_1\geq  \cdots\geq \nu_m\geq 0$.
The initial classical Schubert calculus, in modern language,   refers to the study of   the ring structure of $H^*(Gr(m, n+1), \mathbb{Z})$. The content includes
\begin{enumerate}
  \item a Pieri rule,
   giving a 
   combinatorial formula for the cup product by a set of generators of the cohomology ring, for instance by the special Schubert classes $\sigma^{1^p}$, $1\leq p\leq m$, where  $1^p=(1,\cdots, 1, 0, \cdots, 0)$ has precisely $p$ copies of 1;
  \item more generally, a Littlewood-Richardson rule,  giving a (manifestly positive) combinatorial formula of the structure constants $N_{\mu, \nu}^\eta$ in the cup product
          $\sigma^\mu\cup \sigma^\nu=\sum_{\eta}N_{\mu, \nu}^\eta \sigma^\eta $;
  \item a ring presentation of $H^*(Gr(m, n+1), \mathbb{Z})$;
  \item  a Giambelli formula, expressing every  $\sigma^\nu$ as a polynomial in special Schubert classes.
\end{enumerate}
The complex Grassmannian $Gr(m, n+1)$ 
 is a special case of homogeneous varieties $G/P$, where  $G$ denotes the adjoint group of a  complex simple Lie algebra of rank $n$, and $P$ denotes  a parabolic subgroup of $G$.
The classical Schubert calculus, in  general,
    refers to the study of the classical cohomology ring $H^*(G/P)=H^*(G/P, \mathbb{Z})$.

There are various   extensions of the classical Schubert calculus by replacing ``classical" with ``equivariant", ``quantum", or ``equivariant quantum". The equivariant quantum Schubert calculus for $G/P$  refers to the study  of the (integral) torus-equivariant quantum cohomology ring $QH^*_T(G/P)$,  
which 
  is a deformation of the ring structure of the torus-equivariant cohomology $H^*_T(G/P)$ by incorporating genus zero, three-point equivariant Gromov-Witten invariants.
 In analogy with $H^*(Gr(m, n+1))$, the ring $QH^*_T(G/P)$ has  a basis of Schubert classes $\sigma^u$ over $H^*_T(\mbox{pt})[q_1, \cdots, q_k]$  where $k:=\dim H_2(G/P)$, indexed by elements in a subset $W^P$ of the   Weyl group $W$ of $G$.
The structure coefficients $N_{u, v}^{w, \mathbf{d}}$ of the equivariant quantum product
 $$\sigma^u\star \sigma^v=\sum_{w\in W^P, \mathbf{d}\in H_2(G/P, \mathbb{Z})}N_{u, v}^{w, \mathbf{d}}  \sigma^w q^{\mathbf{d}}  $$
 are homogeneous polynomials in   $H_{T}^*(\mbox{pt})=\mathbb{Z}[\alpha_1, \cdots, \alpha_n]$ with variables $\alpha_i$ being   simple roots of $G$.
They contain all the  information in      the former kinds of Schubert calculus. For instance, the non-equivariant limit of $N_{u, v}^{w, \mathbf{d}}$, given by evaluating $(\alpha_1,\cdots,  \alpha_n)=\mathbf{0}$,  recovers an ordinary  Gromov-Witten invariant, which counts the number of  degree $\mathbf{d}$ rational curves in $G/P$ passing through three Schubert subvarieties associated to $u, v$, $w$.

When $G=PSL(n+1, \mathbb{C})$, the Weyl group $W$ is a permutation group $S_{n+1}$ generated by transpositions $s_i=(i, i+1)$. Every   homogenous variety   $PSL(n+1, \mathbb{C})/P$ is of the form  $F\ell_{n_1, \cdots, n_k; n+1}:=\{V_{n_1}\leqslant\cdots \leqslant V_{n_k}\leqslant \mathbb{C}^{n+1}~|~ \dim_{\mathbb{C}}V_{n_j}=n_j, \,\,\forall 1\leq j\leq k\}$, parameterizing  partial flags in $\mathbb{C}^{n+1}$.
As an algebra over $H_T^*(\mbox{pt})[q_1, \cdots, q_k]$, the equivariant quantum cohomology ring   $QH^*_T(F\ell_{n_1, \cdots, n_k; n+1})$  is generated (see e.g. \cite{AnCh,LaSh-QDSP}) by  special Schubert classes
$$  \sigma^{c[n_i, p]},\quad\mbox{where}\quad c[n_i, p]:=s_{n_i-p+1}\cdots s_{n_i-1}s_{n_i}.$$
One of the main results of our present paper is the following equivariant quantum Pieri rule. The non-equivariant limit of it recovers the  quantum Pieri rule, which was first given by   Ciocan-Fontanine \cite{CFon}, and was reproved by Buch \cite{Buch-PartialFlag}. The classical limit (by evaluating $\mathbf{q}=\mathbf{0}$) is a slight improvement of Robinson's equivariant Pieri rule \cite{Robinson}. By evaluating all equivariant parameters $\alpha_i$ and all quantum parameters $q_j$ at 0, we have  the classical Pieri rule due to Lascoux and Sch\"utzenberger \cite{LasSch} and  Sottile \cite{Sottile}.
\vspace{0.2cm}

\noindent\textbf{Theorem \ref{thmequiQPR}.}  {\itshape For any $1\leq i\leq k, 1\leq p\leq n_i$ and any $u\in W^P$, we have
{\upshape $$ \sigma^{c[n_i, p]}\star \sigma^u=\sum_{\mathbf{d}\in \scriptsize\mbox{Pie}_{i, p}(u)}  \sum_{j=0}^{p-d_i}  \sum_{w} \xi^{n_i-d_i-j,  p-d_i-j}(\mu_{w\cdot \phi_\mathbf{d}, u\cdot \tau_\mathbf{d}, n_i-d_i})\sigma^w q^{\mathbf{d}}$$
} with the last summation over those {\upshape $w\in \mbox{Per}(\mathbf{d})$} satisfying $w\cdot \phi_{\mathbf{d}}\in S_{n_i-d_i, j}(u\cdot \tau_\mathbf{d})$.

}
\vspace{0.2cm}

\noindent Here   $\mbox{Pie}_{i, p}(u)$, etc., are combinatorial sets to be described    in section \ref{subsecequivFlagAtype};  the element
  $\mu=\mu_{w\cdot \phi_\mathbf{d}, u\cdot \tau_\mathbf{d}, n_i-d_i}$  is a partition inside an $(n_i-d_i-j)\times (n+1-n_i+d_i+j)$ rectangle. Each structure coefficient
 $\xi^{n_i-d_i-j,  p-d_i-j}(\mu)$ coincides with
  the coefficient of $\sigma^\mu$ in the equivariant product $ \sigma^{c[n_i, p]}\circ \sigma^\mu$ in
  $H^*_T(Gr(n_i-d_i-j, n+1))$.
  Geometrically, it is the restriction of the Grassmannian Schubert class $\sigma^{(1^{p-d_i-j})}$ of $H^*_T(Gr(n_i-d_i-j, n+1))$ (labeled by the special partition of $(p-d_i-j)$ copies of 1) to a $T$-fixed point labeled by the partition $\mu$.
 These restriction type structure coefficients  can be easily computed in many ways \cite{KoKu,Billey,BuRi,KnutTao,Laksov,GaSa,Kreiman,ThYo}.
  For completeness, we will include a known formula in Definition \ref{defnKosPolyforCoeff}.
 We remark that our  formula above is different from the one in   \cite{LaSh-affinePieri} by Lam and Shimozono, which concerns  the multiplications by  $\sigma^{s_ps_{p-1}\cdots s_2s_1s_\theta}$ in the ring
  $QH^*_T(F\ell_{1, 2, \cdots, n; n+1})$. Here  $\theta$ denotes the highest root, and a ring isomorphism  \cite{Peterson,LaSh-GoverPaffineGr,LeungLi-GWinv} between  $QH^*_T(F\ell_{1, 2, \cdots, n; n+1})$ and
     the equivariant homology $H_*^T(\Omega SU(n+1))$ of based loop groups after localization  is involved.

In the special case of complex Grassmannians, $H_2(Gr(m, n+1), \mathbb{Z})=\mathbb{Z}$, so that there is only one quantum variable $q$.
 Let $\mathcal{P}_{m, n+1}$ denote the set of partitions inside an $m\times (n+1-m)$ rectangle.
  For   $\nu=(\nu_1, \cdots, \nu_m)$ and $\eta=(\eta_1, \cdots, \eta_m)$ in $\mathcal{P}_{m, n+1}$   satisfying $\eta_i-\nu_i\in\{0, 1\}$ for all $i$, we introduce an associated  partition $\eta_\nu\in\mathcal{P}_{m-r, n+1}$,
    $$\eta_\nu:=(\nu_{j_1}-j_1+r+1,  \nu_{j_2}-j_2+r+2,\cdots, \nu_{j_{m-r}}-j_{m-r}+m),$$
where   $j_1<j_2<\cdots <j_{m-r}$ denote  all  entries with $\eta_{j_i}=\nu_{j_i}$. In other words, the Young diagram of $\eta$ is obtained by adding a vertical strip to the Young diagram of $\nu$; the associated partition $\eta_\nu$ can also described with this flavor by using a simple join-and-cut operation (see Definition \ref{defeg} and the figures therein for more details).  A further simplification of the above theorem leads to  the following  equivariant quantum Pieri rule for complex Grassmannians.

\vspace{0.2cm}

 \noindent\textbf{Theorem \ref{thmQPRforComplexGrassmannian}.}  {\itshape
   Let $1\leq p\leq m$ and  $\nu=(\nu_1, \cdots, \nu_m)\in \mathcal{P}_{m, n+1}$. In $QH^*_T(Gr(m, n+1))$,
     $$\sigma^{1^p}\star \sigma^\nu=  \sum_{r=0}^{p} \sum_{\eta} \xi^{m-r,  p-r}(\eta_\nu)\sigma^\eta+
      \sum_{r=0}^{p-1}  \sum_{\kappa} \xi^{m-1-r,  p-1-r}(\kappa'_{\nu'})\sigma^\kappa q,$$
 where the second sum is over partitions $\eta=(\eta_1, \cdots, \eta_m)\in\mathcal{P}_{m, n+1}$   satisfying $|\eta|=|\nu|+r$ and $\eta_i-\nu_i\in\{0, 1\}$ for all $i$;  the $q$-terms
 occur  only if $\nu_1=n+1-m$, and when this holds,   the last sum is over
    partitions $\kappa=(\kappa_1, \cdots, \kappa_{m-1},0)$ such that
      $\kappa':=(\kappa_1+1, \cdots, \kappa_{m-1}+1)$ and $\nu':=(\nu_2, \cdots, \nu_m)$
        satisfy $|\kappa'|=|\nu'|+r$ and $\kappa_i+1-\nu_{i+1}\in\{0, 1\}$ for all $1\leq i\leq m-1$.
}

\vspace{0.2cm}

\noindent For instance  in $QH^*_T(Gr(3, 7))$, we have
    $$\sigma^{(1,1,1)}\star \sigma^{(4,0,0)} =(\alpha_1+\cdots+\alpha_6)\sigma^{(4, 1,  1)}+q.$$
The special case   when $p=1$ has been given by Mihalcea  \cite{Miha-EQSC}.   The full  Pieri-type formula   could have been obtained by combining the   study of the equivariant quantum K-theory \cite{BuMi-quantumKtheory}  with an equivariant Pieri rule  \cite{Laksov,GaSa} using Grassmannian algebras. It might also  be deduced  by the   results in   \cite{LaSh-GoverPaffineGr,LaSh-affinePieri}. However,   an explicit statement, which is a very important component of the equivariant quantum Schubert calculus, has not been given anywhere else yet. Actually, the equivariant Pieri rule, which is read off from the classical part of Theorem \ref{thmQPRforComplexGrassmannian}, is different from the aforementioned known formulations. It has even inspired the second author and Ravikumar to find an equivariant Pieri rule for Grassmannians of all classical Lie types with a geometric approach in their recent work \cite{LiRa}.   On the other hand, Buch has recently shown  a   (manifestly positive) equivariant puzzle rule for two-step flag varieties \cite{Buch-equivTwostep}, generalizing the rules in \cite{KnutTao, BKPT}. Therefore he obtains a Littlewood-Richardson rule for $QH^*_T(Gr(m, n+1))$, due to the equivariant ``quantum to classical" principle \cite{BuMi-quantumKtheory}. It will be interesting to study how we simplify  Buch's general rule in the special case of Pieri rule to obtain a more compact form as above.

In analogy with the contents (1),(2),(3),(4) of the classical Schubert calculus, there are the corresponding  equivariant quantum extensions, say $(1)', (2)', (3)'$ and $(4)'$,  in the   equivariant quantum Schubert calculus. The problem $(2)'$ of finding a manifestly  positive formula of the structure constants for the equiavariant quantum cohomology remains open except for very few cases including complex Grassmannians  \cite{Buch-equivTwostep} (which is widely open even for the classical Schubert calculus). For $(3)'$ and $(4)'$, there have been a few developments (see   \cite{Kim-EquiQH} and \cite{LaSh-QDSP, AnCh, IMN} respectively).
For $QH^*_T(F\ell_{n_1, \cdots, n_k; n+1})$, we expect that our Theorem \ref{thmequiQPR}  leads to an alternative approach to the earlier studies     on $(3)'$ and  $ (4)'$. Indeed, we will illustrate this  for the special case of  complex Grassmannians \cite{Mihalcea-EQGiambelli}, by using Theorem \ref{thmQPRforComplexGrassmannian}.

It is another one of the main results of the present paper that the ``quantum to classical" principle holds among various equivariant Gromov-Witten invariants of  $G/P$. Applying it for   $G=PSL(n+1, \mathbb{C})$,
we achieve the above theorems.
 In    \cite{LeungLi-functorialproperties,Li-functorial}, Leung and the second author discovered
   a functorial relationship between  the  quantum cohomologies of   complete and partial flag varieties, in terms of  filtered algebraic structures on $QH^*(G/B)$. A ``quantum to classical" principle among various  Gromov-Witten invariants of $G/B$ was therefore obtained \cite{LeungLi-QuantumToClassical}  in a combinatorial way. Such a   principle was also shown  for some other cases
  \cite{BKT-GWinv,CMP-I,ChPe} in a geometric way. It has led to nice applications in finding Pieri-type formulas, such as
  \cite{BKT-Isotropic,LeungLi-QPR}.
  In the present paper, we generalize the work  \cite{LeungLi-functorialproperties} to the equivariant quantum cohomology $QH^*_T(G/B)$ in the following case.
 \vspace{0.2cm}

\noindent\textbf{Theorem \ref{thmfiltration}.}  {\itshape Let $P\supset B$ be a parabolic subgroup of $G$ such that $P/B$ is     isomorphic to the complex projective line $\mathbb{P}^1$.
With respect to the natural projection  $G/B\to G/P$,    there is a   $\mathbb{Z}^2$-filtration $\mathcal{F}=\{F_{\mathbf{a}}\}$ on $QH^*_T(G/B)$ respecting the algebra structure: $F_{\mathbf{a}}\star F_{\mathbf{b}}\subset F_{\mathbf{a}+\mathbf{b}}$ for any $\mathbf{a}, \mathbf{b}\in \mathbb{Z}^2$.
 Moreover, the associated
graded algebra $Gr^{\mathcal{F}}(QH^*_T(G/B))$ is isomorphic to  $QH^*(P/B)\otimes_{\mathbb{Z}} QH^*_T(G/P)$ as  $\mathbb{Z}^2$-graded $\mathbb{Z}$-algebras, after localization.}

\vspace{0.2cm}

\noindent Explicit construction of the $\mathbb{Z}^2$-filtration will be given in section 2.2.
 As a consequence, we obtain \textbf{Theorem \ref{thmforQtoC}},  giving identities among  various  equivariant Gromov-Witten invariants of $G/B$. This is
 an   extension of   Theorem 1.1  \cite{LeungLi-QuantumToClassical} in exactly  the same form.
 Together with an equivariant extension  of Peterson-Woodward comparison formula (see Proposition \ref{propcomparison}), we expect that
 Theorem \ref{thmforQtoC} leads to nice applications in the equivariant quantum Schubert calculus  for various  $G/P$.
 Indeed, for  $G=PSL(n+1, \mathbb{C})$,   we  can  reduce all the relevant
 equivariant Gromov-Witten invariants in Theorem \ref{thmequiQPR} to   Pieri-type structure coefficients for  $H^*_T(F\ell_{1, 2, \cdots, n; n+1})$, by
 applying    Theorem \ref{thmforQtoC} repeatedly. Combining such reductions with Robinson's equivariant Pieri rule  \cite{Robinson}, we
   obtain Theorem \ref{thmequiQPR}. It will be very interesting to explore a simpler and conceptually much cleaner  proof  by a kind of inductive argument  
based on    Mihalcea's characterization of the structure coefficients  via the equivariant quantum Chevalley rule \cite{Miha-EQCRandCri}.  
   We can also find nice applications on  the equivariant quantum Pieri rules for orthogonal isotropic  Grassmannians, in a joint work in progress by   the second author and Ravikumar.

   This paper is organized as follows. In section 2, we introduce basic notations, and prove the main technical results for homogeneous varieties $G/P$ of general Lie type.
In  section 3, we show an equivariant quantum Pieri rule for $F\ell_{n_1, \cdots, n_k; n+1}$, and give a simplification in the special case of  complex Grassmannians.
Finally in the appendix, we give  alternative proofs of a ring presentation of $QH^*_T(Gr(m, n+1))$ and the equivariant quantum Giambelli formula for $Gr(m, n+1)$.

\subsection*{Acknowledgements}

The authors thank Leonardo Constantin Mihalcea for his generous helps. The authors also  thank Anders Skovsted Buch, Ionut Ciocan-Fontanine, Thomas Lam, and Naichung Conan Leung     for helpful conversations. The authors would like to thank the anonymous referees for their valuable comments on an earlier version of the manuscript. The second author is supported by IBS-R003-D1.

\section{A $\mathbb{Z}^2$-filtration on $QH^*_T(G/B)$ and its consequences}
In this section, we show a $\mathbb{Z}^2$-filtered algebraic structure on $QH^*_T(G/B)$, with respect to a choice of a simple root. We also obtain    a number of identities among various  equivariant Gromov-Witten invariants of $G/B$.

\subsection{Preliminaries} We fix the notions here, following  \cite{Hump-LieAlg,Hump-AlgGroup,FuPa}.

Let $\mathfrak{g}$ be   a     complex simple Lie algebra of rank $n$,   $\mathfrak{h}$ be a    Cartan subalgebra of $\mathfrak{g}$, and  $\Delta=\{\alpha_1, \cdots, \alpha_n\}\subset \mathfrak{h}^*$ be a basis of simple roots. Let   $R$   denote the root system of $(\mathfrak{g}, \mathfrak{h})$. We have $R=R^+\sqcup (-R^+)$ with
                       $R^+=R\cap \bigoplus_{i=1}^n{\mathbb{Z}_{\geq 0}}\alpha_i$ called the set of positive roots, and have the Cartan decomposition $\mathfrak{g}=\mathfrak{h}\bigoplus \big(\bigoplus_{\gamma\in R}\mathfrak{g}_{\gamma}\big)$. Let $G$ be the (connected) adjoint group of $\mathfrak{g}$, and $B\subset G$ be the Borel subgroup with $\mathfrak{b}:=\mbox{Lie}(B)=\mathfrak{h}\bigoplus(\bigoplus_{\gamma\in R^+}\mathfrak{g}_\gamma\big)$.
  Each subset $\Delta'$    of $\Delta$ gives a root subsystem    $R_{\Delta'}=R^+_{\Delta}\sqcup (-R^+_{\Delta'})$ where     $R_{\Delta'}^{+}=R^{+}\bigcap \big(\bigoplus_{\alpha\in \Delta'}\mathbb{Z} \alpha\big)$, and defines a parabolic subalgebra by $\mathfrak{p}(\Delta'):=\mathfrak{b}\bigoplus\big(\bigoplus_{\gamma\in -R^+_{\Delta'}} \mathfrak{g}_{\gamma}\big)$. This gives rise  to a  one-to-one correspondence between the subsets $\Delta_P$ of $\Delta$ and the
  parabolic subgroups $P\subset G$ that contain $B$.
   In particular, we denote by $P_\beta$ the   parabolic subgroup corresponding to a subset $\{\beta\}\subset \Delta$.   We notice that $P_\beta$ is a minimal subgroup among those parabolic subgroups $P\supsetneq B$, and $P_{\beta}/B$ is isomorphic to the complex projective line $\mathbb{P}^1$.

Let    $\{\alpha_1^\vee, \cdots, \alpha_n^\vee\}\subset\mathfrak{h}$ be the simple coroots,  $\{\chi_1^\vee, \cdots, \chi_n^\vee\}\subset \mathfrak{h}$ be the fundamental coweights,    $\{\chi_1, \cdots, \chi_n\}\subset \mathfrak{h}^*$ be the
       fundamental weights,   and $\rho:=\sum_{i=1}^n\chi_i$.
 Let    $\langle\cdot, \cdot\rangle
                    :\mathfrak{h}^*\times\mathfrak{h}\rightarrow \mathbb{C}$ denote the natural pairing.
        Every simple root $\alpha_i$ labels   a    simple reflection  $s_i:=s_{\alpha_i}$,  which     maps  $\lambda\in\mathfrak{h}$ and $\gamma\in\mathfrak{h}^*$ to
        $s_{i}(\lambda)=\lambda-\langle \alpha_i, \lambda\rangle\alpha_i^\vee$ and $s_{i}(\gamma)=\gamma-\langle \gamma, \alpha_i^\vee\rangle\alpha_i$ respectively.
Let $W$ denote   the Weyl group  generated by  all the
                     simple reflections,  and     $W_{P}$ denote
     the subgroup of $W$ generated by $\{s_\alpha~|~ \alpha \in \Delta_{P}\}$.
Let  $\ell: W\rightarrow \mathbb{Z}_{\geq 0}$ denote   the
standard length function,    $\omega$ (resp. $\omega_P$) denote  the longest element in $W$ (resp. $W_P$),
      and $W^P$ denote the  subset of $W$ that consists of    minimal length  representatives  of the cosets in $W/W_P$.
   Denote $Q^\vee:=\bigoplus_{i=1}^n\mathbb{Z}\alpha_i^\vee$  and  $Q^\vee_P:=\bigoplus_{\alpha_i\in \Delta_P}\mathbb{Z}\alpha_i^\vee$.
 Every $\gamma\in R$ is given by    $\gamma=w(\alpha_i)$ for some $(w, \alpha_i)\in W\times \Delta$, then the coroot $\gamma^\vee:=w(\alpha_i^\vee)\in Q^\vee$  and the reflection
          $s_\gamma:=ws_iw^{-1}\in W$ are both  independent of   the expressions of
           $\gamma$.

Let $T$ be the maximal complex torus of $G$ with $\mathfrak{h}=\mbox{Lie}(T)$, and $N(T)$ denote the normalizer of $T$ in $G$.  There is a canonical isomorphism $W\overset{\cong}{\rightarrow} N(T)/T$ by $w\mapsto \dot w T$. We then have a Bruhat decomposition of
 the   homogeneous variety $G/P$, given by $G/P=\bigsqcup_{w\in W^P} B^-\dot w P/P$, where $B^-$ denotes the opposite Borel subgroup, and each cell $B^-\dot w P/P$ is isomorphic to
  $\mathbb{C}^{\dim_{\mathbb{C}} G/P - \ell(w)}$. As a consequence,
the integral (co)homology of the homogeneous variety $G/P$ has an additive $\mathbb{Z}$-basis of Schubert (co)homology classes $\sigma_w$  (resp. $\sigma^w$) of (co)homology degree
$2\ell(w)$,  indexed by $w\in W^P$. Here $\sigma^w=\mbox{P.D.}([X^w])$ is the Poincar\'e dual of the fundamental class of the Schubert subvariety $X^w:=\overline{B^-\dot w P/P}\subset G/P$, and   $\sigma_w$ is the    fundamental class of the Schubert subvariety $X_w:=\overline{B\dot w P/P}\subset G/P$.

     We consider the integral $T$-equivariant cohomology $H^*_T(G/P)$ with respect to   the natural (left) $T$-action on $G/P$. Every Schubert subvariety $X^w$ is $T$-invariant and of  complex codimension $\ell(w)$, and hence determines an equivariant cohomology class in $H^{2\ell(w)}_T(X)$, which we still denote as $\sigma^w$ by abuse of notations. The equivariant cohomology $H^*_T(G/P)$ is an $H^*_T(\mbox{pt})$-module with an $H^*_T(\mbox{pt})$-basis of the equivariant Schubert classes $\sigma^w$. Here  $H^*_T(\mbox{pt})$, denoting   the $T$-equivariant cohomology of a point equipped with the trivial $T$-action,   is isomorphic to  the symmetric
     algebra of the character group of $T$. Since $G$ is adjoint, we have
      $S:=H^*_T(\mbox{pt})=\mathbb{Z}[\alpha_1, \cdots, \alpha_n]$.

The second integral homology   $H_2(G/P,\mathbb{Z})$ has a basis of   Schubert curve classes $\{\sigma_{s_{\alpha}}\}_{\alpha\in \Delta\setminus\Delta_P}$.
  Therefore, it can be
          identified  with  $Q^\vee/Q^\vee_P$,            by  $\sum\limits_{\alpha_j\in \Delta\setminus\Delta_P}a_j\sigma_{s_{\alpha_j}} \mapsto \lambda_P=\sum\limits_{\alpha_j\in \Delta\setminus\Delta_P}a_j\alpha_j^\vee+Q^\vee_P$. We call $\lambda_P$ effective, if all $a_j$ are nonnegative integers, i.e., if the   associated   function     $q_{\lambda_P}:=\prod\limits_{\alpha_j\in \Delta\setminus\Delta_P}q_{\alpha_j^\vee+Q^\vee_P}^{a_j}$     is a  monomial in the polynomial ring $\mathbb{Z}[\mathbf{q}]$ of
          indeterminate  variables $q_{\alpha_j^\vee+Q^\vee_P}$.
       The integral (\textit{small}) \textit{quantum cohomology ring}   $QH^*(G/P)=(H^*(G/P)\otimes\mathbb{Z}[\mathbf{q}],  \bullet_P)$ of $G/P$
      is a deformation of the ring structure of $H^*(G/P)$. The quantum multiplication is defined by incorporating  genus zero, three-point Gromov-Witten invariants, i.e., intersection numbers on   the moduli spaces of stable maps $\overline{\mathcal{M}}_{0, 3}(G/P, \mathbf{d})$, with respect to three classes pull-back from $H^*(G/P)$ via the natural evaluation maps.
           The moduli space   $\overline{\mathcal{M}}_{0, 3}(G/P, \mathbf{d})$ admits a natural $T$-action induced from the one on the target space $G/P$, and the evaluation maps  are all $T$-equivariant.
        The so-called $T$-equivariant Gromov-Witten invariants  are polynomials in   $S$,    defined   by pulling back classes in $H^*_T(G/P)$ to $H^*_T\big(\overline{\mathcal{M}}_{0, 3}(G/P, \mathbf{d})\big)$ and taking integration over the moduli space with  the equivariant Gysin push forward map \cite{Kim-EquiQH}. The Schubert classes $\sigma^u$ form an $S[\mathbf{q}]$-basis of the commutative \textit{$T$-equivariant quantum cohomology ring} $QH^*_T(G/P)$.
       The   structure coefficients $N_{u, v}^{w, \lambda_P}$ in the equivariant quantum product, $$\sigma^u\star_P \sigma^v =\sum_{w\in W^P, \lambda_P\in Q^\vee/Q^\vee_P}    N_{u,v}^{w, \lambda_P}\sigma^wq_{\lambda_P},$$
     are homogenous polynomials in $S$. The classical limit $N_{u, v}^{w, \mathbf{0}}$ coincides with the coefficient of $\sigma^{w}$ in the equivariant  product $\sigma^u\circ \sigma^v$ in $H_T^*(G/P)$.
     The non-equivariant limit $N_{u, v}^{w, \lambda_P}\big|_{\alpha_1=\cdots=\alpha_n=0}$ is a  Gromov-Witten invariant, coinciding with
      the coefficient
        of $\sigma^wq_{\lambda_P}$ in the quantum product $\sigma^u\bullet_P\sigma^v$ in $QH^*(G/P)$.

   There is an    equivariant quantum  Chevalley formula stated by Peterson  \cite{Peterson}  and   proved by Mihalcea  \cite{Miha-EQCRandCri}, which concerns     the multiplication by    Schubert divisor classes in $QH^*_T(G/P)$.  We review    the special case of it when $P=B$ as follows. In this case, we notice that
    $Q^\vee_B=0$, $W_B=\{1\}$ and $W^B=W$. Hence  we will simply denote 
       $\lambda:=\lambda_B$ and $q_j:=q_{\alpha_j^\vee}$,  whenever there is no confusion.

 \begin{prop}[Equivariant quantum Chevalley formula for $G/B$]\label{propQChevalley}
 For   any simple reflection $s_i$ and any $u$ in $W$,    in $QH_T^*(G/B)$, we have
    $$\sigma^{s_i}\star \sigma^u=(\chi_i-u(\chi_i))\sigma^u+ \sum \langle \chi_i, \gamma^\vee \rangle \sigma^{us_\gamma}+
             \sum \langle\chi_i, \gamma^\vee \rangle q_{\gamma^\vee}\sigma^{us_\gamma},
              $$
 the first summation over those $\gamma\in R^+$ satisfying $\ell(us_\gamma)=\ell(u)+1$, and the second summation over those $\gamma\in R^+$ satisfying  $\ell(us_\gamma)=\ell(u)+1-\langle2\rho, \gamma^\vee\rangle$.

\end{prop}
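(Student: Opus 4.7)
The plan is to split the computation into the classical ($\lambda = 0$) and quantum ($\lambda \neq 0$) parts and handle each with standard tools. Writing
$$\sigma^{s_i}\star \sigma^u = \sum_{w \in W,\ \lambda \in Q^\vee} N_{s_i,u}^{w,\lambda}\,\sigma^w q_\lambda,$$
degree considerations show that $N_{s_i,u}^{w,\lambda}\in S$ is homogeneous of cohomological degree $2(\ell(w)+\langle 2\rho,\lambda\rangle-\ell(u)-1)$. Hence for $\lambda=0$ only $w=u$ (coefficient of degree $2$) and those $w$ with $\ell(w)=\ell(u)+1$ (coefficient of degree $0$, i.e. an integer) can contribute, while the quantum terms require $\ell(w)=\ell(u)+1-\langle 2\rho,\lambda\rangle\geq 0$.

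For the classical part $\lambda=0$, the coefficient $N_{s_i,u}^{w,0}$ coincides with the structure constant of $\sigma^{s_i}\circ\sigma^u$ in $H^*_T(G/B)$. Here I would invoke the Kostant--Kumar equivariant Chevalley formula, which one can verify self-consistently by applying the equivariant localization isomorphism $H^*_T(G/B)\hookrightarrow \bigoplus_{v\in W} S$ and checking the identity at each $T$-fixed point $vB$: the restriction of the left-hand side is $(\chi_i-v(\chi_i))\,\sigma^u|_v$, and the restriction of the right-hand side, computed using Kostant--Kumar's combinatorial formula for $\sigma^u|_v$, matches via the elementary identity expressing $\chi_i-v(\chi_i)$ as a sum of roots over a suitable inversion set of $v$.

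For $\lambda\neq 0$, the coefficient is the equivariant three-point Gromov--Witten invariant $\langle \sigma^{s_i},\sigma^u,\sigma^{w^\vee}\rangle^T_\lambda$, where $\sigma^{w^\vee}$ denotes an equivariant Poincar\'e dual class of $\sigma^w$. Since $\sigma^{s_i}$ has equivariant cohomological degree two and its non-equivariant limit is a divisor, the equivariant divisor axiom applied via the forgetful map $\overline{\mathcal{M}}_{0,3}(G/B,\lambda)\to\overline{\mathcal{M}}_{0,2}(G/B,\lambda)$ yields
$$N_{s_i,u}^{w,\lambda}=\langle\chi_i,\lambda\rangle\cdot\langle\sigma^u,\sigma^{w^\vee}\rangle^T_\lambda.$$
A dimension count forces the remaining two-point invariant to live in equivariant degree zero, hence to agree with its non-equivariant limit. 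Invoking the classification of nonzero two-point invariants on $G/B$ due to Fulton--Woodward finishes the proof: the invariant equals $1$ precisely when $\lambda=\gamma^\vee$ for some $\gamma\in R^+$ and $w=us_\gamma$ with $\ell(us_\gamma)=\ell(u)+1-\langle 2\rho,\gamma^\vee\rangle$.

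The main obstacle is carefully justifying the equivariant divisor axiom on $\overline{\mathcal{M}}_{0,n}(G/B,\lambda)$: one needs the forgetful map to be $T$-equivariant and the equivariant virtual fundamental class to push forward compatibly. I would either appeal to Kim's framework for equivariant Gromov--Witten theory to handle this directly, or sidestep the geometric subtleties entirely by combining Mihalcea's axiomatic uniqueness for $QH^*_T(G/B)$ with the previously established non-equivariant quantum Chevalley formula of Peterson and Fulton--Woodward, thereby reducing the problem to matching coefficients on each side of the asserted identity.
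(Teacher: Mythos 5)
The paper does not actually prove this proposition: it records it as a known result, stated by Peterson \cite{Peterson} and proved by Mihalcea \cite{Miha-EQCRandCri}, so there is no in-paper argument to compare against. I will therefore assess your sketch on its own terms.

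The classical part of your sketch is fine, and the equivariant divisor axiom can indeed be justified in Kim's framework, giving $N_{s_i,u}^{w,\lambda}=\langle\chi_i,\lambda\rangle\,\langle\sigma^u,\sigma_w\rangle^T_\lambda$ for $\lambda\neq 0$. The genuine gap is the next sentence. You assert that ``a dimension count forces the remaining two-point invariant to live in equivariant degree zero,'' but this is not true. The degree of $N_{s_i,u}^{w,\lambda}$ (and hence of the two-point invariant after dividing by the integer $\langle\chi_i,\lambda\rangle$) is $d=1+\ell(u)-\ell(w)-\langle 2\rho,\lambda\rangle$, and by Proposition \ref{propPositivity} this is only required to be nonnegative; $d>0$ is a priori possible. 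Fulton--Woodward's classification is non-equivariant and therefore controls only the $d=0$ case (equivalently, the non-equivariant limit $\alpha_i=0$). A coefficient with $d>0$ lies in $\mathbb{Z}_{\geq 0}[\alpha_1,\dots,\alpha_n]$ and automatically vanishes in the non-equivariant limit, so Fulton--Woodward say nothing about whether such a coefficient vanishes as a polynomial. Showing that \emph{all} positive-degree quantum Chevalley coefficients vanish is exactly the nontrivial content of the proposition, and your argument does not supply it. This is what Mihalcea's proof establishes, via an algebraic uniqueness/recursion characterization of $QH^*_T(G/B)$ rather than the divisor axiom. Your final fallback --- invoking Mihalcea's axiomatic characterization and matching coefficients --- is the correct way to close the gap, and coincides with the paper's own treatment of the statement as a cited result.
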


   Despite of the lack of   geometric meaning, the structure coefficients   $N_{u, v}^{w, \lambda_P}$ for $QH^*_T(G/P)$   enjoy a positivity property \cite{Miha-positivity}. Here is a precise statement for  $P=B$.
\begin{prop}[Positivity]\label{propPositivity}
 Let $u, v, w\in W$, $\lambda\in Q^\vee$, and $d:=\ell(u)+\ell(v)-\ell(w)-\langle 2\rho, \lambda\rangle$.  The structure coefficient
     $N_{u, v}^{w, \lambda}$ for  $QH^*_T(G/B)$ is a homogeneous polynomial of degree $d$ in $\mathbb{Z}_{\geq 0}[\alpha_1, \cdots, \alpha_n]$, provided that
     $\lambda$ is effective and $d\geq 0$, and zero otherwise.
\end{prop}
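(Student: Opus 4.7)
The statement has three distinct assertions: (a) the coefficient $N_{u,v}^{w,\lambda}$ is homogeneous of degree $d$ in $S=\mathbb{Z}[\alpha_1,\dots,\alpha_n]$, (b) it vanishes when $d<0$ or when $\lambda$ is not effective, and (c) when $\lambda$ is effective and $d\ge 0$ the coefficients lie in $\mathbb{Z}_{\ge 0}[\alpha_1,\dots,\alpha_n]$. The plan is to dispatch (a) and (b) by an elementary grading argument, and to reduce the real content (c) to the positivity theorem of Mihalcea \cite{Miha-positivity}.

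For (a) and (b): equip $S[\mathbf{q}]$ with the standard bigrading $\deg\alpha_i=2$, $\deg q_{\alpha_j^\vee}=2\langle 2\rho,\alpha_j^\vee\rangle$ (the latter coming from pairing with the anticanonical class of $G/B$), and grade $\sigma^w$ by $2\ell(w)$. Since equivariant quantum multiplication preserves this grading, matching total degrees on both sides of $\sigma^u\star\sigma^v=\sum N_{u,v}^{w,\lambda}\sigma^w q_\lambda$ forces $N_{u,v}^{w,\lambda}$ to be homogeneous of degree $2d$ in $\alpha_1,\dots,\alpha_n$. If $d<0$ there is no such nonzero polynomial, so $N_{u,v}^{w,\lambda}=0$; and if $\lambda$ is not effective, then $q_\lambda$ is not a monomial in $\mathbb{Z}[\mathbf{q}]$ and so it does not occur in the expansion of any product, whence the coefficient is zero by definition.

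For (c): I would invoke Mihalcea's theorem directly. His proof realizes the equivariant Gromov--Witten invariants as intersection numbers on a suitable $T$-variety (obtained from the moduli of stable maps or an associated curve-neighborhood construction) where Graham's positivity theorem in ordinary equivariant cohomology yields coefficients in the nonnegative cone $\mathbb{Z}_{\ge 0}[\alpha_1,\dots,\alpha_n]$.

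The main obstacle, and the reason (c) is genuinely deeper than (a) and (b), is that the most natural attempted proof --- induction on $\ell(u)$ using the equivariant quantum Chevalley formula (Proposition \ref{propQChevalley}), whose individual summands are manifestly nonnegative (note that $\chi_i-u(\chi_i)\in\bigoplus_j\mathbb{Z}_{\ge 0}\alpha_j$ for any $u\in W$, since $\chi_i$ is a dominant weight) --- does not go through: associativity only expresses $\sigma^u\star\sigma^v$ as an algebraic combination of iterated products of the form $\sigma^{s_i}\star(\cdots)$ with both positive and negative contributions, so the required cancellations could a priori introduce negative monomials. It is precisely this cancellation issue that Mihalcea's geometric argument bypasses, and which we have no way of handling by the tools developed in this section; accordingly we simply cite the result.
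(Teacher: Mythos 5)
Your proposal is correct and takes essentially the same approach as the paper: the paper states Proposition~\ref{propPositivity} with no proof, simply citing \cite{Miha-positivity}, and you do the same for the substantive positivity claim while supplying the (correct and elementary) degree-counting argument for homogeneity and the vanishing conditions. One small factual correction worth flagging: your description of Mihalcea's argument as a geometric one that reduces to Graham positivity on a moduli or curve-neighborhood space is not how \cite{Miha-positivity} actually proceeds. Mihalcea's proof is algebraic: it is a delicate induction built on the equivariant quantum Chevalley formula together with associativity, using a carefully chosen order on the indexing data $(w,\lambda)$ so that the potentially negative terms can be controlled, precisely the cancellation phenomenon you identify as the obstruction to the naive induction. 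Since you are only citing the result rather than reproducing its proof, this does not affect the validity of your write-up, but the characterization of the cited argument should be corrected.
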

\noindent We remark that the structure coefficients for equivariant (quantum) product of the equivariant (quantum) Schubert classes determined by the $T$-invariant Schubert varieties $X_{\omega_0w}$ enjoy the Graham-positivity \cite{Grah, Miha-positivity}, i.e., they take values in $\mathbb{Z}_{\geq 0}[-\alpha_1, \cdots, -\alpha_n]$ instead.

\subsection{Main results}\label{subsetmainthms}

Let $\beta\in \Delta$. The natural projection
 $G/B\rightarrow G/P_\beta$ is a  bundle  with fiber  $P_\beta/B\cong \mathbb{P}^1$.
 As in \cite{LeungLi-QuantumToClassical}, we define a map $\mbox{sgn}_\beta:  W\rightarrow \{0, 1\}$  by $\mbox{sgn}_\beta(w)=1$ if $w(\beta)\in-R^+$, or 0 otherwise. In other words, we have
         $$\mbox{sgn}_\beta(w)=\begin{cases}
             1, & \mbox{if } \ell(w)-\ell(ws_\beta)>0\\
             0, &\mbox{if } \ell(w)-\ell(ws_\beta)\leq 0
         \end{cases}.
$$

For $I=(i_1, \cdots, i_n)\in \mathbb{Z}^n$, we denote   $|I|:=i_1+\cdots+i_n$ and $\alpha^I:=\alpha_1^{i_1}\cdots \alpha_n^{i_n}$

\begin{defn}\label{defgradingequiv} With respect to $\beta\in \Delta$, we define a map          $gr_\beta:  W\times \mathbb{Z}^n \times Q^\vee\longrightarrow \mathbb{Z}^2,$
 {\upshape     \begin{align*}
                  &\,  gr_\beta(w, I,  \lambda):=(\mbox{sgn}_\beta(w)+\langle \beta, \lambda\rangle, \ell(w)+|I|+\langle 2\rho, \lambda\rangle-\mbox{sgn}_\beta(w)-\langle \beta, \lambda\rangle).
     \end{align*}
}
\end{defn}

 The equivariant quantum cohomology ring $QH^*_T(G/B)$ admits a  $\mathbb{Z}$-basis  $\sigma^{w}\alpha^Iq_\lambda$,
 with $w\in W$  and $\alpha^Iq_\lambda\in \mathbb{Z}[\mathbf{\alpha}, \mathbf{q}]$.
Naturally, we define the  grading of  $\sigma^w\alpha^Iq_\lambda$ to be $gr_\beta (w, I,  \lambda)$.
Therefore,    we obtain a     family   
           of $\mathbb{Z}$-vector subspaces of $QH^*_T(G/B)$ by

     $$  \mathcal{F}:=\{F_{\mathbf{a}}\}_{\mathbf{a}\in \mathbb{Z}^2}\quad \mbox{ with }\quad F_{\mathbf{a}}:=\bigoplus\limits_{gr_\beta({w},I,  \lambda)\leq \mathbf{a}}\mathbb{Z}\sigma^w\alpha^Iq_\lambda\subset QH^*_T(G/B).$$
  Here we are considering  the \textit{lexicographical order} on $\mathbb{Z}^2$. That is, $(a_1, a_2)<(b_1, b_2)$ if and only if either $a_1<b_1$ or $(a_1=b_1$ and $a_2<b_2)$.
  The   associated $\mathbb{Z}^2$-graded vector space
  with respect to $\mathcal{F}$ is then given by
   $$Gr^{\mathcal{F}}(QH^*_T(G/B))=\bigoplus_{\mathbf{a}\in \mathbb{Z}^2} Gr_\mathbf{a}^{\mathcal{F}} \quad \mbox{ where } \quad Gr_{\mathbf{a}}^{\mathcal{F}}:=F_{\mathbf{a}}\big/\cup_{\mathbf{b}<\mathbf{a}}F_{\mathbf{b}}.$$

\begin{lem-defn}[Lemma 1 of \cite{Woodward}]\label{lemmalifting}
     Let $\lambda_P\in Q^\vee/Q_P^\vee$. Then there is a unique $\lambda_B\in Q^\vee$ such that $\lambda_P=\lambda_B+Q_P^\vee$ and
                $\langle \gamma, \lambda_B\rangle  \in \{0, -1\}$ for all $\gamma\in R^+_P$.
                We call $\lambda_B$    the  \textbf{Peterson-Woodward lifting} of $\lambda_P$.
\end{lem-defn}
\noindent Thanks to the above lemma, we obtain an  injective morphism   of
${S}$-modules   $$\psi_{\Delta, \Delta_P}: QH^*_T(G/P)\longrightarrow QH^*_T(G/B) $$
       {defined by} $ \sigma_P^wq_{\lambda_P}\mapsto \sigma_B^{w\omega_{P}\omega_{P'}}q_{\lambda_B} $. Here $\omega_{P'}$ denotes the longest element in the Weyl subgroup generated by the simple reflections $\{s_\alpha~|~ \alpha\in \Delta_P, \langle \alpha, \lambda_B\rangle=0\}$, and the subscript ``$P$" in the Schubert classes $\sigma^w_P$ for $G/P$ is used in order to distinguish them from those Schubert classes for $G/B$. In the special case when  $P=P_{\beta}$, we simply denote
        $\psi_\beta:=\psi_{\Delta, \{\beta\}}.$

Our first main result is the next theorem, giving  an equivariant    generalization of  the special case of Theorems 1.2 and 1.4 of \cite{LeungLi-functorialproperties} when $P=P_\beta$.
We take an isomorphism   $QH^*(\mathbb{P}^1)\cong {\mathbb{Z}[x, t]\over \langle x^{2}-t\rangle }$ of algebras.
\begin{thm}\label{thmfiltration}
 The filtration   $\mathcal{F}$ gives a   $\mathbb{Z}^2$-filtered algebraic structure on    $QH^*_T(G/B)$.
     That is, we have $F_\mathbf{a}\star F_\mathbf{b}\subset F_{\mathbf{a}+\mathbf{b}}$ for any $\mathbf{a}, \mathbf{b}\in \mathbb{Z}^2$.
            {\upshape $$\begin{array}{cr}
   \mbox{{\itshape The map}}\quad   \Psi_{\scriptsize\mbox{ver}}^\beta:&  QH^*(\mathbb{P}^1) \longrightarrow  Gr_{\scriptsize\mbox{ver}}^{\mathcal{F}}:=\bigoplus\limits_{i\in \mathbb{Z}}Gr_{(i, 0)}^{\mathcal{F}}\subset Gr^{\mathcal{F}}(QH^*_T(G/B)),\quad{}
       \end{array}$$}
defined by $x\mapsto \overline{\sigma^{s_\beta}}$ and $t\mapsto \overline{q_{\beta^\vee}}$, is an isomorphism  of   $\mathbb{Z}$-algebras.
     {\upshape $$  \mbox{{\itshape The map}}\quad  \Psi_{\scriptsize\mbox{hor}}^\beta:   QH^*_T(G/P_\beta) \longrightarrow   Gr_{\scriptsize\mbox{hor}}^{\mathcal{F}}:=\bigoplus\limits_{j\in \mathbb{Z}}Gr_{(0, j)}^{\mathcal{F}}\subset Gr^{\mathcal{F}}(QH^*_T(G/B)),$$}
defined by  $\sigma^w\alpha^Iq_{\lambda_{P_\beta}} \mapsto  \overline{\psi_{\beta}(\sigma^w\alpha^Iq_{\lambda_{P_\beta}})}$,
is an  isomorphism  of   $S$-algebras.
\end{thm}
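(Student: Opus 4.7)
The plan is to establish the three claims in order: first, the filtered algebra property $F_\mathbf{a}\star F_\mathbf{b}\subseteq F_{\mathbf{a}+\mathbf{b}}$; second, the vertical isomorphism; and third, the horizontal one.

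For the multiplicativity of the filtration, since each $F_\mathbf{a}$ is $\mathbb{Z}$-spanned by monomials $\sigma^w\alpha^I q_\lambda$ and multiplication by scalars $\alpha^I q_\lambda$ shifts the grading by $gr_\beta(e,I,\lambda)$ in an additive way, it suffices to verify $\sigma^u\star\sigma^v\in F_{gr_\beta(u,0,0)+gr_\beta(v,0,0)}$ for all $u,v\in W$. I would induct on $\ell(u)$, handling the base case $u=s_i$ by direct expansion via the equivariant quantum Chevalley formula (Proposition \ref{propQChevalley}). Each of the three summands is checked separately: the equivariant correction $(\chi_i-v(\chi_i))\sigma^v$ is shifted by $(0,1)$ relative to $gr_\beta(v)$, which is lex-dominated by $gr_\beta(s_i)+gr_\beta(v)$ since $gr_\beta(s_i)\in\{(1,0),(0,1)\}$; the classical contribution $\sigma^{vs_\gamma}$ with $\ell(vs_\gamma)=\ell(v)+1$ reduces to checking $\mathrm{sgn}_\beta(vs_\gamma)\leq\mathrm{sgn}_\beta(v)+\mathrm{sgn}_\beta(s_i)$, which is a consequence of pullback compatibility under $G/B\to G/P_\beta$; and the quantum contribution $q_{\gamma^\vee}\sigma^{vs_\gamma}$ requires $\mathrm{sgn}_\beta(vs_\gamma)+\langle\beta,\gamma^\vee\rangle\leq\mathrm{sgn}_\beta(s_i)+\mathrm{sgn}_\beta(v)$, exploiting the bound $\langle\beta,\gamma^\vee\rangle\leq\langle 2\rho,\gamma^\vee\rangle$ (with equality only at $\gamma=\beta$) together with the Chevalley length-drop $\ell(vs_\gamma)=\ell(v)+1-\langle 2\rho,\gamma^\vee\rangle$. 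For the inductive step $\ell(u)\geq 1$, write $u=s_iu'$ with $\ell(u')=\ell(u)-1$, solve the Chevalley identity for $\sigma^u$ in terms of $\sigma^{s_i}\star\sigma^{u'}$ plus terms of strictly smaller Chevalley complexity, and apply associativity together with the inductive hypothesis.

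For the vertical isomorphism, observe that $\sigma^w\alpha^I q_\lambda$ represents a nonzero class in $Gr^{\mathcal{F}}_{(i,0)}$ iff its second grading-coordinate vanishes, i.e., $\ell(w)-\mathrm{sgn}_\beta(w)+|I|+\langle 2\rho-\beta,\lambda\rangle=0$. Effectivity of $\lambda$ together with $\langle 2\rho-\beta,\alpha_j^\vee\rangle=2(1-\delta_{j\beta})\geq 0$ forces each summand to vanish, so $\lambda=n\beta^\vee$ with $n\geq 0$, $|I|=0$, and $w\in\{e,s_\beta\}$. Hence $Gr^{\mathcal{F}}_{\mathrm{ver}}$ has $\mathbb{Z}$-basis matching $\{x^\epsilon t^n\}$ in $\mathbb{Z}[x,t]/(x^2-t)$, and the crucial relation $\overline{\sigma^{s_\beta}}^{\,2}=\overline{q_{\beta^\vee}}$ is read off from the Chevalley expansion of $\sigma^{s_\beta}\star\sigma^{s_\beta}$: the unique term of grading exactly $(2,0)$ is the quantum summand $q_{\beta^\vee}$ at $\gamma=\beta$, while all remaining summands lie strictly lower in the lex order.

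For the horizontal isomorphism, an element of grading $(0,j)$ has $\mathrm{sgn}_\beta(w)+\langle\beta,\lambda\rangle=0$, which splits into the two cases $(\mathrm{sgn}_\beta(w),\langle\beta,\lambda\rangle)=(0,0)$ and $(1,-1)$. These correspond exactly to the two possible values $\langle\beta,\lambda_B\rangle\in\{0,-1\}$ in the Peterson-Woodward lift (Lemma/Definition \ref{lemmalifting}) for $P=P_\beta$ (where $R^+_{P_\beta}=\{\beta\}$), with the $(1,-1)$ case obtained by multiplying $w$ by $s_\beta$; this matches the image of $\psi_\beta$ and yields the $S$-module isomorphism. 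Multiplicativity then follows by generation in Schubert divisor classes and a term-by-term comparison of the equivariant quantum Chevalley formulas on $G/B$ and $G/P_\beta$, with the discrepant terms shown to lie in strictly lower filtration. The main obstacle is the Chevalley grading bound in Step 1 for the quantum term with a non-simple $\gamma$: one must synthesize the length constraint, the structural inequality on $\langle\beta,\gamma^\vee\rangle$, and sign-function analysis to close the lexicographic estimate in all cases, most delicately in type $G_2$ where $\langle\beta,\gamma^\vee\rangle$ can reach the extremal value $3$.
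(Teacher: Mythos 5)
Your overall strategy for the vertical and horizontal isomorphisms tracks the paper's proof closely, and the base-case grading estimates you sketch for the Chevalley expansion are essentially the content of the paper's Proposition \ref{propforreduction} (your ``$G_2$'' concern, by the way, is dissolved by Lemma \ref{lemmaUSgamma}, which under the quantum-Chevalley length hypothesis forces $\langle\beta,\gamma^\vee\rangle=1$ whenever $\gamma\neq\beta$).

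The genuine gap is in the inductive step for $F_\mathbf{a}\star F_\mathbf{b}\subset F_{\mathbf{a}+\mathbf{b}}$. You propose to write $u=s_iu'$, solve the Chevalley identity for $\sigma^u$ in terms of $\sigma^{s_i}\star\sigma^{u'}$ ``plus terms of strictly smaller Chevalley complexity,'' and then apply associativity and induction. But the classical Chevalley terms $\sigma^{u's_\gamma}$ appearing alongside $\sigma^u$ in $\sigma^{s_i}\star\sigma^{u'}$ have $\ell(u's_\gamma)=\ell(u')+1=\ell(u)$, so they do not have strictly smaller length, and no obvious alternative ``complexity'' measure makes the induction well-founded: to estimate $\sigma^u\star\sigma^v\alpha^Jq_\mu$ you would need to already control $\sigma^{u's_\gamma}\star\sigma^v\alpha^Jq_\mu$ for elements $u's_\gamma$ of the same length as $u$. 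The paper circumvents this circularity with the positivity property (Proposition \ref{propPositivity}): in the expansion
$(\sigma^{s_i}\star\sigma^v)\star\sigma^u\alpha^Iq_\lambda=\langle\chi_i,\gamma^\vee\rangle\sigma^w\star\sigma^u\alpha^Iq_\lambda+\sum c'\,\sigma^{w'}\alpha^{I'}q_{\lambda'}\star\sigma^u\alpha^Iq_\lambda$,
all coefficients in all factors lie in $\mathbb{Z}_{\geq0}[\alpha_1,\dots,\alpha_n]$, so there is no cancellation of monomials $\sigma^z\alpha^Kq_\nu$ between the summands; since the left-hand side lies in $F_{\mathbf{a}+\mathbf{b}}$ (by associativity and the induction on $\ell(w)$ for the shorter factor $v$), each individual summand, and in particular $\sigma^w\star\sigma^u\alpha^Iq_\lambda$, must lie in $F_{\mathbf{a}+\mathbf{b}}$. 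Without this positivity argument you cannot isolate the term you want from the sum, and your proof does not close. You should state positivity as a key input and explain why no cancellation can occur.

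A secondary difference concerns the horizontal isomorphism: you propose a ``term-by-term comparison of the equivariant quantum Chevalley formulas on $G/B$ and $G/P_\beta$'' followed by generation in divisor classes, whereas the paper simply invokes the equivariant Peterson--Woodward comparison formula (Proposition \ref{propcomparison}) to get the structure-constant identity directly, combined with Proposition \ref{propforreduction} to handle the $q_{\mu_P}$ factors. Your route can in principle be made to work, but it effectively amounts to re-deriving a Chevalley-level special case of the PW comparison and then bootstrapping via the filtered algebra structure, which is more delicate than simply citing the comparison formula the paper has already established.
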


 \begin{remark}\label{corgradedisom}
    There is  a  $\mathbb{Z}^2$-filtration   $\mathcal{F}'$ on    $QH^*_T(G/B)[q_{\beta^\vee}^{-1}]$, naturally extended from $\mathcal{F}$.
      The above {\upshape $\Psi_{\scriptsize\mbox{ver}}^\beta$}, {\upshape $\Psi_{\scriptsize\mbox{hor}}^\beta$} induce an isomorphism of $\mathbb{Z}^2$-graded $\mathbb{Z}$-algebras:
        {\upshape $$ \Psi_{\scriptsize\mbox{ver}}^\beta\otimes \Psi_{\scriptsize\mbox{hor}}^\beta :\, \, Gr^{\mathcal{F}'}\big(QH^*_T(G/B)[q_{\beta^\vee}^{-1}]\big)
                      \overset{\cong}{\longrightarrow} QH^*(\mathbb{P}^1)[t^{-1}]\otimes_\mathbb{Z}QH^*_T(G/P).$$}
\end{remark}

\bigskip

Our second main result is the next   generalization  of    \cite[Theorem 1.1]{LeungLi-QuantumToClassical} to   $QH^*_T(G/B)$, with  the statements  exactly of the same form. We simply denote $\mbox{sgn}_i:=\mbox{sgn}_{\alpha_i}$.

\begin{thm}\label{thmforQtoC}
 Let $u, v, w\in W$ and $\lambda\in Q^\vee$. The coefficient   $N_{u, v}^{w, \lambda}$ of $\sigma^wq_\lambda$ in the  equivariant quantum product $\sigma^u\star \sigma^v$ in $QH^*_T(G/B)$ satisfies the following.
   \begin{enumerate}
      \item   $N_{u, v}^{w, \lambda}=0$  unless  {\upshape $\mbox{sgn}_i(w)+\langle \alpha_i, \lambda\rangle \leq \mbox{sgn}_i(u)+\mbox{sgn}_i(v)$} for all $1\leq i\leq n.$
     \item  If    {\upshape $ \mbox{sgn}_k(w)+\langle \alpha_k, \lambda\rangle =\mbox{sgn}_k(u)+\mbox{sgn}_k(v)=2$} for some  $1\leq k\leq n$, then
           {\upshape  $$N_{u, v}^{w, \lambda}=N_{us_k, vs_k}^{w, \lambda-\alpha^\vee_k}=
                 \begin{cases} N_{u, vs_k}^{ws_k, \lambda-\alpha^\vee_k}, &   i\!f    \mbox{ sgn}_k(w)=0 \\
                               \vspace{-0.3cm}   & \\
                         N_{u, vs_k}^{ws_k, \lambda}, & i\!f   \mbox{ sgn}_k(w)=1 \,\, {}_{\displaystyle .}  \end{cases}$$
            }
   \end{enumerate}

\end{thm}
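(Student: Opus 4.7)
The plan is to deduce both parts of Theorem \ref{thmforQtoC} from Theorem \ref{thmfiltration} and its localized refinement Remark \ref{corgradedisom}, with a careful identification of which $G/P_{\alpha_k}$ structure constant contributes to each top-graded piece. For part (1), I would apply the filtration with $\beta=\alpha_i$: the classes $\sigma^u$ and $\sigma^v$ sit in $F_{(\mbox{sgn}_i(u),\,\ell(u)-\mbox{sgn}_i(u))}$ and $F_{(\mbox{sgn}_i(v),\,\ell(v)-\mbox{sgn}_i(v))}$ respectively, so by $F_{\mathbf{a}}\star F_{\mathbf{b}}\subset F_{\mathbf{a}+\mathbf{b}}$ the product lies in $F_{(\mbox{sgn}_i(u)+\mbox{sgn}_i(v),\ast)}$. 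Any basis element $\sigma^w\alpha^I q_\lambda$ appearing with nonzero coefficient must then have lex-grading bounded above by this, which immediately forces $\mbox{sgn}_i(w)+\langle\alpha_i,\lambda\rangle\leq\mbox{sgn}_i(u)+\mbox{sgn}_i(v)$.

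For part (2) I would set $\beta=\alpha_k$ and work inside the localized graded algebra of Remark \ref{corgradedisom}. The hypothesis forces $\mbox{sgn}_k(u)=\mbox{sgn}_k(v)=1$, so $us_k<u$ and $vs_k<v$. The equivariant quantum Chevalley formula (Proposition \ref{propQChevalley}) applied to $\sigma^{s_k}\star\sigma^{us_k}$ singles out $\sigma^u$ as the unique leading term in bidegree $(1,\ell(u)-1)$, so $\overline{\sigma^u}$ pulls back under $\Psi_{\scriptsize\mbox{ver}}^{\alpha_k}\otimes\Psi_{\scriptsize\mbox{hor}}^{\alpha_k}$ to $x\otimes\sigma^{us_k}_{P_{\alpha_k}}$; similarly $\overline{\sigma^v}\leftrightarrow x\otimes\sigma^{vs_k}_{P_{\alpha_k}}$. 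Multiplying in the tensor product and using the relation $x^2=t$ produces
\[
(x\otimes\sigma^{us_k}_{P_{\alpha_k}})\cdot(x\otimes\sigma^{vs_k}_{P_{\alpha_k}})=t\otimes\bigl(\sigma^{us_k}_{P_{\alpha_k}}\star_{P_{\alpha_k}}\sigma^{vs_k}_{P_{\alpha_k}}\bigr),
\]
so modulo strictly lower filtration the top bidegree $(2,\ast)$ piece of $\sigma^u\star\sigma^v$ equals $q_{\alpha_k^\vee}\,\psi_{\alpha_k}\bigl(\sigma^{us_k}_{P_{\alpha_k}}\star_{P_{\alpha_k}}\sigma^{vs_k}_{P_{\alpha_k}}\bigr)$.

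To extract the first equality, I would read off the coefficient of $\sigma^w q_\lambda$ with $\mbox{sgn}_k(w)+\langle\alpha_k,\lambda\rangle=2$. A case analysis on $\mbox{sgn}_k(w)\in\{0,1\}$ identifies the unique $(w',\lambda_{P_{\alpha_k}})$ whose image under $\psi_{\alpha_k}$, shifted by $q_{\alpha_k^\vee}$, is $\sigma^w q_\lambda$: in both cases the Peterson-Woodward lift is $\lambda_B=\lambda-\alpha_k^\vee$, with $\omega_{P'}=s_k$ and $w'=w$ when $\mbox{sgn}_k(w)=0$, and with $\omega_{P'}=1$ and $w'=ws_k$ when $\mbox{sgn}_k(w)=1$. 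The equivariant Peterson-Woodward comparison (Proposition \ref{propcomparison}) then converts $N^{w',\lambda_{P_{\alpha_k}},P_{\alpha_k}}_{us_k,vs_k}$ back to $N^{w,\lambda-\alpha_k^\vee}_{us_k,vs_k}$ in both cases, yielding $N_{u,v}^{w,\lambda}=N_{us_k,vs_k}^{w,\lambda-\alpha_k^\vee}$.

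The second equality is obtained by repeating the graded argument for $\sigma^u\star\sigma^{vs_k}$. Since $\mbox{sgn}_k(vs_k)=0$, the top bidegree is $(1,\ast)$ and the image in the tensor product is $x\otimes\bigl(\sigma^{us_k}_{P_{\alpha_k}}\star_{P_{\alpha_k}}\sigma^{vs_k}_{P_{\alpha_k}}\bigr)$. Extracting the coefficient of $\sigma^{ws_k}q_{\lambda-\alpha_k^\vee}$ when $\mbox{sgn}_k(w)=0$, or of $\sigma^{ws_k}q_\lambda$ when $\mbox{sgn}_k(w)=1$, pulls out precisely the same $G/P_{\alpha_k}$ structure constant $N^{w',\lambda_{P_{\alpha_k}},P_{\alpha_k}}_{us_k,vs_k}$ as appeared in the first equality, so both right-hand sides agree with $N_{u,v}^{w,\lambda}$. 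The main obstacle will be the combinatorial bookkeeping: one must verify in each sign case for $\mbox{sgn}_k(w)$ that the Chevalley expansion of $\sigma^{s_k}\star\sigma^{w'\omega_P\omega_{P'}}q_{\lambda_B}$ really produces the desired class $\sigma^{ws_k}q_\bullet$ as a top-degree contribution, and that the shift $\lambda_B\mapsto\lambda_B+\alpha_k^\vee$ matches the intended $q$-degree; this is a finite case check that follows from Proposition \ref{propQChevalley} together with the definition of the Peterson-Woodward lift.
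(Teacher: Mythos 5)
Your proposal is correct and follows essentially the same route as the paper's proof: part (1) is read off from the $\mathbb{Z}^2$-filtered algebra structure of Theorem \ref{thmfiltration}, and part (2) is obtained by working in the top graded piece, identifying it with a $QH^*_T(G/P_{\alpha_k})$ structure constant via $\Psi^{\alpha_k}_{\scriptsize\mbox{hor}}$ and then converting back to $G/B$ via the equivariant Peterson--Woodward comparison (Proposition \ref{propcomparison}). Two small remarks on presentation rather than substance: your claim that $\overline{\sigma^u}$ corresponds to $x\otimes\sigma^{us_k}_{P_{\alpha_k}}$, justified by inspecting the Chevalley expansion of $\sigma^{s_k}\star\sigma^{us_k}$, is exactly what Proposition \ref{propforreduction} already supplies under hypotheses $(\diamond)$, so you need not re-derive it; and for the remaining identities the paper re-groups $\overline{\sigma^u}\star\overline{\sigma^v}=(\overline{\sigma^u}\star\overline{\sigma^{vs_k}})\star\overline{\sigma^{s_k}}$ and matches coefficients term-by-term, while you instead compute the top $(1,\ast)$-graded piece of $\sigma^u\star\sigma^{vs_k}$ directly and observe it lands on the same $G/P_{\alpha_k}$ coefficient $N^{w',\lambda_{P_{\alpha_k}}}_{us_k,vs_k}$ --- these are the same argument organized slightly differently, and both rely on the degree constraint $|I|=\ell(u)+\ell(v)-\ell(w)-\langle2\rho,\lambda\rangle$ to ensure all of $N^{w,\lambda}_{u,v}$ sits in the top graded piece.
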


\begin{cor}\label{correduction}
     Let  $u, v, w\in W$, $\alpha\in\Delta$ and $\lambda\in Q^\vee$.
     \begin{enumerate}
        \item If    {\upshape $\langle \alpha, \lambda\rangle\!=\mbox{sgn}_\alpha(u)=0$}\! and {\upshape  $\mbox{sgn}_\alpha(w)=\mbox{sgn}_\alpha(v)=1$},
           then  $N_{u, v}^{w, \lambda}=N_{u, vs_\alpha}^{ws_\alpha, \lambda}.$
         \item If    {\upshape $\langle \alpha, \lambda\rangle\!=\mbox{sgn}_\alpha(u)=1$} \!and {\upshape  $\mbox{sgn}_\alpha(w)=\mbox{sgn}_\alpha(v)=0$},
           then  $N_{u, v}^{w, \lambda}=N_{us_\alpha, v}^{ws_\alpha, \lambda-\alpha^\vee}\!.$
      \end{enumerate}
 \end{cor}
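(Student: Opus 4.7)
My plan is to handle the two parts of Corollary \ref{correduction} separately: part (2) I expect to be a direct specialization of Theorem \ref{thmforQtoC}(2), while part (1) does not fit any substitution into that theorem and will require the $\mathbb{Z}^2$-filtered structure of Theorem \ref{thmfiltration}.

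For part (2), set $u':=u$, $v':=vs_\alpha$, $w':=ws_\alpha$, $\lambda':=\lambda$ in Theorem \ref{thmforQtoC}(2). Under the hypotheses of Corollary \ref{correduction}(2), one has $\mbox{sgn}_\alpha(u')+\mbox{sgn}_\alpha(v')=1+1=2$ and $\mbox{sgn}_\alpha(w')+\langle\alpha,\lambda'\rangle=1+1=2$. Since $\mbox{sgn}_\alpha(w')=1$, Theorem \ref{thmforQtoC}(2) furnishes
\begin{align*}
N_{u,vs_\alpha}^{ws_\alpha,\lambda}\;=\;N_{u',v's_\alpha}^{w's_\alpha,\lambda'}\;=\;N_{u,v}^{w,\lambda}\quad\text{and}\quad N_{u,vs_\alpha}^{ws_\alpha,\lambda}\;=\;N_{u's_\alpha,v's_\alpha}^{w',\lambda'-\alpha^\vee}\;=\;N_{us_\alpha,v}^{ws_\alpha,\lambda-\alpha^\vee},
\end{align*}
and transitivity gives the claim $N_{u,v}^{w,\lambda}=N_{us_\alpha,v}^{ws_\alpha,\lambda-\alpha^\vee}$.

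Part (1) is more delicate: the hypothesis $\mbox{sgn}_\alpha(u)+\mbox{sgn}_\alpha(v)=\mbox{sgn}_\alpha(w)+\langle\alpha,\lambda\rangle=1$ sits one level below the doubly saturated value $2$ required by Theorem \ref{thmforQtoC}(2), and no substitution into that theorem yields the claim. I plan instead to invoke the filtration of Theorem \ref{thmfiltration}. Under $gr_\alpha$, $\sigma^u$ and $\sigma^v$ have grades $(0,\ell(u))$ and $(1,\ell(v)-1)$, so $\sigma^u\star\sigma^v\in F_{(1,\,\ell(u)+\ell(v)-1)}$, and the target $\sigma^wq_\lambda$ sits in the same grade. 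Passing to the associated graded algebra and using the isomorphisms $\Psi_{\mathrm{ver}}^\alpha$, $\Psi_{\mathrm{hor}}^\alpha$ together with the tensor decomposition of Remark \ref{corgradedisom}, I would identify $\overline{\sigma^v}=\overline{\sigma^{s_\alpha}}\cdot\overline{\sigma^{vs_\alpha}}$ and $\overline{\sigma^wq_\lambda}=\overline{\sigma^{s_\alpha}}\cdot\overline{\sigma^{ws_\alpha}q_\lambda}$ (noting that $vs_\alpha,ws_\alpha\in W^{P_\alpha}$, and that $\langle\alpha,\lambda\rangle=0$ makes $\lambda$ its own Peterson-Woodward lift by Lemma \ref{lemmalifting}, so $\psi_\alpha(\sigma^{ws_\alpha}q_{\lambda_{P_\alpha}})=\sigma^{ws_\alpha}q_\lambda$). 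Using commutativity of the graded product,
\begin{align*}
\overline{\sigma^u\star\sigma^v}\;=\;\overline{\sigma^{s_\alpha}}\cdot\overline{\sigma^u\star\sigma^{vs_\alpha}},
\end{align*}
and extracting the coefficient of $\overline{\sigma^{s_\alpha}}\cdot\overline{\sigma^{ws_\alpha}q_\lambda}$ on both sides yields $N_{u,v}^{w,\lambda}=N_{u,vs_\alpha}^{ws_\alpha,\lambda}$.

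The main obstacle I anticipate is making these graded-algebra identifications precise, in particular verifying that $\overline{\sigma^v}$ and $\overline{\sigma^wq_\lambda}$ genuinely factor as claimed in the tensor decomposition rather than involving lower-grade perturbations. A direct check uses Chevalley: Proposition \ref{propQChevalley} gives $\sigma^{s_\alpha}\star\sigma^{vs_\alpha}=\sigma^v+(\text{strictly lower-grade terms})$, and similarly for $\sigma^{s_\alpha}\star\sigma^{ws_\alpha}q_\lambda$, which legitimizes the factorizations in the associated graded. Should this filtration route encounter technical snags, a clean fallback is the associativity identity $\sigma^{s_\alpha}\star(\sigma^u\star\sigma^v)=(\sigma^{s_\alpha}\star\sigma^u)\star\sigma^v$: expanding via Proposition \ref{propQChevalley} and eliminating almost every Chevalley summand by Theorem \ref{thmforQtoC}(1) under the present sign hypotheses, one extracts the same identity $N_{u,v}^{w,\lambda}=N_{u,vs_\alpha}^{ws_\alpha,\lambda}$ by direct coefficient comparison.
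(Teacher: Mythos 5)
Your proof of part~(2) is correct and is exactly the paper's argument: specialize Theorem~\ref{thmforQtoC}(2) to $(u, vs_\alpha, ws_\alpha, \lambda, \alpha)$ and combine the two identities.

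For part~(1), however, your premise is wrong: there \emph{is} a substitution into Theorem~\ref{thmforQtoC}(2) that works, and it is what the paper uses. Shift the degree as well as the permutations: apply the theorem to $(v,\, us_\alpha,\, ws_\alpha,\, \lambda+\alpha^\vee,\, \alpha)$. The hypotheses become $\mathrm{sgn}_\alpha(v)+\mathrm{sgn}_\alpha(us_\alpha)=1+1=2$ and $\mathrm{sgn}_\alpha(ws_\alpha)+\langle\alpha,\lambda+\alpha^\vee\rangle=0+2=2$, so Theorem~\ref{thmforQtoC}(2) applies. Since $\mathrm{sgn}_\alpha(ws_\alpha)=0$, the theorem gives
$N_{v, us_\alpha}^{ws_\alpha, \lambda+\alpha^\vee}=N_{vs_\alpha, u}^{ws_\alpha, \lambda}$ (first equality, cancelling $us_\alpha s_\alpha=u$) and $N_{v, us_\alpha}^{ws_\alpha, \lambda+\alpha^\vee}=N_{v, u}^{w, \lambda}$ (the $\mathrm{sgn}=0$ branch), whence $N_{u,v}^{w,\lambda}=N_{u,vs_\alpha}^{ws_\alpha,\lambda}$ by commutativity. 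The key observation you missed is that raising $\lambda$ by $\alpha^\vee$ pushes the first-coordinate grading from $1$ up to $2$, landing exactly at the ``doubly saturated'' case.

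Your alternative route for part~(1) via the $\mathbb{Z}^2$-filtration of Theorem~\ref{thmfiltration}, Proposition~\ref{propforreduction}, and the tensor decomposition of Remark~\ref{corgradedisom} is valid as a strategy (you correctly note that $vs_\alpha, ws_\alpha\in W^{P_\alpha}$ and that $\langle\alpha,\lambda\rangle=0$ makes $\lambda$ its own Peterson--Woodward lift), but it amounts to reproducing the internal machinery of the proof of Theorem~\ref{thmforQtoC}(2) rather than invoking that theorem directly. The justification that multiplication by $\overline{\sigma^{s_\alpha}}$ allows ``extracting coefficients'' needs to appeal to the injectivity of $x\cdot(-)$ in $QH^*(\mathbb{P}^1)[t^{-1}]\otimes_{\mathbb{Z}}QH^*_T(G/P_\alpha)$, which is available after localizing as in Remark~\ref{corgradedisom} but deserves to be made explicit. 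In short: part~(2) matches the paper; part~(1) contains a false claim about the applicability of Theorem~\ref{thmforQtoC}(2), and the replacement argument you give, while workable, is considerably heavier than the one-line substitution the theorem actually affords.
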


\begin{proof}
  For part (1), we note
      $\langle \alpha, \lambda+\alpha^\vee\rangle=2, \mbox{sgn}_\alpha(ws_{\alpha})=0,  \mbox{sgn}_\alpha(us_\alpha)= \mbox{sgn}_\alpha(v)=1$.
     Applying ``$(u, v, w, \lambda, \alpha_k)$" in Theorem \ref{thmforQtoC} (2) to $(v, us_\alpha,  ws_{\alpha}, \lambda+\alpha^\vee, \alpha)$, we have  $N_{v, us_{\alpha}}^{ws_\alpha, \lambda+\alpha^\vee}=N_{vs_\alpha, us_\alpha s_{\alpha}}^{ws_{\alpha}, \lambda+\alpha^\vee-\alpha^\vee}=
                  N_{v, us_\alpha s_\alpha}^{ws_\alpha s_\alpha, \lambda+\alpha^\vee-\alpha^\vee}
                              $. That is, $N_{vs_\alpha, u}^{ws_{\alpha}, \lambda}=
                  N_{v, u}^{w, \lambda}$. 
       Similarly, we conclude     (2),  by applying
     ``$(u, v, w, \lambda, \alpha_k)$"   to $(u, vs_\alpha,  ws_{\alpha}, \lambda, \alpha)$.
\end{proof}
\begin{remark}
  When $\lambda=0$,   Corollary \ref{correduction} {  (1)}   was also known as the ``descent-cycling" condition for $H^*_T(G/B)$ in    \cite{knutson:noncomplex}.
 \end{remark}

\subsection{Equivariant Peterson-Woodward comparison formula}
There is a comparison formula,  originally stated by  Peterson  \cite{Peterson} and proved by Woodward \cite{Woodward}. It tells   that every genus zero, three-point Gromov-Witten invariant  of $G/P$ coincides with a corresponding Gromov-Witten invariant of    $G/B$.
It has played  an important role in  the earlier works \cite{LeungLi-functorialproperties, LeungLi-QuantumToClassical, Li-functorial}. In order to prove our main results, we need the  equivariant extension  of the   comparison formula as follows.  Our readers may skip this subsection first, by assuming the following proposition.
  \begin{prop}[Equivariant Peterson-Woodward comparison formula]\label{propcomparison}
             For any $u, v, w\in W^P$ and $\lambda_P\in Q^\vee/Q^\vee_P$, we have
                    $$N_{u,v}^{w, \lambda_P }=N_{u, v}^{w\omega_P\omega_{P'},   \lambda_B},$$
              where  $\lambda_B$ denotes the Peterson-Woodward lifting of $\lambda_P$, and
               $\omega_{P'}$  denotes the longest element in the Weyl subgroup generated by
               $\{s_\alpha~|~ \alpha  \in \Delta_{P}, \langle  \alpha, \lambda_B\rangle =0\}$.
 \end{prop}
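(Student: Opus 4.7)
The plan is to follow Woodward's original strategy for the non-equivariant comparison formula (Lemma 2 of \cite{Woodward}) and adapt it to the equivariant setting by leveraging Mihalcea's uniqueness characterization of $QH^*_T(G/P)$ via the equivariant quantum Chevalley rule \cite{Miha-EQCRandCri}. That characterization says that $QH^*_T(G/P)$ is the unique graded, commutative, associative $S[\mathbf{q}]$-algebra on the free $S[\mathbf{q}]$-module with basis $\{\sigma^u\}_{u\in W^P}$ whose classical limit is $H^*_T(G/P)$ and which satisfies the equivariant quantum Chevalley formula for multiplication by Schubert divisors $\sigma^{s_\alpha}$, $\alpha\in\Delta\setminus\Delta_P$.

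Using this, I would define an alternative product $\star'$ on $\bigoplus_{u\in W^P}S[\mathbf{q}_P]\sigma^u$ by
$$\sigma^u\star'\sigma^v:=\sum_{w\in W^P,\,\lambda_P} N_{u,v}^{w\omega_P\omega_{P'},\,\lambda_B}\,\sigma^w q_{\lambda_P},$$
with the structure constants on the right taken from $QH^*_T(G/B)$. To invoke Mihalcea's characterization and conclude $\star'=\star_P$, I need to verify: (a) $\star'$ is well-defined and respects the grading; (b) its classical limit coincides with the equivariant cup product on $H^*_T(G/P)$; (c) it satisfies the equivariant quantum Chevalley rule for $G/P$. Items (a) and (b) are essentially bookkeeping. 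For the grading, the key identity is $\langle 2\rho_P,\lambda_B\rangle=-\ell(\omega_P\omega_{P'})$, which is a direct consequence of the Peterson--Woodward condition $\langle\gamma,\lambda_B\rangle\in\{0,-1\}$ for $\gamma\in R_P^+$, and this makes the Gromov--Witten degrees on $G/B$ and $G/P$ match. For the classical limit, we use the standard fact that the pullback along $G/B\to G/P$ identifies $\sigma^w_P$ with $\sigma^w_B$ for $w\in W^P$ and that $\omega_{P'}=\omega_P$ when $\lambda_B=0$.

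The technical heart of the proof is establishing (c): applying Proposition \ref{propQChevalley} on the $G/B$ side to compute $\sigma^{s_\alpha}\star\sigma^{v\omega_P\omega_{P''}}$, and matching the output term-by-term with the expected equivariant quantum Chevalley formula for $QH^*_T(G/P)$. For $\alpha\in\Delta\setminus\Delta_P$ the classical terms indexed by $\gamma\in R^+$ split according to whether $\gamma\in R_P^+$ or not, while the quantum terms must be parsed according to whether the resulting coroot defines a Peterson--Woodward lifting. The Graham-positivity of the coefficients (Proposition \ref{propPositivity}) and the identity $\chi_i-u\omega_P\omega_{P''}(\chi_i)=\chi_i-u(\chi_i)$ for $u\in W^P$ and $i$ corresponding to $\alpha\notin\Delta_P$ (a consequence of $\omega_P\omega_{P''}$ lying in $W_P$) will allow the equivariant coefficients on the two sides to align.

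The main obstacle, as in the non-equivariant case, is the combinatorial bookkeeping that identifies exactly which $G/B$ terms survive after passing through the $\omega_P\omega_{P'}$ twist, and in particular showing that contributions from reflections by $\gamma\in R_P^+$ either cancel or recombine into the $\lambda_P=0$ classical terms on the $G/P$ side, while reflections by $\gamma\notin R_P^+\cup(-R_P^+)$ correctly produce the quantum corrections with the correct Peterson--Woodward lifting. Handling the quantum terms is more delicate than the classical ones, because one must verify that the degree $\lambda_B'$ produced by the $G/B$ Chevalley rule is again the Peterson--Woodward lifting of its image $\lambda_P'\in Q^\vee/Q^\vee_P$; this is where the defining condition $\langle\gamma,\lambda_B\rangle\in\{0,-1\}$ for $\gamma\in R^+_P$ plays its essential role.
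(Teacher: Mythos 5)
Your proposal takes a genuinely different route from the paper. The paper proves Proposition~\ref{propcomparison} by reducing it directly to Lam--Shimozono's affine-Grassmannian result (Corollary 10.22 of \cite{LaSh-GoverPaffineGr}, restated as Proposition~\ref{propcomparisoninH(affine)} here): one constructs explicit anti-dominant coweights $\mu,\kappa,\eta$ --- large multiples of fundamental coweights plus a small correction by $\lambda_B$ --- verifies by computation with the level-zero action on affine roots that $u\phi_P(t_\mu)$, $v\phi_P(t_\kappa)$, $w\phi_P(t_\eta)$ lie in $W_{\mathrm{af}}^-\cap(W^P)_{\mathrm{af}}$, and reads off the identity. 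Your plan instead goes via Mihalcea's characterization of $QH^*_T(G/P)$ by the equivariant quantum Chevalley rule: define the twisted product $\star'$ from the $G/B$ constants, verify the axioms, and conclude $\star'=\star_P$ by uniqueness.

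There are two serious gaps. First, to invoke Mihalcea's uniqueness theorem (equivalently, to run his recursive algorithm) one must know that the candidate product $\star'$ is \emph{associative}. This is not a formal consequence of associativity of $\star_B$: the twist $\omega_P\omega_{P'}$ depends on the degree $\lambda_P$, so the substitution $\tilde N_{u,v}^{x,\mu_P}=N_{u,v}^{x\omega_P\omega_{P'(\mu_P)},\mu_B}$ does not distribute over triple products in any obvious way. Establishing associativity of $\star'$ from scratch is essentially as hard as the comparison formula itself, so this cannot be bypassed as bookkeeping; you do not mention it at all. Second, the item you correctly identify as the ``technical heart'' --- matching the $G/B$ Chevalley output with the $G/P$ Chevalley rule under the $\omega_P\omega_{P'}$ twist, including the verification that the degree produced by the $G/B$ quantum term is again a Peterson--Woodward lifting --- is only outlined, never carried out. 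That case analysis is exactly the content of Woodward's non-equivariant proof, and its equivariant version has not been written down; the paper itself remarks that ``a rigorous argument is missing'' for this kind of direct approach and deliberately routes through Lam--Shimozono instead. Your preliminary observations (the pullback identification of classical constants, the degree identity $\langle 2\rho_P,\lambda_B\rangle=-\ell(\omega_P\omega_{P'})$) are correct, but as it stands the proposal is an outline of a possible alternative proof with two substantial unverified components, not a proof.
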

\noindent  That is,  the coefficient    of $\sigma^w_Pq_{\lambda_P}$ in the equivariant quantum product   $\sigma^u_P\star_P\sigma^v_P$ in $QH^*_T(G/P)$
              coincides with the coefficient of $\sigma_B^{w\omega_P\omega_{P'}}q_{\lambda_B}$  in
                   $\sigma^u_B\star_B\sigma^v_B$ in $QH^*_T(G/B)$.
\begin{remark}
The above statement is exactly of the same as
   Theorem 10.15 (2) of \cite{LaSh-GoverPaffineGr}, which is an equivalent version of the
 non-equivariant Peterson-Woodward comparison formula in terms of Gromov-Witten invariants in \cite{Woodward}.
\end{remark}
\noindent The geometric method   in \cite{Woodward} might also be valid in  the equivariant setting, while a rigorous argument is missing.   In this subsection, we will devote to a proof of Proposition \ref{propcomparison}, by a direct translation of Corollary 10.22 of \cite{LaSh-GoverPaffineGr} by Lam and Shimozono.
   We will have to introduce some notations on the affine Kac-Moody algebras, which, however, will be used in the rest of this subsection only.

The affine Weyl group of $G$ is the semi-direct product  $W_{\scriptsize\mbox{af}}:=W\ltimes Q^\vee$, in which the image of $\lambda\in Q^\vee$ in  $W_{\scriptsize\mbox{af}}$ is a translation, denoted as $t_\lambda$. We have $t_{w(\lambda)}=wt_\lambda w^{-1}$ and $t_{\lambda}t_{\lambda'}=t_{\lambda+\lambda'}$ for all $w\in W$ and $\lambda, \lambda'\in Q^\vee$. Let $\tilde Q^\vee$ denote the set of anti-dominant elements in $Q^\vee$, i.e.,
$\tilde Q^\vee=\{\lambda\in Q^\vee~|~ \langle \alpha, \lambda\rangle \leq 0 \mbox{ for all } \alpha\in \Delta\}$.
Denote $W_{\scriptsize\mbox{af}}^-:=\{wt_\lambda~|~ \lambda\in \tilde Q^\vee, \mbox{ and if } \alpha\in \Delta \mbox{ satisfies } \langle \alpha, \lambda\rangle=0 \mbox{ then } w(\alpha)\in R^+\}$, which consists of the minimal length representatives of the cosets $W_{\scriptsize\mbox{af}}/W$. The (level zero) action of $W_{\scriptsize\mbox{af}}$  on the affine root system  $ R_{\scriptsize\mbox{af}}= R_{\scriptsize\mbox{af}}^+\bigsqcup (-R_{\scriptsize\mbox{af}}^+)$ is given by $wt_\lambda(\gamma+m\delta)=w(\gamma)+(m-\langle \gamma, \lambda\rangle)\delta$, in which $\delta$ denotes the null root and
   $ R_{\scriptsize\mbox{af}}^+:=\{\gamma+m\delta~|~ m\in \mathbb{Z}^+ \mbox{ or } (m=0 \mbox{ and  } \gamma\in R^+)\}$.
Denote the subgroup           $ (W_P)_{\scriptsize\mbox{af}}:=W_P\ltimes Q^\vee_P$ and the subset
 $ (W^P)_{\scriptsize\mbox{af}}:=\{x\in  W_{\scriptsize\mbox{af}}~|~ x(\gamma+m\delta)\in R_{\scriptsize\mbox{af}}^+ \mbox{ for all }
             \gamma+m\delta\in R_{\scriptsize\mbox{af}}^+ \mbox{ with } \gamma\in R_P\}$. 

         \begin{lemma}[See e.g. Lemma 10.6 and Proposition 10.10 of \cite{LaSh-GoverPaffineGr}]\label{lemmapropfactorization} For every {\upshape $x\in W_{\scriptsize\mbox{af}}$}, there is a unique factorization
                 $x=x_1x_2$ with {\upshape $x_1\in (W^P)_{\scriptsize\mbox{af}}$} and {\upshape $x_2\in (W_P)_{\scriptsize\mbox{af}}$}.
This defines a map\footnote{The   map is denoted as  $\pi_P$  in \cite{LaSh-GoverPaffineGr}.}
             {\upshape $\phi_P: W_{\scriptsize\mbox{af}}\rightarrow (W^P)_{\scriptsize\mbox{af}}$} by $x\mapsto x_1$.   Then for any $w\in W^P$, we have $\phi_P(wx)=w\phi_P(x)$.
         \end{lemma}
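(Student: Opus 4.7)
The plan is to reduce the statement to standard Coxeter theory for the affine Weyl group $W_{\scriptsize\mbox{af}}$, viewed as a Coxeter group with its standard simple-reflection generators acting on $R_{\scriptsize\mbox{af}}$. The key observation is that $(W_P)_{\scriptsize\mbox{af}} = W_P \ltimes Q^\vee_P$ is naturally identified with the affine Weyl group of the sub-root-system $R_P$, and hence is a reflection subgroup of $W_{\scriptsize\mbox{af}}$ generated by the affine reflections $s_{\gamma+m\delta}$ with $\gamma \in R_P$. The defining positivity condition for $(W^P)_{\scriptsize\mbox{af}}$ is exactly the standard characterization of minimum-length representatives for cosets in $W_{\scriptsize\mbox{af}}/(W_P)_{\scriptsize\mbox{af}}$. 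Granting this, the factorization $x=x_1x_2$ is produced by taking $x_1$ to be any element of minimum length in the coset $x\cdot (W_P)_{\scriptsize\mbox{af}}$ and setting $x_2:=x_1^{-1}x$; the strong exchange condition then forces $x_1$ to satisfy the positivity condition, and the length-additivity $\ell(x_1x_2)=\ell(x_1)+\ell(x_2)$ (proved by an inversion count) yields uniqueness.

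For the equivariance $\phi_P(wx)=w\phi_P(x)$ with $w\in W^P$, it suffices to verify $w\phi_P(x)\in (W^P)_{\scriptsize\mbox{af}}$; for then, by the uniqueness just established, $wx=(w\phi_P(x))\cdot x_2$ is the required factorization of $wx$, and the claim follows. Checking the positivity condition for $w\phi_P(x)$ root by root: for $\eta=\gamma+m\delta\in R_{\scriptsize\mbox{af}}^+$ with $\gamma\in R_P$, the image $\phi_P(x)(\eta)$ lies in $R_{\scriptsize\mbox{af}}^+$ by hypothesis, and left multiplication by $w\in W$ acts trivially on the $\delta$-component, so positivity is automatic whenever the $\delta$-level of $\phi_P(x)(\eta)$ is strictly positive. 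The delicate case is when $\phi_P(x)(\eta)$ is a pure finite positive root. Writing $\phi_P(x)=vt_\lambda$, this case corresponds to $\eta=\gamma\in R_P^+$ with $\langle\gamma,\lambda\rangle=0$ and image $v(\gamma)\in R^+$; here one combines the defining property $w(R_P^+)\subset R^+$ of $W^P$ with the finer structure of $v$ (namely that $v$ is a minimum-length representative in the coset $vW_{P'}$, where $W_{P'}$ denotes the stabilizer of $\lambda$ in $W_P$) to conclude $w(v(\gamma))\in R^+$.

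The main technical obstacle I anticipate is this ``$\delta$-level zero'' case. The cleanest route will be an induction on $\ell(w)$, reducing to $w$ a simple reflection in $W^P$, where the strong exchange condition for $W_{\scriptsize\mbox{af}}$ combined with the minimum-length characterization of $v$ should rule out any positivity violation. Once this case is settled, both the factorization and the equivariance follow, and the map $\phi_P$ is established with the stated compatibility property.
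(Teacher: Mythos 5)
The paper does not actually prove this lemma; it is quoted directly from Lam--Shimozono, so there is no in-paper proof to compare your attempt against. Judged on its own, your proposal has a genuine gap at exactly the point you flag. Your reduction to showing $w\,\phi_P(x)\in(W^P)_{\scriptsize\mbox{af}}$ is correct, and left multiplication by $w\in W$ indeed disposes of all affine roots with positive $\delta$-level. But the assertion $w(v(\gamma))\in R^+$ that you need in the $\delta$-level-zero case is \emph{false} for unrestricted $x$. In type $A_2$ with $\Delta_P=\{\alpha_1\}$ take $x=s_2$: since $s_2(\alpha_1)=\alpha_1+\alpha_2\in R^+$, one has $s_2\in(W^P)_{\scriptsize\mbox{af}}$, so $\phi_P(x)=s_2$, with $v=s_2$, $\lambda=0$. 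Take $w=s_1s_2\in W^P$ (as $w(\alpha_1)=\alpha_2\in R^+$). For $\gamma=\alpha_1\in R_P^+$ with $\langle\gamma,\lambda\rangle=0$ one has $v(\gamma)=\alpha_1+\alpha_2\in R^+$, but $w(v(\gamma))=s_1s_2(\alpha_1+\alpha_2)=s_1(\alpha_1)=-\alpha_1\notin R^+$. Consequently $w\phi_P(x)=s_1\notin(W^P)_{\scriptsize\mbox{af}}$, whereas $\phi_P(wx)=\phi_P(s_1)=\mathrm{id}$ because $s_1\in(W_P)_{\scriptsize\mbox{af}}$; the identity $\phi_P(wx)=w\phi_P(x)$ fails outright, so no line of argument can close the sketch. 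Your fallback --- induction on $\ell(w)$ via simple reflections --- does not help either: $W^P$ is not closed under left prefixes of reduced words ($s_1s_2\in W^P$ but $s_1\notin W^P$), so the induction cannot remain in $W^P$; and the base case itself can fail (in type $A_3$ with $\Delta_P=\{\alpha_1\}$, take $v=s_1s_2$, $\gamma=\alpha_1$, $w=s_2$; then $v(\gamma)=\alpha_2$ and $w(v(\gamma))=-\alpha_2<0$).

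What makes the paper's applications go through is that it only invokes the lemma with $x$ a translation element $t_\mu$. In that case the factorization $t_\mu=x_1x_2$ forces the finite component $v$ of $x_1=\phi_P(t_\mu)$ to be the inverse of the finite component of $x_2\in(W_P)_{\scriptsize\mbox{af}}$, hence $v\in W_P$; then in the $\delta$-level-zero case $v(\gamma)\in R_P^+$, and $w(v(\gamma))\in R^+$ follows from $w\in W^P$. So your argument can be completed only after adding a hypothesis of this kind --- for instance ``$x$ a translation'' or ``the finite component of $\phi_P(x)$ lies in $W_P$'' --- and it is this hypothesis, absent from the statement as printed, that your ``finer structure of $v$'' hand-wave is silently relying on. You should either supply the hypothesis explicitly and finish the level-zero check, or quote the precise form of the Lam--Shimozono proposition rather than the stronger assertion here.
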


 \begin{prop}[Corollaries 9.3 and 10.22    of \cite{LaSh-GoverPaffineGr}]\label{propcomparisoninH(affine)}
    Let $u, v, w\in W^P$ and $\lambda_P\in Q^\vee/Q^\vee_P$. Pick $\mu, \kappa, \eta\in \tilde Q^\vee$ such that $u\phi_P(t_{\mu})$,
     $v\phi_P(t_{\kappa})$ and {\upshape $w\phi_P(t_{\eta})\in  W_{\scriptsize\mbox{af}}^-\cap (W^P)_{\scriptsize\mbox{af}}$} where $\lambda_P=\eta-\mu-\kappa+Q^\vee_P$. Write $u\phi_P(t_{\mu})=u't_{\mu'}, v\phi_P(t_{\kappa})=v't_{\kappa'}$ and $w\phi_P(t_{\eta})=w't_{\eta'}$, where $u', v', w'\in W$ and $\mu', \kappa', \eta'\in Q^\vee$.  Then we have
       $$N_{u, v}^{w, \lambda_P}= N_{u', v'}^{w', \eta'-\mu'-\kappa'},$$
    in which the left-hand  (resp. right-hand) side is a structure coefficient  of the equivariant quantum product
      $\sigma^u_P\star_P\sigma^v_P \in QH^*_T(G/P)$ (resp. $\sigma^{u'}_B\star_B\sigma^{v'}_B\in  QH^*_T(G/B)$).
 \end{prop}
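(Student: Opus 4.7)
Since this is stated as a direct citation of Corollaries 9.3 and 10.22 of \cite{LaSh-GoverPaffineGr}, the plan is essentially to recall the Lam--Shimozono equivariant Peterson isomorphism and trace through the identification of Schubert bases on the two sides.

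The first step is to recall that for each parabolic $P\supseteq B$, Lam and Shimozono construct an $S$-algebra embedding
$$\Phi_P : QH^*_T(G/P)_{\mathrm{loc}} \hookrightarrow H_*^T(\mathrm{Gr}_G)_{\mathrm{loc}}$$
(after inverting the quantum parameters) into the equivariant homology of the affine Grassmannian $\mathrm{Gr}_G$, sending the Schubert basis element $\sigma^u_P q_{\lambda_P}$ to the affine homology Schubert class $\xi^{u\phi_P(t_\mu)}$ for \emph{any} $\mu\in \tilde Q^\vee$ with $u\phi_P(t_\mu)\in W_{\mathrm{af}}^-\cap (W^P)_{\mathrm{af}}$ and $-\mu+Q_P^\vee = \lambda_P$. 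Corollary 10.22 of loc.\ cit.\ guarantees that this correspondence is well-defined, i.e., any two valid choices of $\mu$ yield the same affine Weyl group element $u\phi_P(t_\mu)$, so the image is independent of the choice made.

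The second step is to apply $\Phi_P$ to the equivariant quantum product $\sigma^u_P \star_P \sigma^v_P = \sum N_{u,v}^{w,\lambda_P} \sigma^w_P q_{\lambda_P}$ to convert it into an identity
$$\xi^{u\phi_P(t_\mu)}\cdot \xi^{v\phi_P(t_\kappa)} \,=\, \sum N_{u,v}^{w,\lambda_P}\, \xi^{w\phi_P(t_\eta)}$$
in $H_*^T(\mathrm{Gr}_G)_{\mathrm{loc}}$, where the sum is over triples $(w,\lambda_P,\eta)$ with $w\in W^P$ and $\eta-\mu-\kappa+Q_P^\vee = \lambda_P$ (this constraint is forced by the $H_2(G/P)$-grading on the quantum side). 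Specializing now to $P=B$: in this case $\phi_B$ is trivial on admissible translations, and $\Phi_B$ sends $\sigma^{u'}_B q_{\mu'}$ to $\xi^{u' t_{\mu'}}$ whenever $u't_{\mu'}\in W_{\mathrm{af}}^-$. By the uniqueness statement in Lemma \ref{lemmapropfactorization}, the elements $u\phi_P(t_\mu) = u' t_{\mu'}$, $v\phi_P(t_\kappa)=v't_{\kappa'}$, and $w\phi_P(t_\eta)=w't_{\eta'}$ are the \emph{same} elements of $W_{\mathrm{af}}$ written in two ways, so the affine product identity above is equally well the image under $\Phi_B$ of $\sigma^{u'}_B \star_B \sigma^{v'}_B$. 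Reading off the coefficient of the common affine Schubert class $\xi^{w\phi_P(t_\eta)} = \xi^{w' t_{\eta'}}$ on both sides yields $N_{u,v}^{w,\lambda_P} = N_{u',v'}^{w',\eta'-\mu'-\kappa'}$.

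The main obstacle would be unpacking the sign and translation conventions of \cite{LaSh-GoverPaffineGr} --- specifically, verifying that when one writes $u\phi_P(t_\mu) = u' t_{\mu'}$ the translation part $\mu'$ really is the one that appears with the expected sign in the $B$-level structure constant, and that admissibility ($W_{\mathrm{af}}^-\cap (W^P)_{\mathrm{af}}$) transports correctly through the decomposition. Once these conventions are pinned down (which is exactly what Corollaries 9.3 and 10.22 accomplish), the coefficient comparison sketched above is immediate; we therefore invoke these two corollaries as a black box rather than redoing the affine nil-Hecke algebra arguments on which they rest.
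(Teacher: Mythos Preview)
The paper does not give its own proof of this proposition: it is stated purely as a citation of Corollaries 9.3 and 10.22 of \cite{LaSh-GoverPaffineGr} and used as a black box to deduce Proposition \ref{propcomparison}. Your write-up, which sketches the Lam--Shimozono equivariant Peterson isomorphism $\Phi_P$ and reads off the coefficient identity by comparing the images of the $P$-side and $B$-side quantum products in $H_*^T(\mathrm{Gr}_G)_{\mathrm{loc}}$, is a faithful outline of the mechanism behind those corollaries and is consistent with how the paper invokes them; in particular your final sentence (that you ``invoke these two corollaries as a black box'') exactly matches what the paper does.
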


\bigskip

\begin{proof}[Proof of Proposition \ref{propcomparison}]
Let $\mu=-12(n+1)M\sum\nolimits_{\alpha\in \Delta\setminus \Delta_P}\chi_\alpha^\vee$ and $\eta=2\mu+\lambda_B$,  where    $M:=\max\{|\langle \alpha, \lambda_B\rangle|+1 ~|~ \alpha\in\Delta\}$. Then $\mu,  \eta$ are both in $Q^\vee$ (since the determinant of the Cartan matrix $\big(\langle \alpha_i, \alpha_j^\vee\rangle\big)$ is equal to $1, 2, 3, 4$ or $n+1$).

 Clearly, for $\alpha\in\Delta\setminus\Delta_P$, we have $\langle\alpha, \mu\rangle<0$ and $\langle\alpha, \eta\rangle<0$. For $\gamma\in R_P^+$ (in particular for $\gamma\in \Delta_P$), we have  $\langle\gamma, \mu\rangle=0$ and $\langle\gamma, \eta\rangle=\langle\gamma, \lambda_B\rangle\in\{0, -1\}$. Hence, $\mu, \eta$ are both in $\tilde Q^\vee$, and $ut_\mu,  w\omega_P\omega_{P'}t_{\eta}$ are both in $W_{\scriptsize\mbox{af}}^-$ by noting $u\in W^P$ and $w\omega_P\omega_{P'}\in W^{P'}$.

Let $\gamma+m\delta \in R_{\scriptsize\mbox{af}}^+$ with $\gamma\in R_P$. Since    $t_\mu(\gamma+m\delta)=\gamma+(m-\langle \gamma, \mu\rangle )\delta=
   \gamma+m\delta\in R_{\scriptsize\mbox{af}}^+$,  $t_\mu$ is in $(W^P)_{\scriptsize\mbox{af}}$.    Note $\omega_P\omega_{P'}t_{\eta}(\gamma+m\delta)=\omega_P\omega_{P'}(\gamma)+(m-\langle \gamma, \lambda_B\rangle )\delta$. Clearly,  $\langle \gamma, \lambda_B\rangle$ is in $\{0, 1, -1\}$, and it vanishes if and only if $\gamma\in R_{P'}$. Note    $\omega_P\omega_{P'}(R_{P'}^+)\subset R^+$  and  $\omega_P\omega_{P'}(R_P^+\setminus R_{P'}^+)\subset -R^+$.  As a consequence, we have
    \begin{enumerate}
      \item[ i)]  if $m>2$ or ($m=1$ and $\gamma\in R^+_P$), then    $m-\langle \gamma, \lambda_B\rangle  >0$;
       \item[ ii)]  if $m=1$ and $\gamma\in -R_{P'}^+$, then    $m-\langle \gamma, \lambda_B\rangle  =1>0$;
       \item[iii)]  if $m=1$ and $\gamma\in (-R^+_P)\setminus (-R^+_{P'})$, then $m-\langle \gamma, \lambda_B\rangle  =0$, and we note $\omega_P\omega_{P'}(\gamma)\in R^+$ in this case;
       \item [iv)] if $m=0$, then   $\gamma\in R^+_P$. Further, if $\gamma\in  R^+_P\setminus R^+_{P'}$, then    $m-\langle \gamma, \lambda_B\rangle  =1>0$; if $\gamma\in  R^+_{P'}$, then  $m-\langle \gamma, \lambda_B\rangle  =0$, and we note  $\omega_P\omega_{P'}(\gamma)\in R^+$.
   \end{enumerate}
    Hence,
     $\omega_P\omega_{P'}t_{\eta}$ is also in $(W^P)_{\scriptsize\mbox{af}}$.   Since
      $\omega_P\omega_{P'}\in W_P$,   $\eta-\omega_P\omega_{P'}(\eta)\in Q^\vee_P$. Thus we have the factorizations
      $t_\mu=t_\mu\cdot \mbox{id}$ and $t_{\eta}= (\omega_P\omega_{P'}t_{\eta})\cdot \big((\omega_P\omega_{P'})^{-1}t_{\eta-\omega_P\omega_{P'}(\eta)}\big)$.
     Hence, we have $\phi_P(t_\mu)=t_\mu$ and $\phi_P(t_\eta)=\omega_P\omega_{P'}t_\eta$ due to the uniqueness of the factorization. Hence,
        $ut_\mu,   w\omega_P\omega_{P'}t_{\eta}$ are  in  $(W^P)_{\scriptsize\mbox{af}}$, by noting $u, w\in W^P$ and using Lemma \ref{lemmapropfactorization}.

Now we set $\kappa:=\mu$ and use the same notation as in Proposition \ref{propcomparisoninH(affine)}. It follows immediately from the above arguments that
  $\mu, \kappa, \eta$ satisfy all the hypotheses of Proposition \ref{propcomparisoninH(affine)}, for which we have
         $u'=u, v'=v, w'=w\omega_P\omega_{P'}$, $\mu'=\mu$, $\kappa'=\kappa$, $\eta'=\eta$ and $\eta'-\mu'-\kappa'=\lambda_B$. Therefore the statement follows.
\end{proof}

\subsection{Proof of theorems}\label{subsecproofofthms}
The proofs of the theorems in section \ref{subsetmainthms} are    similar to the corresponding ones in the non-equivariant case in \cite{LeungLi-functorialproperties, LeungLi-QuantumToClassical}.

\subsubsection{Preliminary propositions} We will need the next combinatorial fact.
 \begin{lemma}[Lemmas 3.8 and   3.9 of \cite{LeungLi-functorialproperties}]\label{lemmaUSgamma} Let $u\in W$ and $\gamma\in R^+$ satisfy   $\ell(us_\gamma)=\ell(u)+1-\langle 2\rho, \gamma^\vee\rangle$. If    $\langle \alpha, \gamma^\vee\rangle>0$ for some  $\alpha\in \Delta$, then
     $\ell(u)-\ell(us_\alpha)=\ell(us_\gamma s_\alpha)-\ell(us_\gamma)=1$. Furthermore if $\gamma\neq \alpha$, then $\langle \alpha, \gamma^\vee\rangle=1$.
  \end{lemma}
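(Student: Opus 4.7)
My plan is to reformulate the quantum-length hypothesis combinatorially using the right inversion set $\mathrm{Inv}(w):=\{\beta\in R^+\mid w(\beta)\in -R^+\}$, for which $\ell(w)=|\mathrm{Inv}(w)|$. The first step is the length identity
$$\ell(us_\gamma)-\ell(u)=\ell(s_\gamma)-2\bigl|\mathrm{Inv}(u)\cap \mathrm{Inv}(s_\gamma)\bigr|,$$
proved by partitioning $R^+=A\sqcup \mathrm{Inv}(s_\gamma)$, noting that $s_\gamma$ is an involution on $A$ and that $\beta\mapsto -s_\gamma(\beta)$ is a fixed-point-free involution on $\mathrm{Inv}(s_\gamma)\setminus\{\gamma\}$. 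Case-by-case, $\beta\in A$ lies in $\mathrm{Inv}(us_\gamma)$ iff $s_\gamma(\beta)\in\mathrm{Inv}(u)$, while $\beta\in \mathrm{Inv}(s_\gamma)$ lies in $\mathrm{Inv}(us_\gamma)$ iff $-s_\gamma(\beta)\notin\mathrm{Inv}(u)$ (with the convention $-s_\gamma(\gamma)=\gamma$). The second step is the inequality $\ell(s_\gamma)+1\leq \langle 2\rho,\gamma^\vee\rangle$, with equality iff $\langle\beta,\gamma^\vee\rangle=1$ for every $\beta\in\mathrm{Inv}(s_\gamma)\setminus\{\gamma\}$; this comes from expanding $\langle 2\rho,\gamma^\vee\rangle=\sum_{\beta\in R^+}\langle\beta,\gamma^\vee\rangle$ and observing that the involution $\beta\leftrightarrow -s_\gamma(\beta)$ on $R^+\setminus\{\gamma\}$ cancels the $A$-orbit contributions while each orbit inside $\mathrm{Inv}(s_\gamma)\setminus\{\gamma\}$ contributes $2\langle\beta,\gamma^\vee\rangle\geq 2$, and $\gamma$ itself contributes $2$.

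Combining the two steps with the hypothesis $\ell(us_\gamma)=\ell(u)+1-\langle 2\rho,\gamma^\vee\rangle$ yields
$$2\bigl|\mathrm{Inv}(u)\cap \mathrm{Inv}(s_\gamma)\bigr|=\ell(s_\gamma)+\langle 2\rho,\gamma^\vee\rangle-1\geq 2\ell(s_\gamma),$$
squeezing all estimates to equalities. This forces both (a) $\mathrm{Inv}(s_\gamma)\subseteq \mathrm{Inv}(u)$ and (b) $\langle\beta,\gamma^\vee\rangle=1$ for every $\beta\in\mathrm{Inv}(s_\gamma)\setminus\{\gamma\}$. Given a simple $\alpha$ with $\langle\alpha,\gamma^\vee\rangle>0$, a short sign-coherence argument on the coefficients of $s_\gamma(\alpha)=\alpha-\langle\alpha,\gamma^\vee\rangle\gamma$ (writing $\gamma=\sum c_i\alpha_i$ and comparing the $\alpha$-coefficient with the other entries, which are all non-positive) shows $\alpha\in\mathrm{Inv}(s_\gamma)$; then (a) gives $\alpha\in\mathrm{Inv}(u)$, i.e., $\ell(u)-\ell(us_\alpha)=1$, and when $\gamma\neq\alpha$, (b) forces $\langle\alpha,\gamma^\vee\rangle=1$.

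It remains to verify $\ell(us_\gamma s_\alpha)-\ell(us_\gamma)=1$, i.e.\ $\alpha\notin\mathrm{Inv}(us_\gamma)$. The case $\alpha=\gamma$ is immediate from $us_\gamma(\alpha)=-u(\alpha)\in R^+$. For $\alpha\neq\gamma$, set $\alpha':=-s_\gamma(\alpha)=\langle\alpha,\gamma^\vee\rangle\gamma-\alpha\in R^+$; since $s_\gamma(\alpha')=-\alpha\in -R^+$, one has $\alpha'\in\mathrm{Inv}(s_\gamma)\subseteq\mathrm{Inv}(u)$ by (a), and the first-step description of $\mathrm{Inv}(us_\gamma)$ on $\mathrm{Inv}(s_\gamma)\setminus\{\gamma\}$ then gives $\alpha\notin\mathrm{Inv}(us_\gamma)$. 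The main obstacle I anticipate is the bookkeeping in the length identity and the precise identification of the equality case in the length--coroot inequality, both of which hinge on the absence of fixed points of $\beta\mapsto -s_\gamma(\beta)$ on $\mathrm{Inv}(s_\gamma)\setminus\{\gamma\}$, which in turn uses that a reduced root system contains no root proportional to but distinct from $\pm\gamma$.
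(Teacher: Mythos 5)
Your proof is correct. The paper does not actually present its own proof of this lemma: it cites Lemmas~3.8 and~3.9 of \cite{LeungLi-functorialproperties} as the source, so there is no in-text argument to compare against. Your argument is a clean self-contained alternative built entirely on inversion sets and the orbit structure of $\beta\mapsto s_\gamma(\beta)$ and $\beta\mapsto -s_\gamma(\beta)$ on $R^+$. I checked each step: the identity $\ell(us_\gamma)-\ell(u)=\ell(s_\gamma)-2|\mathrm{Inv}(u)\cap\mathrm{Inv}(s_\gamma)|$ is established correctly by matching $\mathrm{Inv}(us_\gamma)$ against $\mathrm{Inv}(u)$ on the two pieces $A=R^+\setminus\mathrm{Inv}(s_\gamma)$ and $\mathrm{Inv}(s_\gamma)$; the bound $\langle 2\rho,\gamma^\vee\rangle\geq\ell(s_\gamma)+1$ with the stated equality case follows from the orbit-pairing, using that $\langle\beta,\gamma^\vee\rangle\geq 1$ on $\mathrm{Inv}(s_\gamma)$ and that $\gamma$ is the unique fixed point of $-s_\gamma$ on $\mathrm{Inv}(s_\gamma)$ (both of which rely, as you note, on the reducedness of the root system); substituting the hypothesis then squeezes both inequalities to equalities, giving $\mathrm{Inv}(s_\gamma)\subseteq\mathrm{Inv}(u)$ and the value $\langle\beta,\gamma^\vee\rangle=1$ on $\mathrm{Inv}(s_\gamma)\setminus\{\gamma\}$. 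The sign-coherence argument correctly places $\alpha$ in $\mathrm{Inv}(s_\gamma)$ (handling the case $\gamma=\alpha$ directly), from which the three conclusions follow exactly as you say, with the final $\alpha\notin\mathrm{Inv}(us_\gamma)$ read off via the involution $\alpha'=-s_\gamma(\alpha)\in\mathrm{Inv}(s_\gamma)\subseteq\mathrm{Inv}(u)$. This is essentially the classical quantum-Bruhat-graph computation (cf. Brenti--Fomin--Postnikov); it is likely close in spirit to what Leung--Li do, but your packaging of all three conclusions through the single ``squeeze to equality'' is tight and I see no gaps.
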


The next proposition is the generalization of (a special case of) the Key Lemma and Proposition 3.23 of \cite{LeungLi-functorialproperties} to the equivariant quantum cohomology $QH^*_T(G/B)$. 
We would like to remind our readers that a simple root $\beta$ has been fixed in prior.

\begin{prop}\label{propforreduction}
For any  $   1 \leq i\leq n$ and $u\in W$, we have   $F_{\mathbf{a}}\star F_\mathbf{b}\subset F_{\mathbf{a}+\mathbf{b}}$, where
  $\mathbf{a}:=gr_\beta(s_i, \mathbf{0},\mathbf{0})$ and $\mathbf{b}:= gr_\beta(u, \mathbf{0}, \mathbf{0})$.
            Furthermore, we have
         $\overline{\sigma^{s_i}}\star \overline{\sigma^{u}}=\overline{\sigma^{us_i}}$ in $Gr^{\mathcal{F}}(QH^*_T(G/B))$, if
  the following hypotheses $(\diamond)$ hold:   $s_i=s_\beta$ and $u\in W^{P_\beta}$\quad $(\diamond)$.
\end{prop}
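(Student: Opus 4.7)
The plan is to apply the equivariant quantum Chevalley formula of Proposition~\ref{propQChevalley} to $\sigma^{s_i}\star\sigma^u$ and carry out a term-by-term grading comparison. The general inclusion $F_{\mathbf{a}}\star F_{\mathbf{b}}\subset F_{\mathbf{a}+\mathbf{b}}$ then follows by bilinearity, since any spanning element of $F_{\mathbf{a}}$ (resp.\ $F_{\mathbf{b}}$) differs from $\sigma^{s_i}$ (resp.\ $\sigma^u$) only by monomials in the $\alpha_j$'s and $q_\lambda$'s whose gradings add correctly. Explicitly, the Chevalley formula decomposes $\sigma^{s_i}\star\sigma^u$ into an equivariant piece $(\chi_i-u(\chi_i))\sigma^u$, classical pieces $\langle\chi_i,\gamma^\vee\rangle\sigma^{us_\gamma}$ (over $\gamma\in R^+$ with $\ell(us_\gamma)=\ell(u)+1$), and quantum pieces $\langle\chi_i,\gamma^\vee\rangle q_{\gamma^\vee}\sigma^{us_\gamma}$ (over $\gamma\in R^+$ with $\ell(us_\gamma)=\ell(u)+1-\langle 2\rho,\gamma^\vee\rangle$). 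In each of the three cases the two coordinates of the grading sum to $\ell(u)+1$, which equals the coordinate-sum of $\mathbf{a}+\mathbf{b}$, so the entire inequality reduces to bounding each first coordinate by $\mbox{sgn}_\beta(s_i)+\mbox{sgn}_\beta(u)$.

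I would handle the three contributions in turn. For the equivariant term, writing $\chi_i-u(\chi_i)$ as a $\mathbb{Z}$-combination of simple roots (via a reduced word of $u$) makes each summand of the form $c\,\alpha_j\sigma^u$ with first coordinate $\mbox{sgn}_\beta(u)$, automatically within the bound. For classical terms the first coordinate $\mbox{sgn}_\beta(us_\gamma)\leq 1$ could only violate the bound when $\mbox{sgn}_\beta(s_i)=\mbox{sgn}_\beta(u)=0$; in that subcase $i\neq\beta$, $u(\beta)\in R^+$, and $u(\gamma)\in R^+$ (the last from $\ell(us_\gamma)>\ell(u)$), and one verifies directly that $us_\gamma(\beta)=u(\beta)-\langle\beta,\gamma^\vee\rangle u(\gamma)$ remains in $R^+$ by analyzing the possible values of $\langle\beta,\gamma^\vee\rangle$. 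For quantum terms the first coordinate is $\mbox{sgn}_\beta(us_\gamma)+\langle\beta,\gamma^\vee\rangle$; if $\langle\beta,\gamma^\vee\rangle>0$, Lemma~\ref{lemmaUSgamma} applied with $\alpha=\beta$ forces $\ell(u)-\ell(us_\beta)=1=\ell(us_\gamma s_\beta)-\ell(us_\gamma)$, so $\mbox{sgn}_\beta(u)=1$ and $\mbox{sgn}_\beta(us_\gamma)=0$, and moreover $\langle\beta,\gamma^\vee\rangle=1$ unless $\gamma=\beta$; the cases $\langle\beta,\gamma^\vee\rangle\leq 0$ are immediate.

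Under the hypothesis $s_i=s_\beta$ and $u\in W^{P_\beta}$, one has $\mbox{sgn}_\beta(s_\beta)=1$, $\mbox{sgn}_\beta(u)=0$, and $\mathbf{a}+\mathbf{b}=(1,\ell(u))$. The classical contribution for $\gamma=\beta$ is $\langle\chi_\beta,\beta^\vee\rangle\sigma^{us_\beta}=\sigma^{us_\beta}$, whose grading $(1,\ell(u))$ matches $\mathbf{a}+\mathbf{b}$. To conclude $\overline{\sigma^{s_\beta}}\star\overline{\sigma^u}=\overline{\sigma^{us_\beta}}$ in $Gr^{\mathcal{F}}(QH^*_T(G/B))$, I would verify that every other summand sits strictly below $(1,\ell(u))$ in the lexicographic order: the equivariant term has first coordinate $\mbox{sgn}_\beta(u)=0$; for any classical $\gamma\neq\beta$ with $\langle\chi_\beta,\gamma^\vee\rangle\neq 0$, the argument above forces $us_\gamma(\beta)\in R^+$, giving first coordinate $0$; and quantum terms drop below $(1,\ell(u))$ by the same Lemma~\ref{lemmaUSgamma} analysis combined with the minimality $u(\beta)\in R^+$.

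The hardest part will be the root-system bookkeeping for classical terms with $\gamma\neq\beta$: proving that whenever $\ell(us_\gamma)=\ell(u)+1$, $u\in W^{P_\beta}$, and $\langle\chi_\beta,\gamma^\vee\rangle\neq 0$, the element $us_\gamma(\beta)$ remains in $R^+$. This is the step that genuinely couples the minimality condition $u\in W^{P_\beta}$, the Bruhat-cover relation $\ell(us_\gamma)=\ell(u)+1$, and the requirement that $\alpha_\beta^\vee$ appears in the expansion of $\gamma^\vee$; it is where Lemma~\ref{lemmaUSgamma} and a reduced-word argument mirroring the non-equivariant proof in \cite{LeungLi-functorialproperties} become essential, and the remainder of the proof is essentially a careful equivariant adaptation of that earlier combinatorial analysis.
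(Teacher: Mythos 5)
Your overall strategy --- expand $\sigma^{s_i}\star\sigma^u$ via the equivariant quantum Chevalley formula and compare gradings term by term, observing that the coordinate sums of the gradings all equal $\ell(u)+1$ so the lexicographic comparison reduces to the first coordinate --- agrees with the paper's, and your quantum-term analysis using Lemma~\ref{lemmaUSgamma} is correct. The gap lies in the classical terms, and it is precisely here that the paper takes a different and much cleaner route.

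In the subcase $\mbox{sgn}_\beta(s_i)=\mbox{sgn}_\beta(u)=0$, you propose to show directly that $us_\gamma(\beta)=u(\beta)-\langle\beta,\gamma^\vee\rangle\,u(\gamma)$ stays in $R^+$ ``by analyzing the possible values of $\langle\beta,\gamma^\vee\rangle$.'' For $\langle\beta,\gamma^\vee\rangle\leq 0$ this is immediate, but when $\langle\beta,\gamma^\vee\rangle>0$ (with $\gamma\neq\beta$) you are subtracting a positive multiple of $u(\gamma)\in R^+$ from $u(\beta)\in R^+$, and positivity of the result is not automatic; your sketch offers no mechanism for closing this case. The paper sidesteps the root computation entirely: since $s_i,u\in W^{P_\beta}$ in this subcase, both $\sigma^{s_i}$ and $\sigma^u$ are pullbacks under the injective ring homomorphism $H^*(G/P_\beta)\hookrightarrow H^*(G/B)$, hence so is their cup product, which is therefore supported only on classes $\sigma^w$ with $w\in W^{P_\beta}$. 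That disposes of the case in one line.

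Your treatment of the equality case under $(\diamond)$ carries the same gap, compounded: you write that ``the argument above forces $us_\gamma(\beta)\in R^+$,'' but the argument above was made under $i\neq\beta$, which fails under $(\diamond)$ where $s_i=s_\beta$; and since $s_\beta\notin W^{P_\beta}$, the pullback argument does not apply either. The paper instead uses a Bruhat-order argument: if $\mbox{sgn}_\beta(us_\gamma)=1$, write $us_\gamma=vs_\beta$ with $v\in W^{P_\beta}$ and $\ell(v)=\ell(u)$; a nonzero Chevalley coefficient gives $u\leq vs_\beta$ with $\ell(u)=\ell(vs_\beta)-1$, and examining which single-letter deletions from a reduced word of $vs_\beta$ ending in $s_\beta$ can land in $W^{P_\beta}$ forces $u=v$, hence $\gamma=\beta$. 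You would need an argument of this nature to justify the step. Finally, note that Lemma~\ref{lemmaUSgamma} is stated only for $\gamma$ satisfying the quantum length condition $\ell(us_\gamma)=\ell(u)+1-\langle 2\rho,\gamma^\vee\rangle$; your closing paragraph invokes it for the classical terms ($\ell(us_\gamma)=\ell(u)+1$), where it does not apply. Its correct use --- which you do make --- is confined to the quantum-term analysis.
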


  \begin{proof}
  Write $\sigma^{s_i}\star \sigma^{u}=\sum c_{w, I,  \lambda}\sigma^w\alpha^Iq_\lambda$, and denote $\mathbf{d}:=gr_\beta(w, I, \lambda)$.
  The statement to prove is equivalent to the following:
  \begin{enumerate}
    \item[i)]  $\mathbf{d}\leq \mathbf{a}+\mathbf{b}$ whenever the coefficient $c_{w, I, \lambda}$ does not vanish;
    \item[ii)] under the additional hypotheses $(\diamond)$, $\mathbf{d}=\mathbf{a}+\mathbf{b}$ if and only if  $\sigma^w\alpha^Iq_\lambda=\sigma^{us_i}$.
  \end{enumerate}

   Note    $c_{w, I,  \lambda}\neq 0$ only if
             $|\mathbf{d}|= \ell(w)+|I|+\langle 2\rho, \lambda\rangle=1+\ell(u)=|\mathbf{a}|+|\mathbf{b}|$.
    Thus  for nonzero  $c_{w, I,  \lambda}$, $\mathbf{d} $ is less than  (resp. equal to) $\mathbf{a}+\mathbf{b}$   if and only if   $d_1$ is less than (reps. equal to) $a_1+b_1$, where  $\mathbf{a}=(a_1, a_2)$, $\mathbf{b}=(b_1, b_2)$ and $\mathbf{d}=(d_1, d_2)$.
  Note  $a_1+b_1=\mbox{sgn}_\beta(s_i)+\mbox{sgn}_\beta(u)$ and $d_1=\mbox{sgn}_\beta(w)+\langle \beta, \lambda\rangle$.
   Due to Proposition \ref{propQChevalley},  if   $c_{w, I,  \lambda}\neq 0$, then  one of the following cases must hold.
   \begin{enumerate}
       \item $\sigma^w\alpha^Iq_\lambda =\sigma^u\alpha_j$ for some $1\leq j\leq n$, which  must come from  $(\chi_i-u(\chi_i))\sigma^u$. Clearly,
              $d_1=\mbox{sgn}_\beta(u)=b_1\leq a_1+b_1$, and `$<$" holds if we assume $(\diamond)$.
       \item $\sigma^w\alpha^Iq_\lambda=\sigma^{us_\gamma}$ with $\ell(us_\gamma)=\ell(u)+1$. If either of $a_1, b_1$ is nonzero, then $d_1\leq 1\leq a_1+b_1$. Otherwise,  we have $a_1=b_1=0$, i.e., $s_i, u\in W^{P_\beta}$. Due to the canonical injective morphism $H^*(G/P_\beta)\hookrightarrow H^*(G/B)$, $\sigma^{us_\gamma}$ occurs in
             $\sigma^u\cup\sigma^v\in H^*(G/B)\subset QH^*_T(G/B)$ only if $us_\gamma$ lies in $W^{P_\beta}$ as well, i.e., $d_1=0$. Thus $d_1\leq a_1+b_1$. Furthermore we assume $(\diamond)$, then $d_1=a_1+b_1$ only if $us_\gamma=vs_i$ with $\ell(v)=\ell(u)$;
               $c_{vs_i, \mathbf{0},\mathbf{0}}\neq 0$ implies that $u\leq vs_i$ with respect to the Bruhat order, i.e., $u$ is obtained by deleting a simple reflection from a reduced expression of $vs_i$, which implies $u=v$. Thus if both $(\diamond)$ and $d_1=a_1+b_1$ hold, then $us_\gamma=us_i$ and   $c_{us_i, \mathbf{0},\mathbf{0}}=\langle \chi_i, \alpha_i^\vee\rangle=1$.

       \item $\sigma^w\alpha^Iq_\lambda=\sigma^{us_\gamma}q_{\gamma^\vee}$ with $\ell(us_\gamma)=\ell(u)+1-\langle 2\rho, \gamma^\vee\rangle$.
       Furthermore, we have  $a_1=1$, assuming   $(\diamond)$.
           \begin{enumerate}
             \item If $\langle \beta, \gamma^\vee\rangle<0$, then $d_1\leq 0\leq a_1+b_1$, and ``$<$" holds if we assume $(\diamond)$.
             \item  If $\langle \beta, \gamma^\vee\rangle=0$, then $us_\gamma(\beta)=u(\beta)$, which  implies $d_1=\mbox{sgn}_\beta(us_\gamma)=\mbox{sgn}_\beta(u)=b_1 \leq a_1+b_1$, and ``$<$" holds  if we assume  $(\diamond)$.
             \item If   $\langle \beta, \gamma^\vee\rangle>0$, then
                            $\mbox{sgn}_\beta(u)=1$ and $\mbox{sgn}_\beta(us_\gamma)=0$ by Lemma \ref{lemmaUSgamma}. If
                             $\gamma\neq \beta$, then   $d_1=\langle \beta, \gamma^\vee\rangle=1=b_1\leq a_1+b_1$, and "$<$" holds if we assume $(\diamond)$. If $\gamma=\beta$, then $\langle \chi_i, \gamma^\vee\rangle\neq 0$ implies that $\alpha_i=\beta$, and consequently
                              $d_1=2=1+1=a_1+b_1$. Since  $\mbox{sgn}_\beta(u)=1$, $u\not\in W^{P_\beta}$. Hence,
                           the hypotheses $(\diamond)$ cannot hold  in this case.
           \end{enumerate}
   \end{enumerate}
          Hence, the statement follows.
  \end{proof}
\begin{remark}
 The main body of \cite{LeungLi-functorialproperties} is devoted to a complicated proof of  the Key Lemma therein   with respect to a general  $P$. The above proposition can be obtained as an easy consequence. Nevertheless,  we  provide a detailed proof for $P=P_{\beta}$  for both the sake of completeness and the purpose of  exposition of the Key Lemma.
\end{remark}

\subsubsection{Proof of Theorem  \ref{thmfiltration}}

    For the first  statement, it  suffices to show
                    $\sigma^w\star  \sigma^u\alpha^Iq_\lambda \in F_{\mathbf{a}+\mathbf{b}}$,
                        for any $\sigma^w, \sigma^u\alpha^Iq_\lambda\in QH^*_T(G/B)$ with $\mathbf{a}=gr_\beta(w,\mathbf{0},\mathbf{0})$ and $\mathbf{b}=gr_\beta(u, I, \lambda)$. Clearly, it holds when   $\ell(w)=0$, for which $\sigma^w=\sigma^{\scriptsize\mbox{id}}$ is the unit in $ QH^*_T(G/B)$.
           We use induction on $\ell(w)$.
            If $\ell(w)=1$, then $w=s_i$ and it is done by Proposition \ref{propforreduction}.
        Assume $\ell(w)>1$ now.
      Take  $v\in W$ and  $i\in\{1,\cdots, n\}$, such that  $gr_\beta(w, \mathbf{0}, \mathbf{0})=gr_\beta(v, \mathbf{0},\mathbf{0})+gr_\beta(s_i,\mathbf{0},\mathbf{0})$ and that the coefficient of $\sigma^w$ in the cup product $\sigma^v\cup \sigma^{s_i}$ is nonzero. \big(Precisely, if
          $\mbox{sgn}_\beta(w)=1$, then we take $s_i=s_\beta$ and $v=ws_\beta$.  If
          $\mbox{sgn}_\beta(w)=0$, then we write $w=s_jv\in W^{P_\beta}$ with $\ell(v)=\ell(w)-1$, and simply take $\alpha_i\in \Delta\setminus\{\beta\}$  such that $\langle \chi_i, \gamma^\vee\rangle >0$, which exists by noting   $\gamma:=v^{-1}(\alpha_j)\neq \beta$.\big)
     By the induction hypothesis, we have $$\sigma^{s_i}\star(\sigma^{v}\star\sigma^u\alpha^Iq_\lambda) \in \sigma^{s_i}\star F_{gr_\beta(v, \mathbf{0},\mathbf{0})+\mathbf{b}}
         \subset  F_{\mathbf{a}+\mathbf{b}}.$$ On the other hand, we have
         $$(\sigma^{s_i}\star \sigma^{v})\star \sigma^u\alpha^Iq_\lambda=(\langle \chi_i, \gamma^\vee\rangle \sigma^w+\sum c^{s_i, v}_{w', I', \lambda'}\sigma^{w'}\alpha^{I'}q_{\lambda'} )\star \sigma^u\alpha^Iq_\lambda$$ with
         $\langle \chi_i, \gamma^\vee\rangle>0$ and all the coefficients $c^{s_i, v}_{w', I', \lambda'}\geq0$. There will be no cancelation, when we expand the product, due to the positivity (see Proposition \ref{propPositivity}).
             Hence, we conclude $\sigma^w    \star \sigma^u\alpha^Iq_\lambda\in F_{\mathbf{a}+\mathbf{b}}$.

             It follows directly from Definition \ref{defgradingequiv} that there is a unique term $\sigma^u\alpha^Iq_\lambda$ of grading $(m, 0)$ for every nonnegative integer $m$. It is given by $q_{\beta^\vee}^{m\over 2}$ if $m$ is even, or $\sigma^{s_\beta}q_{\beta^\vee}^{m-1\over 2}$ otherwise.
             By Proposition \ref{propQChevalley}, $$\sigma^{s_\beta}\star\sigma^{s_\beta}=q_{\beta^\vee}+\beta \sigma^{s_\beta}+\sum \langle \chi_\beta, s_\beta(\alpha^\vee)\rangle \sigma^{s_\alpha s_\beta},$$
             the summation   over those simple roots  $\alpha$   adjacent to $\beta$ in the Dynkin diagram of $\Delta$.
              Thus we have $\overline{\sigma^{s_\beta}}\star\overline{\sigma^{s_\beta}}=\overline{q_{\beta^\vee}}$   in $Gr^{\mathcal{F}}(QH^*_T(G/B))$. That is $\Psi_{\scriptsize\mbox{ver}}^\beta$ is an isomorphism of (graded) algebras (with respect to the given gradings on both sides).

As remarked earlier,   Lemma \ref{lemmalifting} implies that   $\Psi_{\scriptsize\mbox{hor}}^\beta$ is an    injective morphism    of $S$-modules. For any $\sigma^{w'}\alpha^Iq_{\lambda}$ in $QH^*_T(G/B)$ of grading $(0, *)$, we have
     $\mbox{sgn}_\beta(w')+\langle \beta,\lambda\rangle=0$. This implies that $w:=w'\in W^{P_\beta}$ if $\langle \beta, \lambda\rangle =0$, or
        $w:=w's_\beta\in W^{P_\beta}$ and $\langle \beta, \lambda\rangle =-1$ otherwise.  Denote $\lambda_P:=\lambda+\mathbb{Z}\beta^\vee \in Q^\vee/Q^\vee_{P_\beta}$. Then   $\sigma^{w'}\alpha^Iq_{\lambda}=\psi_\beta(\sigma^w\alpha^Iq_{\lambda_P})$. Thus $\Psi_{\scriptsize\mbox{hor}}^\beta$ is a bijection. Let $u, v\in W^{P_\beta}$. Consequently, we have  $\Psi_{\scriptsize\mbox{hor}}^\beta(\sigma^u \star_{P_\beta} \sigma^v)=
                    \Psi_{\scriptsize\mbox{hor}}^\beta(\sigma^u)\star  \Psi_{\scriptsize\mbox{hor}}^\beta(\sigma^v)$, by Proposition \ref{propcomparison}.
   For any $\mu_P\in Q^\vee/Q^\vee_{P_\beta}$,   $\Psi_{\scriptsize\mbox{hor}}^\beta(q_{\mu_P})$ equals $q_{\mu_B}$ if $\langle\beta, \mu_B\rangle=0$, or   $\sigma^{s_\beta}q_{\mu_B}$ otherwise. Thus
           $ \Psi_{\scriptsize\mbox{hor}}^\beta(q_{\lambda_P}\star_{P_\beta}q_{\mu_P})= \Psi_{\scriptsize\mbox{hor}}^\beta(q_{\lambda_P})\star  \Psi_{\scriptsize\mbox{hor}}^\beta(q_{\mu_P})$. By Proposition \ref{propforreduction},
        $\Psi_{\scriptsize\mbox{hor}}^\beta(q_{\mu_P} \star_{P_\beta} \sigma^v)
                  = \Psi_{\scriptsize\mbox{hor}}^\beta(q_{\mu_P})\star \Psi_{\scriptsize\mbox{hor}}^\beta(\sigma^v)$.
       Hence,   $\Psi_{\scriptsize\mbox{hor}}^\beta$ is an   isomorphism of $S$-algebras.

\subsubsection{Proof of Theorem \ref{thmforQtoC}}
     The first half  is a direct consequence of Theorem \ref{thmfiltration}.

  Now we assume the hypothesis in the second half of the statement, and consider the $\mathbb{Z}^2$-filtration $\mathcal{F}$ on $QH^*_T(G/B)$ with respect to  $\beta:=\alpha_k$.
    Write
        $$\sigma^u\star\sigma^v=\sum_{w, \lambda}N_{u, v}^{w, \lambda}\sigma^wq_\lambda=\sum_{w, I, \lambda}c_{w, I, \lambda}\sigma^w\alpha^Iq_\lambda$$ where $N_{u, v}^{w, \lambda}=\sum_Ic_{w, I, \lambda} \alpha^I$ is nonzero only if $|I|=\ell(u)+\ell(v)-\ell(w)-\langle 2\rho, \lambda\rangle\geq 0$.
    Thus in  $Gr^\mathcal{F}(QH^*_T(G/B))$, we have
     $$\overline{\sigma^u}\star\overline{\sigma^v}=\sum c_{w, I, \lambda}\overline{\sigma^w\alpha^Iq_\lambda}$$ with $\mbox{sgn}_\beta(w)+\langle \beta,\lambda\rangle=\mbox{sgn}_\beta(u)+\mbox{sgn}_\beta(v)=2$.
  Since $\mbox{sgn}_\beta(W)=\{0, 1\}$,
  $\mbox{sgn}_\beta(u)=\mbox{sgn}_\beta(v)=1$. Thus $u':=us_\beta$, $v':=vs_\beta$ are both in $W^{P_\beta}$.
By Proposition \ref{propforreduction}, 
  $$\overline{\sigma^{u'}\star \sigma^{s_\beta}}=\overline{ \sigma^u}\in Gr^\mathcal{F}_{gr_\beta(u,\mathbf{0},\mathbf{0})}
       \quad\mbox{and}\quad\overline{\sigma^{v'}\star \sigma^{s_\beta}}=\overline{ \sigma^v}\in Gr^\mathcal{F}_{gr_\beta(v, \mathbf{0},\mathbf{0})}.$$
  Since  the graded algebra    $Gr^\mathcal{F}(QH^*_T(G/B))$ is     associative and commutative,
      $$\overline{\sigma^u}\star\overline{\sigma^v}=\big(\overline{\sigma^{u'}}\star \overline{\sigma^{v'}}\big)\star \big(
    \overline{\sigma^{s_\beta}}\star \overline{ \sigma^{s_\beta}}\big)=
     \Psi_{\scriptsize\mbox{hor}}^\beta(\sigma^{u'}\star_{P_\beta}\sigma^{v'})\star\, \overline{ q_{\beta^\vee}}, $$
 following from   Theorem \ref{thmfiltration}. In  $QH^*_T(G/P_\beta)$, we write
        $$\sigma^{u'}\star_{P_\beta}\sigma^{v'}=\sum_{w', \lambda_{P_\beta}}N_{u', v'}^{w', \lambda_{P_\beta}}\sigma^{w'}q_{\lambda_{P_\beta}}=\sum_{w',I',  \lambda_\beta}\tilde c_{w', I', \lambda_{P_\beta} }\sigma^{w'}\alpha^{I'}q_{\lambda_{P_\beta}}.$$
 Then we have
         $$\overline{\sigma^u}\star\overline{\sigma^v}=\sum \tilde c_{w',I', \lambda_{P_\beta}} \overline{\psi_{\beta}(\sigma^{w'}q_{\lambda_{P_\beta}})\alpha^{I'} q_{\beta^\vee}}$$
  Hence,    the second half of the statement follows, by comparing   coefficients of both expressions of   $\overline{\sigma^u}\star\overline{\sigma^v}$.

  Indeed, we note    $\langle \beta, \lambda-\beta^\vee\rangle=-\mbox{sgn}_\beta(w)\in\{0, -1\}$. It follows that  the Peterson-Woodward lifting of  $\lambda_{P_\beta}:=\lambda+Q^\vee_{P_\beta}$ is given by $\lambda_B=\lambda-\beta^\vee$.
 Set $w':=w$ if $\mbox{sgn}_\beta(w)=0$, or $ws_\beta$ if $\mbox{sgn}_\beta(w)=1$.
 Then       $\psi_{\beta}(\sigma^{w'}q_{\lambda_{P_\beta}})q_{\beta^\vee}=\sigma^wq_{\lambda}$, and consequently
 $c_{w, I, \lambda}=\tilde c_{w', I, \lambda_{P_\beta}}$ for all $I$. Hence, by Proposition \ref{propcomparison},
     $$N_{u, v}^{w, \lambda}=\sum_Ic_{w, I, \lambda}\alpha^I=\sum_I\tilde c_{w', I, \lambda_{P_\beta}}\alpha^I=N_{u', v'}^{w', \lambda_{P_\beta}}=N_{u',v'}^{w, \lambda-\beta^\vee}.$$
    To show the remaining identities, we   
consider  the expansion
  \begin{align*}
      \overline{ \sigma^u}\star\overline{ \sigma^v}
                                 =(\overline{\sigma^{u}} \star\overline{\sigma^{v'}})\star \overline{\sigma^{s_\beta}}
                 & =\overline{\sum N_{u, v'}^{\hat w, \hat \lambda}q_{\hat  \lambda}\sigma^{\hat w}}\star \overline{\sigma^{s_\beta}}\\
                & =\sum \hat c_{\hat w, I, \hat \lambda}\overline{\sigma^{\hat w s_\beta}\alpha^Iq_{\hat  \lambda}}+
                   \sum \hat c_{\hat w, I, \hat \lambda}\overline{\sigma^{\hat w s_\beta}\alpha^Iq_{\hat  \lambda +\beta^\vee}},
  \end{align*}
  where   $\mbox{sgn}_\beta(\hat w)+\langle \beta, \hat \lambda\rangle =1$
  and $\mbox{sgn}_\beta(\hat w)=0$ (resp. $1$) hold in the former (resp. latter) summation.
       Hence, if $\mbox{sgn}_\beta(w)=0$, then for every $I$
   we have $ c_{w, I,  \lambda}\overline{\sigma^w\alpha^Iq_{\lambda} }=\hat c_{\hat w, I, \hat \lambda} \overline{\sigma^{\hat w s_\beta}\alpha^Iq_{\hat  \lambda +\beta^\vee}} $
   for a unique term in the latter  summation, i.e., for $(\hat w, I, \hat \lambda)=(ws_\beta, I, \lambda-\beta^\vee)$. Thus
    $N_{u, v}^{w, \lambda}=\sum_Ic_{w,  I, \lambda}\alpha^I=\sum_I\hat c_{ws_\beta, I, \lambda-\beta^\vee}\alpha^I=N_{u, vs_\beta}^{ws_\beta, \lambda-\beta^\vee}$.
   If $\mbox{sgn}_\beta(w)=1$, then for every $I$
   we have $ c_{w, I,  \lambda}\overline{\sigma^w\alpha^Iq_{\lambda}}= \hat c_{\hat w, I, \hat \lambda}  \overline{\sigma^{\hat w s_\beta}\alpha^Iq_{\hat  \lambda  }}$
   for a unique term in the  former summation, i.e., for $(\hat w, I, \hat \lambda)=(ws_\beta,I, \lambda)$. In this case,
     $N_{u, v}^{w, \lambda}=\sum_Ic_{w,  I, \lambda}\alpha^I=\sum_I\hat c_{ws_\beta, I, \lambda}\alpha^I=N_{u, vs_\beta}^{ws_\beta, \lambda}$.

 \section{Application: an equivariant quantum Pieri rule for  $F\ell_{n_1, \cdots, n_k; n+1}$ 
 }

Throughout the rest of the present paper, we let $G=PSL(n+1, \mathbb{C})$, which is the quotient group of $\tilde G=SL(n+1, \mathbb{C})$ by its center $Z(\tilde G)$. We make the Dynkin diagram of $\Delta$  in the standard way:
    \begin{tabular}{l} \raisebox{-0.4ex}[0pt]{$  \circline\!\;\!\!\circ\cdots\, \circline\!\!\!\;\circ $}\\
                 \raisebox{1.1ex}[0pt]{${\hspace{-0.2cm}\scriptstyle{\alpha_{1}}\hspace{0.3cm}\alpha_{2}\hspace{0.8cm}\alpha_{n} } $}
  \end{tabular}\! . The standard Borel subgroup  $B$ of $G$ is the quotient of the subgroup   of upper triangular matrices in $\tilde G$ by $Z(\tilde G)$. Each proper parabolic subgroup $P\supset B$ is in one-to-one correspondence with a proper subset $$\Delta_P=\Delta\setminus \{\alpha_{n_1}, \cdots, \alpha_{n_k}\},\quad\mbox{where}\quad n_0:=0< n_1<n_2<\cdots <n_k< n+1=:n_{k+1}.$$ Then
     $F\ell_{n_1, \cdots, n_k; n+1}:=PSL(n+1, \mathbb{C})/P$
     parameterizes partial flags in $\mathbb{C}^{n+1}$:
     $$F\ell_{n_1, \cdots, n_k; n+1}=\{V_{n_1}\leqslant \cdots  \leqslant V_{n_{k}}\leqslant \mathbb{C}^{n+1}~|~\dim_\mathbb{C} V_{n_i}=n_i, i=1, \cdots, k\}.$$
For each $i$,
we denote by
$\pi_i: F\ell_{n_1, \cdots, n_k; n+1}\rightarrow Gr(n_i, n+1)$ the natural projection.
The equivariant quantum cohomology ring   $QH^*_T(F\ell_{n_1, \cdots, n_k; n+1})$  is generated (see e.g. \cite{AnCh,LaSh-QDSP}) by  \textit{special} Schubert classes  $\sigma^{c[n_i, p]}$ {where}
$$   c[n_i, p]:=s_{n_i-p+1}\cdots s_{n_i-1}s_{n_i}.$$
In this section,  we will show  an equivariant quantum Pieri rule for $F\ell_{n_1, \cdots, n_k; n+1}$, giving   the equivariant quantum multiplication by    $\sigma^{c[n_i, p]}$.

 \subsection{Equivariant quantum Pieri rule}\label{subsecequivFlagAtype}
In order to state  the formula, we need  to   introduce  some   notions, mainly following    \cite{Robinson,Buch-PartialFlag,CFon}.

  The Weyl group $W$ for $PSL(n+1, \mathbb{C})$ is   isomorphic to the permutation group $S_{n+1}$, by mapping each simple reflection $s_i$ to the transposition $(i(i+1))$.
In particular, each reflection $s_\gamma$ from a positive root   $\gamma=\alpha_i+\alpha_{i+1}+\cdots +\alpha_j$
       is   sent to the transposition $(i(j+1))$, where $1\leq i\leq j\leq n$. Furthermore,
 Schubert classes $\sigma^w$ in $QH^*_T(F\ell_{n_1, \cdots, n_k; n+1})$ are indexed by $w\in W^P$ with $$W^P=\{w\in S_{n+1}~|~ w(n_{i-1}+1)<w(n_{i-1}+2)<\cdots <w(n_i), i=1, \cdots, k+1\}.$$
For   $Gr(m, n+1)=F\ell_{m; n+1}$, we have a bijection $\varphi_{m}:W^{P}\overset{\simeq}{\longrightarrow}\mathcal{P}_{m, n+1}$ to the partitions $$\mathcal{P}_{m, n+1}:=\{(a_1, \cdots, a_m)\in \mathbb{Z}^m~|~ n+1-m\geq a_1\geq a_2\geq \cdots\geq a_m\geq 0\},$$
   \begin{align}\label{eqn-WPpartition}
      w\mapsto \varphi_m(w)=(w(m)-m, \cdots, w(2)-2, w(1)-1).
   \end{align}
 We simply call  such  $w$   an \textit{$m$-th Grassmannian permutation}, whenever  $n+1$ is well understood.
 Set $\mathcal{P}_{0, n+1}:=\{(0)\}$.
Review that the length of $u\in W=S_{n+1}$ is  given by
   $$\ell(u)=|\{(i, j)~|~ 1\leq i<j\leq n+1 \mbox{ and } u(i)>u(j)\}|.$$

\begin{defn}
   Let $\zeta
      =(ri_p\cdots i_2i_1)$ be a $(p+1)$-cycle in $W$. For any $u\in W$, we say that $u\zeta$ is special $j$-superior to $u$ of degree $p$ if all the following hold:
   $$(1)\,\, i_1, \cdots, i_p\leq j<r, \quad (2)\,\, u(r)>u(i_1)>\cdots > u(i_p), \quad (3)\,\, \ell(u\zeta)=\ell(u)+p.$$
More generally, if $\zeta_1, \cdots, \zeta_d$ are pairwise disjoint cycles such that each $u\zeta_s$ is special $j$-superior to $u$ of degree $p_s$ and $\sum_{s=1}^{d}p_s=p=\ell(u\zeta_1\cdots \zeta_d)-\ell(u)$, then we say that $u\zeta_1\cdots \zeta_d$ is \textbf{special $j$-superior to $u$ of degree $p$}, and denote
   $$S_{j, p}(u):=\{w\in W~|~ w \mbox{ is special } j\mbox{-superior to } u \mbox{ of degree } p\}.$$

Furthermore for  $w=u\zeta_1\cdots \zeta_d\in S_{j, p}(u)$ above, we sort  the values  $$\{u(1),\cdots, u(j)\}\setminus \{u(i)~|~  i\mbox{ occurs in some } \xi_s\}$$ to get a decreasing sequence  $[\mu_1+{j-p},  \cdots, \mu_{j-p-1}+2,  \mu_{j-p}+1]$, and then obtain
  an associated partition
    $$\mu_{w, u, j}:=(\mu_1, \mu_2, \cdots, \mu_{j-p})\in  \mathcal{P}_{j-p, n+1}.$$
  We denote  the set of such associated partitions as
    $$\mathcal{P}S_{j, p}(u):=\{\mu_{w, u, j}~|~ w \in S_{j, p}(u)\}\subset \mathcal{P}_{j-p, n+1}.$$
\end{defn}
\begin{remark}
 {\upshape
 If $p=j$, then $\mu_{w, u, j}=(0)$ is   the zero partition.
 }
\end{remark}

\begin{example} {\upshape For $PSL(7, \mathbb{C})/P=Fl_{2, 4; 7}$, we take the same $u=[3715246]\in W^P$ in  one-line notation as in Example 2 of \cite{Buch-PartialFlag}.
   Since   $w:=[4725136]=u(35)(16)$ is in $S_{4, 2}(u)$,  $1, 3, 5, 6$ are the indices occurring in $u^{-1}w$. Sorting
      $\{u(1), \cdots, u(4)\}~\setminus~$ $\{u(1), u(3), u(5), u(6)\}$, we obtain a decreasing sequence $[7, 5]$. Hence, the associated partition is given by
     $\mu_{w, u, 4}=(7-2, 5-1)=(5, 4)\in \mathcal{P}_{2, 7}$, which   corresponds to the $2$nd Grassmannian permutation $[5712346]$ for $Gr(2, 7)$.
     }
 \end{example}

So far there have been no manifestly positive formulas for general structure coefficients $N_{u, v}^{w, 0}$ of an   equivariant product $\sigma^{u}\circ \sigma^v$ of $H^*_T(G/P)$ except for the case of complex Grasssmannians and two-step flag varieties. However, there does be one  for the special case   $N_{w, v}^{w, 0}$ (i.e., when $u=w$)   in terms of a linear combination of products of positive roots
   \cite{KoKu,Billey} (for $G$ of general Lie type). Geometrically, $G/P$ has finitely many $T$-fixed points parameterized by the minimal length representatives in $W^P$. We let $\iota_w: \mbox{pt}\to G/P$ denote the natural inclusion of the $T$-fixed point labeled by $w$ into $G/P$. Then we have $N_{w, v}^{w, 0}=\iota_w^*(\sigma^v)\in H^*_T(\mbox{pt})$,  localizing  the equivariant Schubert class $\sigma^v$ at such $T$-fixed point. We will reduce all the relevant coefficients in our equivariant quantum Pieri rule to such kind of coefficients for $G/P=Gr(m, n+1)$ with $v=1^p$ being a special partition, for which there are  much more manifestly positive formulas. With the partitions in $\mathcal{P}_{m, n+1}$, we  give a precise description of $\xi^{m, p}(\mathbf{a}):=N_{\mathbf{a}, 1^p}^{\mathbf{a}, 0}$  following \cite{KoKu,Billey}.
\begin{prop-defn}\label{defnKosPolyforCoeff}
  Let $0\leq m\leq n$ and $\mathbf{a}=(a_1, \cdots, a_m)\in \mathcal{P}_{m, n+1}$. Denote by  $\mathbf{a}^T=(a_1^T, \cdots, a_{n+1-m}^T)\in \mathcal{P}_{n+1-m, n+1}$   the transpose of the partition $\mathbf{a}$. Then $s_{i_1}s_{i_2}\cdots s_{i_{|\mathbf{a}|}}$ gives a reduced expression of $\varphi_m^{-1}(\mathbf{a})\in W$,  where $|\mathbf{a}|=\sum_{s=1}^ma_s$ and
  \begin{align*}
     [i_1, \cdots, i_{|\mathbf{a}|}]:&=[n-a_{n+1-m}^{T}+1, n-a_{n+1-m}^T+2, \cdots, n;\\
                                     &\quad\,\,\, (n-1)-a_{n-m}^T+1, (n-1)-a_{n-m}^T+2,\cdots, n-1; \\
                                     &\quad\,\,\, \cdots;\\
                                     &\quad\,\,\, m-a_1^T+1, m-a_1^T+2,\cdots, m].
  \end{align*}
  Consequently,      $\gamma_b:=s_{i_1}\cdots s_{i_{b-1}}(\alpha_{i_b})$ is a positive root for any    $1\leq b\leq |\mathbf{a}|$.
  Furthermore, we have $\xi^{m, 0}(\mathbf{a})=1$; for  $1\leq p\leq m$,   the structure coefficient $\xi^{m, p}(\mathbf{a})=N_{\mathbf{a}, 1^p}^{\mathbf{a}, 0}$   is
  a homogeneous polynomial of degree $p$ in $\mathbb{Z}_{\geq 0}[\alpha_1, \cdots, \alpha_n]$ given by
         $$\xi^{m, p}(\mathbf{a}) =\sum \gamma_{j_1}\cdots \gamma_{j_p},$$
where  the sum is   over all subsequences $1\leq j_1<\cdots <j_p\leq  |\mathbf{a}|$ that satisfy   $[i_{j_1}, \cdots, i_{j_p}]=[m-p+1, m-p+2, \cdots, m]$.

\end{prop-defn}

\begin{remark}\label{rmkXivanish}
 {\upshape  
 If $p>a_1^T$, then $\xi^{m, p}(\mathbf{a})=0$,   for which there does not exist     $[j_1, \cdots, j_p]$ satisfying the constraint.
 }
\end{remark}
\begin{example}
{\upshape Let $n=6$. For  $\mathbf{a}:=(5,4)\in \mathcal{P}_{2, 7}$,   we have $\mathbf{a}^T=(2,2,2,2,1)\in\mathcal{P}_{5, 7}$  and
   $[i_1, \cdots, i_{|\mathbf{a}|}]=[6,4,5,3,4,2,3,1,2]$. Hence,
    \begin{align*}
       \xi^{2, 1}(\mathbf{a})&=s_6s_4s_5s_3s_4(\alpha_2) + s_6s_4s_5s_3s_4s_2s_3s_1(\alpha_2)=\alpha_1+2\alpha_2+2\alpha_3+2\alpha_4+\alpha_5+\alpha_6,\\
       \xi^{2, 2}(\mathbf{a})&=s_6s_4s_5s_3s_4s_2s_3(\alpha_1)\cdot s_6s_4s_5s_3s_4s_2s_3s_1(\alpha_2)=(\alpha_1+\cdots+\alpha_4)(\alpha_1+\cdots+\alpha_6).
    \end{align*}
}
\end{example}

\begin{defn}\label{defnPieriSeq}
  Let $u\in W^P$, $1\leq i\leq k$ and $1\leq p\leq n_i$.  We denote by
     {\upshape $\mbox{Pie}_{i, p}(u)$} 
   the set of elements  $\mathbf{d}=(d_1, \cdots, d_k)$ with $[d_0, d_1, \cdots, d_k, d_{k+1}]$   being of the form
     $$[\underbrace{0,\cdots, 0},\underbrace{1,\cdots,  1},  \underbrace{2,\cdots, 2},\cdots, \underbrace{m, \cdots  m},\cdots,\underbrace{2,\cdots,2},\underbrace{1,\cdots, 1}, \underbrace{0,\cdots, 0}]$$
     that satisfy both $d_i=m\leq p$ and the next property 
       {\upshape $$
             (*):\quad u(n_{h_j})>\max\{u(r)~|~ n_{h_j}+1\leq r  \leq   n_{l_j+1}\}\mbox{ \textit{for all} } 1\leq j\leq m.$$}
   \noindent Here $1\leq h_1<\cdots<h_m\leq l_m<\cdots<l_1\leq k$ denote all the jumps, namely  $d_{h_j}=d_{l_j}=j$ and $d_{h_{j}-1}=d_{l_j+1}=j-1$ for all $1\leq j\leq m$.
  (Note $d_0=d_{k+1}=0$.)

  Given the above $\mathbf{d}$, we denote by
      $\tau_\mathbf{d}\in S_{n+1}$   the unique permutation   defined by
         $$\tau_\mathbf{d}(n_{l_j+1}-j+1)=n_{h_j}, \,\,\,j=1, \cdots, m,$$
      together with the property that the restriction of $\tau_{\mathbf{d}}$ on the remaining elements $
      \{1, \cdots, n+1\}\setminus     \{n_{l_j+1}-j+1~|~ 1\leq j\leq m\}$   preserves the usual order.
     Similarly, we denote by    $\phi_\mathbf{d}\in S_{n+1}$   the unique permutation  given by
          $$\phi_\mathbf{d}(n_{l_j}-j+1)=n_{h_{j}-1}+1,\,\,\,\, j=1, \cdots,  m,$$
    together with the property that
    $\phi_{\mathbf{d}}\big|_{\{1, \cdots, n+1\}\setminus   \{n_{l_j}-j+1~|~ 1\leq j\leq m\}}$   preserves the usual order. In addition, we
      denote 
       {\upshape $$\mbox{Per}(\mathbf{d}):=\{w\in W^P~|~ w(n_{h_{j}-1}+1)<\min\{w(r)~|~ n_{h_{j}-1}+2\leq r\leq n_{l_j}+1\},\,\, \forall\, j\}.$$}
 \end{defn}

The permutations
  $\tau_\mathbf{d}$ and $\phi_\mathbf{d}$ \footnote{$\tau_{\mathbf{d}}$ coincides with the permutation $\gamma_d$ in \cite{Buch-PartialFlag}. With notations in \cite{CFon}, $\tau_\mathbf{d}=\gamma_{\mathbf{hl}}$ and $\phi_{\mathbf{d}}=\delta_{\mathbf{hl}}^{-1}$.} can be expressed in terms of products of simple reflections (\cite{Buch-PartialFlag,CFon}): 
  $\tau_\mathbf{d}=\tau^{(m)}\cdots \tau^{(1)}$ and $\phi_\mathbf{d}=\phi^{(m)}\cdots \phi^{(1)}$, where
         $$\tau^{(j)}=s_{n_{h_j}}\cdots s_{n_{l_j+1}-2}s_{n_{l_j+1}-1}, \qquad \phi^{(j)}=s_{n_{h_j-1}+1}\cdots s_{n_{l_j}-2}s_{n_{l_j}-1},$$
for each    $1\leq j\leq m$.
  Moreover, the above expressions are reduced, implying
   $$\ell(\tau_\mathbf{d})=\sum_{j=1}^m (n_{l_j+1}-n_{h_j})\quad\mbox{and}\quad \ell(\phi_\mathbf{d})=-m+\sum_{j=1}^m (n_{l_j}-n_{h_j-1}).$$

 \begin{remark}
 {\upshape  For $\mathbf{d}\in \mbox{Pie}_{i, p}(u)$, $d_i=m<{k\over 2}+1$. When $m=0$, we have   $\mathbf{d}=\mathbf{0}\in \mbox{Pie}_{i, p}(u)$, $\tau_{\mathbf{0}}=\phi_{\mathbf{0}}=\mbox{id}\in S_{n+1}$ and $\mbox{Per}( \mathbf{0})=W^P$.
 }
 \end{remark}
 \begin{example}\label{exampleproduct}
    {\upshape Let $PSL(7, \mathbb{C})/P=Fl_{2, 4; 7}$. Take $u=[3715246]\in W^P$, $i=2$, $p=3$. Then $\mathbf{d}\in \mbox{Pie}_{2, 3}(u)$ only if
      $\mathbf{d}=(0, 0), (0,1)$ or $(1, 1)$. For $(0, 1)$, the jumps are given by $h=l=2$. Since $\max\{u(5), u(6), u(7)\}=6>5=u(n_2)$, $(*)$ is not satisfied. For $(1, 1)$, we have $d_2=m=\max\{d_1, d_2\}=1<3=p$ and $1=h<l=2$. Clearly, $u(n_1)=u(2)=7>\max\{u(3), \cdots, u(7)\}$. Thus,  $\mbox{Pie}_{2, 3}(u)=\{(0, 0), (1, 1)\}.$
    }
 \end{example}

  For  $F\ell_{n_1, \cdots, n_k; n+1}$, there are $k$ quantum variables $q_{\alpha_{n_i}^\vee+Q^\vee_P}$, $1\leq i\leq k$,  which we simply denote as $\bar q_i$ respectively.

\begin{thm}[Equivariant quantum Pieri rule for $F\ell_{n_1, \cdots, n_k; n+1}$]\label{thmequiQPR} For any $1\leq i\leq k, 1\leq p\leq n_i$ and any $u\in W^P$, we have
{\upshape $$\sigma^{c[n_i, p]}\star \sigma^u=\sum_{(d_1, \cdots, d_k)\in \scriptsize\mbox{Pie}_{i, p}(u)}  \sum_{j=0}^{p-d_i}  \sum_{w} \xi^{n_i-d_i-j,  p-d_i-j}(\mu_{w\cdot \phi_\mathbf{d}, u\cdot \tau_\mathbf{d}, n_i-d_i})\sigma^w  \bar q_1^{d_1}\cdots \bar q_k^{d_k},$$
}
where  the last sum is  over all {\upshape $w\in \mbox{Per}(\mathbf{d})$} satisfying
     $w\cdot \phi_{\mathbf{d}}\in S_{n_i-d_i, j}(u\cdot \tau_\mathbf{d})$.
 \end{thm}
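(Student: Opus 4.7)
My plan is to follow the strategy sketched in the introduction: reduce the equivariant quantum structure coefficients $N_{c[n_i,p], u}^{w, \mathbf{d}}$ of $QH^*_T(F\ell_{n_1,\ldots,n_k;n+1})$ to purely classical equivariant coefficients in $H^*_T(F\ell_{1,2,\ldots,n;n+1})$, and then apply Robinson's equivariant Pieri rule \cite{Robinson}. The reduction has two stages: first, lift each coefficient to $QH^*_T(G/B)$ via the equivariant Peterson-Woodward comparison formula (Proposition \ref{propcomparison}); second, apply Theorem \ref{thmforQtoC} together with Corollary \ref{correduction} iteratively inside $QH^*_T(G/B)$ to strip off all quantum contributions until only a classical equivariant coefficient remains.

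For the first stage, I would enumerate the effective classes $\mathbf{d} \in H_2(G/P, \mathbb{Z})$ for which $N_{c[n_i,p], u}^{w, \mathbf{d}}$ can be nonzero, compute the Peterson-Woodward lifting $\lambda_B$ explicitly in type $A$, and describe $w\omega_P\omega_{P'}$ in terms of $w$. A direct combinatorial check, using the staircase shape of effective $\mathbf{d}$ imposed by the Pieri class $c[n_i, p]$, shows that the relevant $\mathbf{d}$ are exactly those with the jump data of Definition \ref{defnPieriSeq}, and that $w\omega_P\omega_{P'}$ coincides with $w \cdot \phi_\mathbf{d}$, accounting for the appearance of $\phi_\mathbf{d}$. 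For the second stage, each application of Theorem \ref{thmforQtoC}(2) peels off one simple coroot from $\lambda_B$ while multiplying one of the arguments by a simple reflection on the right. Organizing the simple coroots of $\lambda_B$ into the chains $\alpha_{n_{h_j}}^\vee + \alpha_{n_{h_j}+1}^\vee + \cdots + \alpha_{n_{l_j+1}-1}^\vee$ attached to each jump, I would verify inductively that after all reductions the surviving coefficient equals $N_{c[n_i,p], \, u\tau_\mathbf{d}}^{w\phi_\mathbf{d}, 0}$ in $H^*_T(G/B)$, with $\tau_\mathbf{d}$ the permutation of Definition \ref{defnPieriSeq}. The hypotheses required for successive reductions to succeed translate exactly into the condition $(*)$ defining $\mbox{Pie}_{i,p}(u)$ and into the defining condition of $\mbox{Per}(\mathbf{d})$.

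For the third stage, I would apply Robinson's equivariant Pieri rule to the classical product $\sigma^{c[n_i,p]} \circ \sigma^{u\tau_\mathbf{d}}$ in $H^*_T(G/B)$ and extract the coefficient of $\sigma^{w\phi_\mathbf{d}}$. Robinson's rule is stated in terms of special $j$-superior chains, so the condition $w\phi_\mathbf{d} \in S_{n_i-d_i, j}(u\tau_\mathbf{d})$ and the associated partition $\mu_{w\phi_\mathbf{d}, u\tau_\mathbf{d}, n_i-d_i}$ appear directly from the formula. The final step is to identify the resulting polynomial factor with the Grassmannian restriction coefficient $\xi^{n_i-d_i-j, p-d_i-j}(\mu_{w\phi_\mathbf{d}, u\tau_\mathbf{d}, n_i-d_i})$, which is a direct comparison of Robinson's formula with the Kostant-Kumar expression recalled in Proposition/Definition \ref{defnKosPolyforCoeff}. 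The main obstacle will be the combinatorial bookkeeping threading through all three stages: verifying that the triples $(\mathbf{d}, j, w)$ surviving the quantum-to-classical reductions correspond bijectively to those enumerated in the theorem, and that the surviving classical coefficient coincides with the stated Grassmannian restriction. This is intricate but tractable, and should fall out once the reductions are indexed in a uniform way by the jump structure of $\mathbf{d}$.
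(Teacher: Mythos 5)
Your proposal follows essentially the same strategy as the paper: (a) apply the equivariant Peterson--Woodward comparison to lift the coefficient to $QH^*_T(G/B)$, (b) iterate the quantum-to-classical reduction of Theorem \ref{thmforQtoC} and Corollary \ref{correduction} to strip off the coroot $\lambda_B$, and (c) invoke Robinson's equivariant Pieri rule, after a last descent-cycling step that converts the Kostant--Kumar coefficient $N^{v_{[w,u,r]},0}_{c[r-j,p-j],\,v_{[w,u,r]}}$ into the Grassmannian restriction coefficient $\xi^{n_i-j,p-j}(\mu_{w,u,n_i})$. The paper merely organizes (a) and (b) differently: it first proves the theorem for $F\ell_{n+1}$, where $P=B$ and the Peterson--Woodward lift is trivial, via an induction on the jumps of $\lambda$ (Claims A--C), and only then runs Peterson--Woodward for general $P$ and applies the already-established $G/B$ reductions.

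One concrete inaccuracy to fix: $w\omega_P\omega_{P'}$ does \emph{not} coincide with $w\cdot\phi_\mathbf{d}$. What the paper actually shows is that $\omega_P\omega_{P'}\hat\phi^{(m)}\cdots\hat\phi^{(1)}=\phi_\mathbf{d}\cdot v_1^{[0]}v_2^{[1]}\cdots v_m^{[m-1]}$, where the factors $v_j^{[j-1]}$ live in $W_{P'}$. The $\phi_\mathbf{d}$ emerges only after composing the $\hat\phi^{(j)}$ produced by the quantum-to-classical reduction with $\omega_P\omega_{P'}$, and then removing the remaining $v_j^{[j-1]}$ by further applications of Corollary \ref{correduction}(1); this last step simultaneously converts $u\hat\tau^{(m)}\cdots\hat\tau^{(1)}$ into $u\cdot\tau_\mathbf{d}$. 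Relatedly, you will need a separate argument (the paper's Claims D and E) to show that every jump position of $\lambda_B$ must lie in $\{n_1,\ldots,n_k\}$ — equivalently, that $\ell(us_a)<\ell(u)$ forces $\alpha_a\notin\Delta_P$ — since this is what identifies the jump data of $\lambda_B$ with that of $\bar{\mathbf{d}}$; this does not simply ``fall out'' of the staircase shape and should be made explicit.
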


Let us say a few words on the constraints in the   theorem.    Given $\mathbf{d}=(d_1, \cdots, d_k)$ of the form in   Definition \ref{defnPieriSeq} with $d_i=\max\{d_1, \cdots, d_k\}\leq p$, we  have (see   \cite{Buch-PartialFlag,CFon}).
\begin{align*}
  \mathbf{d}\in  \mbox{Pie}_{i, p}(u) &\Longleftrightarrow \ell(u\cdot\tau_\mathbf{d})=\ell(u)-\ell(\tau_\mathbf{d});\\
    w\in \mbox{Per}(\mathbf{d})&\Longleftrightarrow \ell(w\cdot \phi_\mathbf{d})=\ell(w)+\ell(\phi_\mathbf{d}).
\end{align*}

\begin{remark}
   The above formula  is different from the one given in   \cite{LaSh-affinePieri} by Lam and Shimozono,  who worked on the side of   equivariant homology of affine Grassmannians. In \cite{LaSh-affinePieri},   special Schubert classes are of the form $\sigma^{s_ps_{p-1}\cdots s_2s_1s_\theta}$ where $\theta$ denotes the highest root, and they generate $QH^*_T(F\ell_{1, \cdots, n; n+1})$ as well. These special classes, in general,   are not pullback from $H^*(F\ell_{n_1, \cdots, n_k; n+1})$, and therefore  do not induce  equivariant quantum Pieri rules for
      $F\ell_{n_1, \cdots, n_k; n+1}$ immediately.
\end{remark}
\noindent\textbf{Example \ref{exampleproduct} (Continued). } 
Note  
          $\tau_{(1, 1)}=(234567)$,
   $\phi_{(1, 1)}=(1234)$, $u\cdot\tau_{(0,0)}=u=[3715246]$ and $u\cdot \tau_{(1,1)}=[3152467]$.
Write $w\cdot \phi_\mathbf{d}=(u\cdot\tau_{\mathbf{d}}) \cdot \zeta$  for     $w\in \mbox{Per}(\mathbf{d})\bigcap \big(S_{4-d_2, j}(u\cdot \tau_\mathbf{d})\big)\cdot (\phi_{\mathbf{d}})^{-1}$.  Denote $\mathbf{a}:=\mu_{w\cdot \phi_\mathbf{d}, u\cdot \tau_\mathbf{d}, 4-d_2}$.  Precisely, we have

  \begin{center}
  \begin{tabular}{|c|c|c|c|c|c|c|c|}
    \hline
    $\mathbf{d}$& $j$&$w\cdot \phi_\mathbf{d}$& $\zeta$ & $\mathbf{a}$ &  $[i_1,\cdots,i_{|\mathbf{a}|}]$& $w$  \\ \hline \hline
     &  $0$ & $[3715246]$ & $\mbox{id}$  & $(3,2,1,0)$ &   $[6,4,5,2,3,4]$&    \\ \cline{2-6}
     &    & $[4715236]$ & $(16)$  & $(4,3,0)$ &    $[6,4,5,3,4,2,3]$&   \\ \cline{3-6}
    &  $1$ & $[3725146]$ & $(35)$  & $(4,3,2)$ &  $[6,4,5,2,3,4,1,2,3]$&   \\ \cline{3-6}
    &   & $[3716245]$ & $(47)$  & $(4,1,0)$ &   $[6,5,4,2,3]$& \raisebox{1.5ex}[0pt]{Coincide}   \\ \cline{2-6}
    \raisebox{1.5ex}[0pt]{$(0,0)$} &   & $[4725136]$ & $(16)(35)$  & $(5,4)$ &  $[6,4,5,3,4,2,3,1,2]$&  \raisebox{1.5ex}[0pt]{with}   \\ \cline{3-6}
     &  $2$ & $[4716235]$ & $(16)(47)$  & $(5,0)$ &  $[6,5,4,3,2]$& \raisebox{1.5ex}[0pt]{$w\cdot \phi_\mathbf{d}$}   \\ \cline{3-6}
      &    & $[3726145]$ & $(35)(47)$  & $(5,2)$ &  $[6,5,4,2,3,1,2]$&    \\ \cline{2-6}
    &  $3$ & $[4726135]$ & $(16)(35)(47)$  & $(6)$ &  $[6,5,4,3,2,1]$&    \\ \hline
     &  $1$ & $[3251467]$ & $(24)$  & $(3,2)$ &   $[4,2,3,1,2]$& $[1325467]$  \\ \cline{2-7}
   $(1,1)$ &   & $[4251367]$ & $(15)(24)$  & $(4)$ &    $[4,3,2,1]$& $[1425367]$  \\ \cline{3-7}
   &  \raisebox{1.5ex}[0pt]{$2$} & $[3261457]$ & $(24)(36)$  & $(2)$ &  $[2,1]$& $[1326457]$  \\ \hline
  \end{tabular}
 \end{center}
  By Definition \ref{defnKosPolyforCoeff}, we can write down  $\xi^{4-d_i-j, 3-d_i-j}(\mathbf{a})$  immediately. By abuse of notation, we  simply denote each Schubert class $\sigma^v$ as $v$. In conclusion, we have
   \begin{align*}&\quad{c[4, 3]}\star [3715246]\\
     &= \alpha_2(\alpha_2+\alpha_3+\alpha_4)(\alpha_2+\cdots+\alpha_6)[3715246]\\
      &\quad+\alpha_2(\alpha_2+\cdots+\alpha_6)[3716245]+(\alpha_2+\alpha_3+\alpha_4)(\alpha_2+\cdots+\alpha_6)[4715236]\\
       &\quad+\big(\alpha_2(\alpha_2+\alpha_3+\alpha_4)+\alpha_2(\alpha_1+\cdots+\alpha_6)+(\alpha_1+\cdots+\alpha_4)(\alpha_1+\cdots+\alpha_6)\big)[3725146]\\
          &\quad  +(\alpha_1+2\alpha_2+\alpha_3+\alpha_4+\alpha_5+\alpha_6)[3726145] +(\alpha_2+\cdots+\alpha_6)[4716235] \\
          &\quad +(\alpha_1+2\alpha_2+2\alpha_3+2\alpha_4+\alpha_5+\alpha_6)[4725136]\\
                        &\quad+[4726135]+\bar q_1\bar q_2[1326457]+\bar q_1\bar q_2[1425367]\\
          &\quad+(\alpha_1+2\alpha_2+\alpha_3+\alpha_4)\bar q_1\bar q_2[1325467]\\
   \end{align*}

 \subsection{Proof of the equivariant quantum Pieri rule for $F\ell_{n_1, \cdots, n_k; n+1}$}

This subsection is devoted to a proof of Theorem \ref{thmequiQPR}.
We will show  it  by   reducing all the relevant structure coefficients   to certain structure coefficients  of degree zero using  Theorem \ref{thmforQtoC}, so that we can  apply  Robinson's equivariant Pieri rule \cite{Robinson}.

\subsubsection{Robinson's equivariant Pieri rule}

  For $u\in W$ and   $w\in S_{r, j}(u)$ where $0\leq j\leq r\leq n$, we denote by   $\{i_1<i_2<\cdots<i_j\}$ the set of indices $i_s$ such that $i_s<r$ and $i_s$ occurs in a cycle decomposition of $u^{-1}w$. Here we mean the empty set if $j=0$. In \cite{Robinson}, Robinson introduced an associated element
   $v_{[w, u, r]}=[v(1)\cdots  v(n+1)]$ that is obtained  from $u$ by moving the entries $u(i_1), \cdots, u(i_j)$ to positions $r-j+1, r-j+2, \cdots, r$, respectively, and preserving the relative positions of all other entries. That is, $v_{[w, u, r]}$ is of the form  $[*\cdots * u(i_1)u(i_2)\cdots u(i_j)u(r+1)\cdots u(n+1)]$.   The next   equivariant Pieri rule for $F\ell_{n+1}:=F\ell_{1,2, \cdots, n; n+1}$ is due to Robinson.

\begin{prop}[Theorem A of \cite{Robinson}]\label{propEquivPieri}
     Let $u\in W$ and $1\leq p\leq r\leq n$. 
      We have
        $$\sigma^{c[r, p]}\circ \sigma^u=\sum_{j=0}^p\sum_{w\in S_{r, j}(u)} N_{c[r-j,  p-j],v_{[w, u, r]}}^{v_{[w, u, r]}, 0}\sigma^w\quad\mbox{in } H^*_T(F\ell_{n+1}).$$
\end{prop}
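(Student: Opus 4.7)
The plan is to prove Proposition \ref{propEquivPieri} by induction on the Pieri length $p$, with the base case supplied by the classical equivariant Chevalley rule (the $q=0$ limit of Proposition \ref{propQChevalley}) and the inductive step driven by the reduced factorization $c[r,p] = s_{r-p+1}\cdot c[r,p-1]$. For the base case $p=1$ we have $c[r,1] = s_r$, and the desired formula reduces to the equivariant Chevalley expansion of $\sigma^{s_r}\circ \sigma^u$: the $j=0$ summand produces the scalar $N_{s_r,u}^{u,0} = \chi_r - u(\chi_r)$ (via localization at the $T$-fixed point $uB$), while the $j=1$ summand enumerates transpositions $(i\,r)$ with $i<r$, $u(i)<u(r)$, and $\ell(u(i\,r)) = \ell(u)+1$, each weighted by $\langle \chi_r,\gamma^\vee\rangle = 1$ for the associated positive root $\gamma = \alpha_i+\cdots+\alpha_{r-1}$.

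For the inductive step, the equivariant Chevalley rule applied to $\sigma^{s_{r-p+1}}\circ \sigma^{c[r,p-1]}$ yields
$$\sigma^{s_{r-p+1}}\circ \sigma^{c[r,p-1]} = \sigma^{c[r,p]} + \bigl(\chi_{r-p+1} - c[r,p-1](\chi_{r-p+1})\bigr)\sigma^{c[r,p-1]} + \sum_{v'}\lambda_{v'}\sigma^{v'},$$
where each correction $v' \neq c[r,p]$ satisfies $\ell(v') = p$ and arises from a length-increasing reflection $s_\gamma$ with $\gamma \neq \alpha_{r-p+1}$. Solving for $\sigma^{c[r,p]}$, multiplying on the right by $\sigma^u$, and inserting the inductive Pieri formula for $\sigma^{c[r,p-1]}\circ \sigma^u$ produces an expression in which each resulting $\sigma^{s_{r-p+1}}\circ \sigma^{w'}$ is again expanded by Chevalley. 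The task is then to reorganize this double expansion by grouping contributions according to the final permutation $w$ and the degree $j$ of its special $r$-superiority over $u$.

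The heart of the argument is a combinatorial identification: one must construct a bijection between the Chevalley-generated terms of degree $p$ and pairs $(w,j)$ with $w \in S_{r,j}(u)$, reflecting the inductive growth of a special cycle decomposition $u^{-1}w = \zeta_1\cdots \zeta_d$, and then match the accumulated equivariant weight with $N_{c[r-j,p-j],\,v_{[w,u,r]}}^{v_{[w,u,r]},0}$, using the explicit description of $v_{[w,u,r]}$ as the permutation obtained by shifting $u(i_1),\ldots,u(i_j)$ into positions $r-j+1,\ldots,r$. The main obstacle will be step (b) of this bookkeeping: the coefficient $N_{c[r-j,p-j],\,v_{[w,u,r]}}^{v_{[w,u,r]},0}$ is itself a nontrivial localization-type polynomial, and verifying that it equals the product of equivariant weights accumulated through iterated Chevalley multiplications requires an explicit Billey/Kostant--Kumar-type expansion tailored to the permutation $v_{[w,u,r]}$.

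A conceptually cleaner alternative, which I would pursue in parallel, is to exploit the \emph{uniqueness} of structure constants in $H^*_T(F\ell_{n+1})$: by associativity, $N_{c[r,p],u}^{w,0}$ is determined recursively once the Chevalley rule and the initial conditions $N_{c[r,p],e}^{c[r,p],0}=1$ are specified. One can then take the right-hand side of Robinson's formula as an ansatz and verify that it obeys the same recursion obtained by expanding $\sigma^{s_i}\circ \sigma^{c[r,p]}\circ \sigma^u$ in two ways. This bypasses much of the explicit Chevalley bookkeeping, at the cost of a careful check that the recursion propagates correctly across the three parameters $r$, $p$, and $u$.
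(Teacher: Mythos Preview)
The paper does not prove this proposition at all: it is quoted verbatim as Theorem~A of Robinson~\cite{Robinson} and used as a black box, so there is no ``paper's own proof'' to compare against. What you have written is an attempted independent proof of Robinson's result, and it has a genuine gap in the inductive step.

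The problem is that your induction on $p$ does not close. Carry out the Chevalley expansion you propose. Since $s_j(\chi_{r-p+1})=\chi_{r-p+1}$ for every $j\geq r-p+2$, the scalar term $\bigl(\chi_{r-p+1}-c[r,p-1](\chi_{r-p+1})\bigr)$ vanishes. For the reflection terms one checks (using the one-line form $c[r,p-1]=[1,\ldots,r{-}p{+}1,\,r{-}p{+}3,\ldots,r{+}1,\,r{-}p{+}2,\,r{+}2,\ldots]$) that exactly two covers occur, namely $\gamma=\alpha_{r-p+1}+\cdots+\alpha_r$ and $\gamma=\alpha_{r-p+1}$, giving
\[
\sigma^{s_{r-p+1}}\circ\sigma^{c[r,p-1]}=\sigma^{c[r,p]}+\sigma^{c[r,p-1]\,s_{r-p+1}}.
\]
Hence
\[
\sigma^{c[r,p]}\circ\sigma^{u}
= \sigma^{s_{r-p+1}}\circ\bigl(\sigma^{c[r,p-1]}\circ\sigma^{u}\bigr)
  \;-\;\sigma^{c[r,p-1]s_{r-p+1}}\circ\sigma^{u}.
\]
The first piece is indeed handled by your inductive hypothesis plus a Chevalley step. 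But the correction class is indexed by $v'=c[r,p-1]s_{r-p+1}=s_{r-p+2}s_{r-p+1}s_{r-p+3}\cdots s_r$, which has length $p$ and is \emph{not} of the form $c[r',p']$ for any $(r',p')$. Your induction hypothesis says nothing about $\sigma^{v'}\circ\sigma^{u}$, and you give no mechanism for dealing with it; the phrase ``each resulting $\sigma^{s_{r-p+1}}\circ\sigma^{w'}$ is again expanded by Chevalley'' silently drops this term. Without controlling $\sigma^{v'}\circ\sigma^u$ the argument cannot proceed.

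Your alternative plan---verify that Robinson's right-hand side satisfies the Chevalley recursion and invoke uniqueness of structure constants---is much closer to a workable strategy (and closer in spirit to how such formulas are actually established), but what you have written is only a sketch; the substantive work is precisely the ``careful check'' you defer.
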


In the following, we further reduce the structure coefficients in the above equivariant Pieri rule to a more special type   for complex Grassmannians.

\begin{cor}\label{coreEquiPieri}
  Let $1\leq i\leq k$, $1\leq p\leq n_i$ and $u\in W^P$. In $H^*_T(F\ell_{n_1, \cdots, n_k; n+1})$, 
     $$\sigma^{c[n_i, p]}\circ \sigma^u=\sum_{j=0}^p\sum_{w\in S_{n_i, j}(u)} \xi^{n_i-j,  p-j}(\mu_{w, u, n_i})\sigma^w.$$
Furthermore, a coefficient $\xi^{n_i-j,  p-j}(\mu_{w, u, n_i})$ vanishes if and only if
      $p-j$ is larger than the first entry $\mu_1^T$ of the transposed partition $\mu_{w, u, n_i}^T=(\mu_1^T, \cdots) \in \mathcal{P}_{n+1-n_i+j, n+1}$.
\end{cor}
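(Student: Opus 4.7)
I would deduce the corollary directly from Robinson's equivariant Pieri rule (Proposition \ref{propEquivPieri}). View $u \in W^P$ inside $W = S_{n+1}$ and consider the natural projection $\pi: F\ell_{n+1} \to F\ell_{n_1, \ldots, n_k; n+1}$: its pullback $\pi^*$ on equivariant cohomology is an injection sending $\sigma^w_P$ to $\sigma^w_B$ for each $w \in W^P$. Both $\sigma^{c[n_i, p]}$ and $\sigma^u$ lie in the image of $\pi^*$, so applying Proposition \ref{propEquivPieri} with $r = n_i$ in $H^*_T(F\ell_{n+1})$ computes $\pi^*$ of the desired product; by linear independence of Schubert classes, the right-hand side is then automatically supported on $w \in W^P$.

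The substantive step is to identify each scalar $N^{v_{[w, u, n_i]}, 0}_{c[n_i-j, p-j], v_{[w, u, n_i]}}$ in Robinson's expansion with $\xi^{n_i - j, p-j}(\mu_{w, u, n_i})$. Geometrically this scalar is the restriction $\iota^*_{v_{[w, u, n_i]}}(\sigma^{c[n_i-j, p-j]})$ at the $T$-fixed point of $F\ell_{n+1}$ labeled by $v_{[w, u, n_i]}$. Since $\sigma^{c[n_i-j, p-j]}$ is the pullback of the Grassmannian class $\sigma^{1^{p-j}}$ along the projection $F\ell_{n+1} \to Gr(n_i - j, n+1)$, this restriction coincides with the restriction of $\sigma^{1^{p-j}}$ at the image of that fixed point in $Gr(n_i - j, n+1)$, i.e.\ at the $T$-fixed point labeled by the subset $\{v_{[w, u, n_i]}(1), \ldots, v_{[w, u, n_i]}(n_i - j)\}$. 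By Robinson's construction this subset equals $\{u(1), \ldots, u(n_i)\} \setminus \{u(i_1), \ldots, u(i_j)\}$, where $i_1 < \cdots < i_j$ are the indices $\leq n_i$ occurring in a cycle decomposition of $u^{-1}w$; sorting and applying $\varphi_{n_i - j}$ returns exactly the partition $\mu_{w, u, n_i}$. Hence, by Proposition-Definition \ref{defnKosPolyforCoeff} and the definition $\xi^{m, p}(\mathbf{a}) = N^{\mathbf{a}, 0}_{\mathbf{a}, 1^p}$, the scalar equals $\xi^{n_i - j, p-j}(\mu_{w, u, n_i})$.

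For the vanishing criterion, write $m = n_i - j$, $\mathbf{a} = \mu_{w, u, n_i}$ and $q = p - j$. The last block of the reduced word $[i_1, \ldots, i_{|\mathbf{a}|}]$ of Proposition-Definition \ref{defnKosPolyforCoeff} is $[m - a_1^T + 1, \ldots, m]$, of length $a_1^T$; the value $m$ occurs only once in that word, at the very end. A subsequence $[i_{j_1}, \ldots, i_{j_q}] = [m-q+1, \ldots, m]$ therefore exists if and only if $q \leq a_1^T$, in which case the final $q$ entries of the last block provide one. The "only if" direction of the stated criterion is Remark \ref{rmkXivanish}; for the converse, each factor $\gamma_b$ is a positive root, and Graham-positivity (Proposition \ref{propPositivity}) forbids cancellation among the monomials in $\mathbb{Z}_{\geq 0}[\alpha_1, \ldots, \alpha_n]$, so the existence of at least one valid subsequence forces $\xi^{m, q}(\mathbf{a}) \neq 0$.

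The main obstacle is the combinatorial matching at the heart of the second paragraph: one must carefully unwind Robinson's construction of $v_{[w, u, r]}$ alongside the definition of $\mu_{w, u, j}$ to verify that, via the projection to $Gr(n_i - j, n+1)$, the two recipes record the same $T$-fixed point and hence the same equivariant localization.
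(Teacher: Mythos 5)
Your proof is correct but takes a genuinely different route from the paper's at the crucial step. The paper identifies the Robinson coefficient $N^{v_{[w,u,n_i]},0}_{c[n_i-j,p-j],\,v_{[w,u,n_i]}}$ with $\xi^{n_i-j,p-j}(\mu_{w,u,n_i})$ purely combinatorially: it factors $v_{[w,u,n_i]}=\tilde v x$ with $\tilde v$ the minimal coset representative and $x$ in the Weyl subgroup generated by $\{s_\alpha\mid\alpha\neq\alpha_{n_i-j}\}$, notes the factorization is length-additive, and repeatedly applies Corollary \ref{correduction}(1) (descent-cycling, reflection by reflection along a reduced word of $x$) to transport the coefficient to $N^{\tilde v,0}_{c[n_i-j,p-j],\,\tilde v}$. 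You instead reinterpret $N^{v,0}_{v,u}$ geometrically as the equivariant localization $\iota_v^*(\sigma^u)$, observe that $\sigma^{c[n_i-j,p-j]}$ is pulled back from $Gr(n_i-j,n+1)$, and use functoriality of restriction to fixed points to push the computation down to the Grassmannian, where the fixed point is labeled by $\mu_{w,u,n_i}$. Both arguments are valid; yours is more conceptually transparent and essentially reproves Corollary \ref{correduction}(1) in this special case via geometry, while the paper's stays entirely inside the combinatorial machinery it has built. Two minor quibbles on the last paragraph: you have swapped the labels ``if'' and ``only if'' (Remark \ref{rmkXivanish} gives $p-j>\mu_1^T\Rightarrow\xi=0$, which is the ``if'' direction of the stated biconditional), though the substance of each half of your argument is right; and invoking Graham-positivity via Proposition \ref{propPositivity} is heavier than needed, since the explicit formula in Proposition/Definition \ref{defnKosPolyforCoeff} already exhibits $\xi^{m,q}(\mathbf a)$ as a sum of products of positive roots, manifestly a nonzero element of $\mathbb{Z}_{\geq 0}[\alpha_1,\cdots,\alpha_n]$ whenever the index set is nonempty.
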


\begin{proof}
We let      $\tilde v\in S_{n+1}$ be the $(n_i-j)$-th Grassmannian permutation (which has   at most a descent at the $(n_i-j)$-th position)  determined by the property that
$[\tilde v(1)\cdots \tilde v(n_i-j)]$ is an  increasing sequence obtained from $u$ by sorting
     the values $$\{u(1), \cdots, u(n_i)\}\setminus \{u(d)~|~ d\leq n_i, d \mbox{ occurs in a cycle decomposition of } u^{-1}w\}.$$
    Then  by definition, $\mu_{w, u, n_i}=\varphi_{n_i-j}(\tilde v)$ is a partition in $\mathcal{P}_{n_i-j, n+1}$.
 Moreover,
  $x:=\tilde v^{-1} v_{[w,u,n_i]}$ is in the Weyl subgroup      generated by  $\{s_\alpha~|~ \alpha\neq {\alpha_{n_i-j}}, \alpha\in \Delta\}$, and     $\ell(v_{[w,u,n_i]})=\ell(\tilde v)+\ell(x)$. We notice that $\mbox{sgn}_\alpha(c[n_i-j,  p-j])$  for any $\alpha\neq \alpha_{n_i-j}$. It follows immediately from Corollary \ref{correduction} (1)
that $$ N_{c[n_i-j,  p-j], v_{[w, u, n_i]}}^{v_{[w, u, n_i]}, 0}=N_{c[n_i-j,  p-j], \tilde vx}^{\tilde v x, 0}=N_{c[n_i-j,  p-j], \tilde v}^{\tilde v, 0}=\xi^{n_i-j, p-j}(\mu_{w, u, n_i}).$$
Write the transpose of   $\mu_{w, u, n_i}$ as $(\mu_1^T, \mu_2^T, \cdots, \mu_{n+1-n_i+j}^T)$. It follows directly from Proposition \ref{defnKosPolyforCoeff}
that $\xi^{n_i-j, p-j}(\mu_{w, u, n_i})\neq 0$ if and only if $p-j\leq \mu_1^T$.
\end{proof}

\subsubsection{Proof of Theorem \ref{thmequiQPR} for $F\ell_{n+1}$}
In this subsection, we will prove the theorem for the case   $k=n$: $$F\ell_{n_1, \cdots, n_k; n+1}=F\ell_{1,2, \cdots, n; n+1}=F\ell_{n+1}.$$
This will be done by a combination of a number of claims. Our readers can first focus on the  statements themselves  without referring to the technical arguments, in order to get an outline of the proof of our Pieri rule.

For $F\ell_{n+1}$, there are $n$      quantum variables   $q_1, \cdots, q_n$, and $n_i=i$ for $i=1, \cdots, n$.
  The statement to prove concerns the equivariant quantum multiplication by Schubert classes $\sigma^{c[n_i, p]}$  with
 $$c[n_i, p]=s_{n_i-p+1}\cdots s_{n_i-1}s_{n_i}.$$
We notice that $\mbox{sgn}_{r}(c[n_i, p])$ is equal to $1$ if $r=n_i$, or $0$ otherwise. On the other hand,
for any nonzero effective coroot $\lambda\in Q^\vee$, there always exists a simple root $\alpha$ such that  $\langle \alpha, \lambda\rangle>0$.
In many cases, we can find $\alpha\neq \alpha_{n_i}$ with  $\langle \alpha, \lambda\rangle>1$, implying  that $q_\lambda$ never occurs in the corresponding multiplication  by Theorem \ref{thmforQtoC} (1). Therefore, $q_\lambda$ occurs in $\sigma^{c[n_i, p]}\star \sigma^u$ only if $\lambda$ is of particular type. Precisely,


\vspace{0.1cm}

\noindent\textbf{Claim A: }{\itshape
Assume  $N_{c[n_i, p], u}^{w, \lambda}\neq 0$, where $\lambda=d_1\alpha_1^\vee+\cdots+d_n\alpha_n^\vee$. Then  we have
   $$(1)\,\,  d_i\leq p;\quad (2)\,\, 0\leq d_1\leq\cdots \leq d_i;\quad  (3)\,\,  d_i\geq\cdots \geq d_n\geq 0.$$ }
\begin{proof}
    Clearly,  $\sigma^{c[n_i, p]}$ occurs in $(\sigma^{s_{n_i}})^p$. Since $N_{c[n_i, p], u}^{w,\lambda}\neq 0$,  $q_\lambda$ occurs in the product $(\sigma^{s_{n_i}})^p\star \sigma^u$
    by   the positivity property.  By    Proposition \ref{propQChevalley},   there is $0\leq m \leq p$ such that $\lambda=\sum_{j=1}^m(\alpha_{k_j}^\vee+\alpha_{k_j+1}^\vee+\cdots+\alpha_{k_j'}^\vee)$ where  $k_j\leq n_i\leq k_j'$ for each $j$.
     Thus   $d_i=m=\max\{d_1, \cdots, d_n\}\leq p$. This proves (1).

       If (2) did not hold, then    $\{{j}~|~j<i, d_j>  d_{j+1}\}$ is non-empty, so that we can take the minimum $b$ of it.
     Then $1\leq b<i-1$ and $d_{b-1}\leq d_b$.

      If  $d_{b-1}<d_{b}$,  then we have the following inequalities by noting   $\mbox{sgn}_b(c[n_i, p])=0$:
     $$\mbox{sgn}_{b}(w)+\langle \alpha_b, \lambda\rangle=\mbox{sgn}_b(w)+(2d_b-d_{b-1}-d_{b+1})\geq 2>\mbox{sgn}_b(c[n_i, p])+\mbox{sgn}_b(u).$$   This  would imply $N_{c[n_i, p], u}^{w, \lambda}=0$ by Theorem \ref{thmforQtoC} (1), which makes a   contradiction.

    If $d_{b-1}=d_b$, then for $a:=\min\{j~|~ d_j=d_b\}\leq b-1$, we have   $d_{a-1}<d_a$. If $d_a-d_{a-1}\geq 2$, we conclude $N_{c[n_i, p], u}^{w, 0}=0$ again by using $\mbox{sgn}_a$. If $d_a-d_{a-1}=1$, then by using Corollary \ref{correduction} (2) repeatedly,  we have $N_{c[n_i, p], us_as_{a+1}\cdots s_{b-1}}^{ws_as_{a+1}\cdots s_{b-1}, \lambda'}=N_{c[n_i, p], u}^{w, \lambda}\neq 0$, where $\lambda'=\lambda-\alpha_{a}^\vee-\cdots-\alpha_{b-1}^\vee$. Note $\langle \alpha_b, \lambda'\rangle=\langle \alpha_b, (d_b-1)\alpha_{b-1}^\vee+d_b\alpha^\vee_b+d_{b+1}\alpha^\vee_{b+1}\rangle \geq 2$. It  would follow that   $N_{c[n_i, p], us_as_{a+1}\cdots s_{b-1}}^{ws_as_{a+1}\cdots s_{b-1}, \lambda'}=0$, which makes     a contradiction again.

Hence, $0\leq d_1\leq\cdots \leq d_i$.
  Similarly,  we can show   $d_i\geq\cdots \geq d_n\geq 0$.
\end{proof}


Thanks to the above claim, we have $m:=\max\{d_1, \cdots, d_n\}=d_i$ if $N_{c[n_i, p], u}^{w, \lambda}\neq 0$. Moreover,
we can denote by $$1\leq h_{r_1}<\cdots< h_{m-1}< h_m\leq l_m< l_{m-1}< \cdots< l_{r_2}\leq n$$
all the jumps among $d_r$'s,  namely  $d_{h_j-1}<d_{h_{j}}$ for $r_1\leq j\leq m$,  and $d_{l_j}>d_{l_j+1}$ for $m\geq j\geq r_2$.
 As we will show in Claim C, there are exactly $m$ increasing (decreasing) jumps (i.e.,  $r_1=r_2=1$), together with lots of constraints on $u$ and $w$.
To make this conclusion,  we denote
  $$\varpi_j:=s_{h_j}\cdots s_{i-2}s_{i-1}\cdot s_{l_j}\cdots s_{i+2}s_{i+1},$$
 $$\begin{array}{rlcrl}
     \qquad  \hat\tau^{(j)}:=&\!\!\!\! s_{h_j}s_{h_j+1}\cdots s_{l_j},\quad\mbox{and}  &&\hat\phi^{(j)}:=&\!\!\!\!s_{h_j}s_{h_j+1}\cdots s_{l_j-1}\\
\end{array}
$$
for any $\max\{r_1, r_2\}\leq j\leq m$. Define $\lambda^{(j-1)}$ inductively by
  $$ \lambda^{(m)}:=\lambda=d_1\alpha_1^\vee+\cdots+d_n\alpha_n^\vee \quad \mbox{and}\quad \lambda^{(j-1)}:=\lambda^{(j)}-\sum_{r=h_j}^{l_j}{\alpha_r}^\vee.$$
We will prove the conclusion by induction on $\lambda^{(j)}$.   Here is the first step of the induction, which we prove by applying  Corollary \ref{correduction} repeatedly.
 \vspace{0.1cm}

\noindent\textbf{Claim B: }{\itshape $N_{c[n_i, p], u}^{w, \lambda^{(m)}}\neq 0$ if and only if all the following hold:
\begin{enumerate}
                \item $d_{h_m-1}=d_{l_m+1}=m-1$;
                \item $\ell(u\hat\tau^{(m)})=\ell(u)-\ell(\hat\tau^{(m)})$ and $\ell(w\hat\phi^{(m)})=\ell(w)+\ell(\hat\phi^{(m)})$;
                  \item $N_{c[n_i-1, p-1], u\hat\tau^{(m)}}^{w\hat\phi^{(m)}, \lambda^{(m-1)}}\neq 0$;
                  \item  $N_{c[n_i-1, p-1], u\hat\tau^{(m)}}^{w\hat\phi^{(m)}, \lambda^{(m-1)}}=N_{c[n_i, p], u}^{w, \lambda^{(m)}}; \qquad (5)\,\, \ell(u\varpi_ms_i)=\ell(u)-\ell(\varpi_ms_i)$.
 \end{enumerate}}
\begin{proof}
   We first assume  $N_{c[n_i, p], u}^{w, \lambda^{(m)}}\neq 0$ and  discuss all the possibilities as follows.

    i) Assume  $h_m=l_m=i$ (i.e., both an increasing jump and a decreasing jump happen at the $(i=n_i)$-th position).  If (1) did not hold, then   $\langle \alpha_i, \lambda\rangle=(m-d_{i-1})+(m-d_{i+1})>2$. This would   imply  $N_{c[n_i, p], u}^{w, \lambda}=0$, making a contradiction. Hence, (1) holds, and $\langle \alpha_i, \lambda\rangle=2$. In this case, $\hat\tau^{(m)}=\hat\phi^{(m)}=s_i$ and $\varpi_m=\mbox{id}$. Hence, all (2), (3), (4), (5)  follow immediately from Theorem \ref{thmforQtoC}.

      ii)  Assume $h_m<i$ and $l_m=i$. Since $N_{c[n_i, p], u}^{w, \lambda}\neq 0$, it follows 
                     that
      $$1\geq \mbox{sgn}_{h_m}(u)=\mbox{sgn}_{h_m}(c[n_i, p])+\mbox{sgn}_{h_m}(u)\geq \mbox{sgn}_{h_m}(w)+\langle \alpha_{h_m}, \lambda\rangle\geq m-d_{h_m-1}\geq 1.$$
     Hence,   all the inequalities are in fact equalities. Thus we have $d_{h_m-1}=m-1$, $\ell(us_{h_m})=\ell(u)-1$, $\ell(ws_{h_m})=\ell(w)+1$, and consequently $N_{c[n_i, p], us_{h_m}}^{ws_{h_m}, \lambda-\alpha_{h_m}^\vee}=N_{c[n_i, p], u}^{w, \lambda}\neq 0$ by   Corollary \ref{correduction} (2).
      For    $h_m<a<n_i=l_m$, we note $\mbox{sgn}_a(c[n_i, p])=0$ and $\langle \alpha_a, \lambda-\alpha_{h_m}^\vee-\cdots -\alpha_{a-2}^\vee-\alpha_{a-1}^\vee\rangle=1$.
  Using Corollary \ref{correduction} (2) repeatedly,  we conclude   $\ell(us_{h_m}s_{h_m+1}\cdots s_{i-1})=\ell(u)-
     (i-h_m)$,   $\ell(ws_{h_m}s_{h_m+1}\cdots s_{i-1})=\ell(w)+(i-h_m)$, and $N_{c[n_i, p], us_{h_m}s_{h_m+1}\cdots s_{i-1}}^{ws_{h_m}s_{h_m+1}\cdots s_{i-1},  \lambda-\alpha_{h_m}^\vee-\cdots -\alpha_{i-2}^\vee-\alpha_{i-1}^\vee}=N_{c[n_i, p], u}^{w, \lambda}\neq 0$.  Since the reduced structure coefficient is nonzero,   $$2\geq \langle \alpha_i, \lambda-\alpha_{h_m}^\vee-\cdots -\alpha_{i-2}^\vee-\alpha_{i-1}^\vee\rangle=1+m-d_{i+1}\geq 2.$$
     Hence,   $d_{l_m+1}=d_{i+1}=m-1$, and consequently and
      $$N_{c[n_i-1, p-1], u\hat\tau^{(m)}}^{w\hat\phi^{(m)}, \lambda^{(m-1)}}=  N_{c[n_i, p], us_{h_m}s_{h_m+1}\cdots s_{i-1}}^{ws_{h_m}s_{h_m+1}\cdots s_{i-1},  \lambda-\alpha_{h_m}^\vee-\cdots -\alpha_{i-2}^\vee-\alpha_{i-1}^\vee} \neq 0$$
      by Theorem \ref{thmforQtoC} (2). That is, the statements (1), (3), (4) hold. It is easy to see that (2) and (5) hold as well.

     iii)  Assume $h_m=i$ and $l_m>i$. The claim holds by similar arguments to ii).

     iv)  Assume $h_m<i$ and $l_m>i$. Again by similar arguments to ii), we conclude $d_{h_m-1}=d_{l_m+1}=m-1$, 
     $\ell(u\varpi_ms_i)=\ell(u)-\ell(\varpi_ms_i)$,   $\ell(w\varpi_m)=\ell(w)+\ell(\varpi_m)$,
       $$\mbox{and}\hspace{2cm}0\neq N_{c[n_i, p], u}^{w, \lambda}=N_{c[n_i, p], u\varpi_m}^{w\varpi_m, \lambda^{(m-1)}+\alpha_i^\vee}=N_{c[n_i-1, p-1], u\varpi_ms_i}^{w\varpi_m, \lambda^{(m-1)}}.\hspace{5cm}{}$$
       For every  $i+1\leq a\leq l_m$, we have
          $\mbox{sgn}_a(c[n_i-1, p-1])=0$, $\langle\alpha_a, \lambda^{(m-1)}\rangle=0$ and $$\ell(ws_{h_m}\cdots s_{i-2}s_{i-1} \cdot s_{l_m}\cdots s_{a+1}s_{a})=
           \ell(ws_{h_m}\cdots s_{i-2}s_{i-1}\cdot s_{l_m}\cdots s_{a+2}s_{a+1})+1.$$
       By Corollary  \ref{correduction} (1), we conclude
      $$N_{c[n_i-1, p-1], u\varpi_ms_i}^{w\varpi_m, \lambda^{(m-1)}}=N_{c[n_i-1, p-1], u\varpi_ms_is_{i+1}\cdots s_{l_m}}^{ws_{h_m}\cdots s_{i-2}s_{i-1}, \lambda^{(m-1)}}\!\!\mbox{ and }\,\, \ell(u\varpi_ms_is_{i+1}\cdots s_{l_m})=\ell(u\varpi_ms_i)-l_m+i.$$
  Note $\varpi_ms_is_{i+1}\cdots s_{l_m}= \hat\tau^{(m)}s_{l_m-1}\cdots s_{i+1}s_i$. It follows that
  $$\ell(u\hat\tau^{(m)}s_{l_m-1}\cdots s_{i+1}s_i)=\ell(u)-(l_m-h_m+1)-l_m+i=\ell(u)-\ell(\hat\tau^{(m)})-l_m+i .$$
  As a consequence,  we have $\ell( u \hat\tau^{(m)})=\ell(u)-\ell(\hat\tau^{(m)})$, and   $\ell(u\hat\tau^{(m)}s_{l_m-1}\cdots s_{b+1} s_b)=\ell(u\hat\tau^{(m)}s_{l_m-1}\cdots s_{b})+1$ for all $i\leq b\leq l_m-1$. Hence, by Corollary \ref{correduction} (1), we have
  $$N_{c[n_i-1, p-1], u\hat\tau^{(m)}s_{l_m-1}\cdots s_{i+1}s_i}^{ws_{h_m}\cdots s_{i-2}s_{i-1}, \lambda^{(m-1)}}=N_{c[n_i-1, p-1], u\hat\tau^{(m)}}^{w\hat\phi^{(m)}, \lambda^{(m-1)}} $$
      $$\mbox{and}\hspace{1cm}\ell(w\hat\phi^{(m)})=\ell(ws_{h_m}\cdots s_{i-2}s_{i-1})+l_m-i= \ell(w)+\ell(\hat\phi^{(m)}). \hspace{2cm}{}$$
      In a summary, all (1)--(5) hold.

    The other direction is obvious. (In fact, $(4)$ is a consequence of  the hypotheses (1), (2) and (3).)
\end{proof}

 By using claims A and   B, the next claim follows immediately by induction on $\lambda^{(j)}$.

 \vspace{0.2cm}

  \noindent\textbf{Claim C: }{\itshape    $N_{c[n_i, p], u}^{w, \lambda}\neq 0$ only if   all the  following hold:
            \begin{enumerate}
               \item[(a)] $r_1=r_2=1$, namely there are exactly  $2m$ jumps among   $[0, d_1, \cdots, d_n, 0]$.
               \item[(b)] $\ell(u\cdot\hat\tau^{(m)}\cdots\hat\tau^{(1)})=\ell(u)-\ell(\hat\tau^{(m)}\cdots\hat\tau^{(1)})$ and \\ $\ell(w\cdot\hat\phi^{(m)}\cdots\hat\phi^{(1)})=\ell(w)+\ell(\hat\phi^{(m)}\cdots\hat\phi^{(1)})$.
               \item[(c)]   $\ell(u\varpi_ms_i\varpi_{m-1}s_{i-1}\cdots \varpi_1s_{i-m+1})=\ell(u)-\sum_{j=1}^m\ell(\varpi_js_{i-m+j})$.
            \end{enumerate}
        Whenever  both  {\upshape (a)} and  {\upshape (b)}  hold,  we   have
             $$N_{c[n_i-m, p-m], u\cdot \hat\tau^{(m)}\cdots\hat\tau^{(1)}}^{w\cdot \hat\phi^{(m)}\cdots\hat\phi^{(1)}, 0} =N_{c[n_i, p], u}^{w, \lambda} 
             .$$
             }
 \vspace{0.2cm}
Since $n_i=i$ in the case of $F\ell_{n+1}$, we have     $\hat\tau^{(j)}= \tau^{(j)}$
               and  $\hat\phi^{(j)}=\phi^{(j)}$ for all $j$. Therefore we have  $\hat\tau^{(m)}\cdots \hat\tau^{(1)}=\tau_{\mathbf{d}}$ and $\hat\phi^{(m)}\cdots \hat\phi^{(1)}=\phi_{\mathbf{d}}$.
     Hence, we finish the proof of Theorem \ref{thmequiQPR} for $F\ell_{n+1}$, by  using  Corollary \ref{coreEquiPieri} together with the fact that
    the   hypotheses (a), (b) in Claim C  are equivalent to   the hypotheses     $\mathbf{d}\in \mbox{Pie}_{i, p}(u)$ and
              $w\in \mbox{Per}(\mathbf{d})$.

 \vspace{0.25cm}
 In order to show the general case in   next subsection, we make one more claim.
 \vspace{0.25cm}

   \noindent\textbf{Claim D: }{\itshape
   Let $1\leq p\leq r\leq n$ and $u\in W$.  Suppose $q_1^{d_1}\cdots q_{n}^{d_n}$ occurs in the product $\sigma^{c[r, p]}\star  \sigma^u$ in  $QH^*_T(F\ell_{n+1})$.
        If  $j$ coincides with some jump $h_b$ or $l_b$ with  $1\leq b<m$, then we have  $\ell(us_j)<\ell(u)$.
}

\vspace{0.25cm}

\begin{proof}
By the hypothesis, there exists $w\in W$ such that $N_{c[r, p], u}^{w, \lambda}\neq 0$, where $\lambda=d_1\alpha_1^\vee+\cdots +d_n\alpha_n^\vee$.
By    Claim C (b), (c), we have
   $\ell(u\cdot\hat\tau^{(m)}\cdots\hat\tau^{(1)})=\ell(u)-\ell(\hat\tau^{(m)}\cdots\hat\tau^{(1)})$ and  $\ell(u\varpi_ms_r\varpi_{m-1}s_{r-1}\cdots \varpi_1s_{r-m+1})=\ell(u)-\sum_{j=1}^m\ell(\varpi_js_{r-m+j})$.
   Thus we have
        $\ell(us_a)<\ell(u)$, whenever a reduced expression of $\hat \tau^{(m)}\cdots \hat \tau^{(1)}$ or $\varpi_ms_r\cdots \varpi_1s_{r-m+1}$
                   starts with $s_a$. Hence, we are done, due to the following:
    $$(\hat\tau^{(m)}\cdots \hat\tau^{(1)})^{-1}(\alpha_{j})=\begin{cases}
      -(\alpha_{l_{b+1}-b+1}+\cdots\alpha_{l_b-b}+\alpha_{l_b-b+1})&\mbox{if }h_b=h_{b+1}-1\\
          -(\alpha_{h_{b}-b+1}+\cdots\alpha_{l_b-b}+\alpha_{l_b-b+1})&\mbox{if }h_b<h_{b+1}-1\end{cases}.$$
  Hence, $\hat\tau^{(m)}\cdots \hat\tau^{(1)}$ admits a reduced           expression  starting with $s_{h_b}$.
  Similarly, we conclude $(\varpi_ms_r\cdots \varpi_1s_{r-m+1})^{-1}(\alpha_{l_b})\not\in R^+$  by direct calculations. Consequently,   $\varpi_ms_r\cdots \varpi_1s_{r-m+1}$  admits a reduced           expression  starting with $s_{l_b}$.
\end{proof}

\subsubsection{Proof of Theorem \ref{thmequiQPR} for $F\ell_{n_1, \cdots, n_k; n+1}$} In this subsection, we   prove the theorem for    general  $F\ell_{n_1, \cdots, n_k; n+1}$ by reducing all the relevant structure coefficients to  the case of  $F\ell_{n+1}$, thanks to the equivariant Peterson-Woodward comparison formula.
We will    use  $\,\,\bar {}\,\,$ to distinguish from the  notations for $F\ell_{n+1}$.
For instance, we denote by $\bar q_j=q_{\alpha_{n_j}^\vee+Q^\vee_P}$ the quantum variables in  $QH^*_T(F\ell_{n_1, \cdots, n_k; n+1})$.
  For  $\lambda_P=\sum_{j=1}^k \bar d_j\alpha_{n_j}^\vee+Q^\vee_P$ and $u, w\in W^P$,
  by Proposition \ref{propcomparison}  we have
   $$N_{c[n_i, p], u}^{w, \lambda_P}=N_{c[n_i, p], u}^{w\omega_P\omega_{P'}, \lambda_B}$$
   for a unique   $\lambda_B=d_1\alpha_1^\vee+\cdots+ d_n\alpha_n^\vee$.

We investigate all the nonzero $N_{c[n_i, p], u}^{w, \lambda_P}$. By Claim C (a), we can    denote all the jumps  of the sequence $[0, d_1, \cdots, d_n, 0]$ as
     $1\leq h_1<\cdots<h_m\leq l_m<\cdots <l_1\leq n$.
   Since $\lambda_P=\lambda_B+Q_P^\vee$, we have  $d_{n_j}=\bar d_j$ for all $1\leq j\leq k$.

\vspace{0.25cm}

\noindent\textbf{Claim E: }{\itshape Assume $N_{c[n_i, p], u}^{w, \lambda_P}\neq 0$. Then
    there are   $2m$ jumps in total  among the sequence $[0, \bar d_1, \cdots, \bar d_k, 0]$:
   $$1\leq \bar h_1<\cdots <\bar h_m\leq \bar l_m<\cdots \bar l_1\leq k,$$
which are  given by the jumps for $\lambda_B$. Precisely, for all $1\leq j\leq m$,  we have
 $$h_j=n_{\bar h_j}, \quad l_j=n_{\bar l_{j}}, \mbox{ and } \bar d_{\bar h_j}=d_{h_j}=d_{l_j}=\bar d_{\bar l_j}=j.$$
}

\vspace{0.25cm}

\begin{proof}
It follows from Claim A and Claim B (1) that $d_{h_m}=m\geq d_{h_m+1}\geq m-1$ and  $d_{h_m-1}=m-1$.
      Hence,  $\langle \alpha_{h_m}, \lambda_B\rangle \in \{1, 2\}$. Since    $\langle \alpha, \lambda_B\rangle \in\{0, -1\}$ for all $\alpha\in\Delta_P$, we have  $\alpha_{h_m}\not\in \Delta_P$. Thus  $h_m\in\{n_1,\cdots, n_k\}$, i.e.,  $h_m=n_{\bar h_m}$ for some $1\leq \bar h_m\leq k$.
  Similarly, we conclude $d_{l_m}=m$, $l_m\in \{n_1, \cdots, n_k\}$, and hence $l_m= {n_{\bar l_m}}$.
 It follows that   $ \bar d_i=d_{n_i}=m=\max\{d_1, \cdots, d_n\}=\max\{\bar d_1, \cdots, \bar d_k\}$, and  the first two jumps around $\bar d_i$ occur exactly on
   $\bar h_m\leq \bar l_m$.

  By Claim D,  we have  $\ell(us_a)<\ell(u)$ for all $a\in \{h_1, \cdots, h_{m-1}, l_1, \cdots, l_{m-1}\}$. This implies    $\alpha_a\not \in\Delta_P$, since $u\in W^P$.
      Thus  $\{h_1, \cdots, h_{m-1}, l_1, \cdots, l_{m-1}\}\subset \{n_1, \cdots, n_k\}$.
The statement becomes a direct  consequence of Claim C (a).
\end{proof}

   By Claim C, we have
   \begin{align*}
      N_{c[n_i, p], u}^{w\omega_P\omega_{P'}, \lambda_B}&=N_{c[n_i-m, p-m], u\hat \tau^{(m)}\cdots \hat\tau^{(1)}}^{w\omega_P\omega_{P'}\hat \phi^{(m)}\cdots \hat\phi^{(1)}, 0};\\
      \ell(u\hat \tau^{(m)}\cdots \hat\tau^{(1)})&=\ell(u)-\ell(\hat \tau^{(m)}\cdots \hat\tau^{(1)}) \\
    \mbox{and}\qquad  \ell(w\omega_P\omega_{P'}\hat \phi^{(m)}\cdots \hat\phi^{(1)})&=\ell(w\omega_P\omega_{P'})+\ell(\hat \phi^{(m)}\cdots \hat\phi^{(1)}), \qquad \mbox{with}
  \end{align*}
     $$\hat \phi^{(j)}=s_{h_j}\cdots s_{l_j-1}=s_{n_{\bar h_j}}s_{n_{\bar h_j}+1}\cdots s_{n_{\bar l_j}-1},\quad \hat \tau^{(j)}=s_{h_j} \cdots s_{l_j}=s_{n_{\bar h_j}} \cdots s_{n_{\bar l_j}}.$$
    Note $\Delta_{P'}=\{\alpha\in \Delta_P~|~\langle \alpha, \lambda_B\rangle=0\}= \Delta_P\setminus \{\alpha_{n_{h_j}-1}, \alpha_{n_{l_j}+1}~|~ j=1, \cdots, m\}$ where $\Delta_P=\Delta\setminus\{\alpha_{n_1}, \cdots, \alpha_{n_k}\}$. It follows that
    $\omega_P\omega_{P'}=u_1\cdots u_mv_m\cdots v_1$, with
     $$u_j:=s_{n_{\bar h_{j}-1}+1}s_{n_{\bar h_{j}-1}+2}\cdots s_{n_{\bar  h_j}-1} \quad\mbox{and}\quad v_j:=s_{n_{\bar l_{j}+1}-1}s_{n_{\bar l_j+1}-2}\cdots s_{n_{\bar l_j}+1}.$$  Clearly,   $u_1, \cdots, u_m$, $v_1, \cdots, v_m$ are pairwise commutative. Denote
      $$v_j^{\scriptstyle{[j-1]}}:=s_{n_{\bar l_{j}+1}-j}s_{n_{\bar l_j+1}-j-1}\cdots s_{n_{\bar l_j}-j+2},$$
      which does not contain  $s_{n_i-m}$. It follows that
 $$\omega_P\omega_{P'}\hat\phi^{(m)}\cdots \hat\phi^{(1)}=u_m\hat\phi^{(m)}\cdots u_1\hat\phi^{(1)}v_1^{\scriptstyle {[0]}}v_2^{\scriptstyle {[1]}}\cdots v_m^{\scriptstyle{[m-1]}} =\phi^{(m)}\cdots \phi^{(1)}v_1^{\scriptstyle{[0]}}v_2^{\scriptstyle{[1]}}\cdots v_m^{\scriptstyle{[m-1]}}.$$
For $\bar{\mathbf{d}}=(\bar d_1, \cdots, \bar d_k)$, we recall    $\phi_{\bar {\mathbf{d}}}=\phi^{(m)}\cdots \phi^{(1)}$. Since $w\in W^P$,  we have
   $$\ell(w\omega_P\omega_{P'}\hat\phi^{(m)}\cdots\hat \phi^{(1)})
   =\ell(w)+\ell(\phi_{\bar{\mathbf{d}}})+\ell(v_1^{\scriptstyle{[0]}}v_2^{\scriptstyle{[1]}}\cdots v_m^{\scriptstyle{[m-1]}}).$$  Hence,  by Corollary \ref{correduction}(1),
   we have
   \begin{align*}
      N_{c[n_i-m, p-m], u\hat \tau^{(m)}\cdots \hat\tau^{(1)}}^{w\omega_P\omega_{P'}\hat \phi^{(m)}\cdots \hat\phi^{(1)}, 0}&=
    N_{c[n_i-m, p-m], u\hat \tau^{(m)}\cdots \hat\tau^{(1)}}^{w\phi_{\bar {\mathbf{d}}}v_1^{\scriptstyle{[0]}}v_2^{\scriptstyle{[1]}}\cdots v_m^{\scriptstyle{[m-1]}}, 0}\\
    &= N_{c[n_i-m, p-m], u\hat \tau^{(m)}\cdots \hat\tau^{(1)}\cdot (v_1^{\scriptstyle{[0]}}v_2^{\scriptstyle{[1]}}\cdots v_m^{\scriptstyle{[m-1]}})^{-1}}^{\phi_{\bar {\mathbf{d}}}, 0}\\
    &=N_{c[n_i-m, p-m], u\cdot \tau_{\bar{\mathbf{d}}}}^{w\cdot \phi_{\bar{\mathbf{d}}}, 0}
   \end{align*}

  $$\mbox{and}\quad \ell(u\cdot \tau_{\bar{\mathbf{d}}})=\ell(u)-\ell(\hat \tau^{(m)}\cdots \hat\tau^{(1)})-\ell((v_1^{\scriptstyle{[0]}}v_2^{\scriptstyle{[1]}}\cdots v_m^{\scriptstyle{[m-1]}})^{-1} )=\ell(u)-\ell(\tau_{\bar{\mathbf{d}}}).\qquad{}$$
Hence,   $$N_{c[n_i, p], u}^{w, \lambda_P}=N_{c[n_i, p], u}^{w\omega_P\omega_{P'}, \lambda_B}= N_{c[n_i-m, p-m], u\cdot \tau_{\bar{\mathbf{d}}}}^{w\cdot \phi_{\bar{\mathbf{d}}}, 0}\quad.$$
Then we are done by Corollary \ref{coreEquiPieri}.

\subsection{Specialization to  complex Grassmannians}

In this subsection, we will further simplify our equivariant quantum Pieri rule for the special case     $Gr(m, n+1)=F\ell_{m; n+1}$.
The bijection map
$\varphi_m: W^P\overset{\simeq}{\rightarrow}\mathcal{P}_{m, n+1}$   sends
   $$c[m, p]=s_{m-p+1}\cdots s_{m-1}s_m  \quad\mbox{to}\quad 1^p:=(1, \cdots, 1, 0, \cdots,0)\in \mathcal{P}_{m, n+1} \quad(p \mbox{ copies of }1). $$
Therefore we will also denote   the special  Schubert classes $\sigma^{c[m, p]}$  as  $\sigma^{1^p}$.
We remark  that  $\sigma^{1^p}$ is related with (but different from) the $p$-th equivariant Chern class $c_p^T(\mathcal{S}^*)$ of the dual of the tautological subbundle   (see e.g.  \cite[\S 5.1]{Mihalcea-EQGiambelli}).

The equivariant quantum multiplication by $\sigma^1$ was given by Mihalcea  \cite{Miha-EQSC}.
Here we will give a neat formula of the multiplication by all  $\sigma^{1^p}$  by simplifying   Theorem \ref{thmequiQPR}.
We remark that   the classical part of our formula, i.e., the equivariant Pieri rule, is   different from those known rules in  \cite{Laksov,GaSa}. It is obtained by simplifying Robinson's  Pieri rule in a purely combinatorial way. Nevertheless, our formulation has inspired   the second author and Ravikumar to find an equivariant Pieri rule for
 Grassmannians of all classical Lie types \cite{LiRa} in a geometric way.
 \begin{defn}\label{defpartitionforCompGrass}
   Let  $\nu=(\nu_1, \cdots, \nu_m)$ and $\eta=(\eta_1, \cdots, \eta_m)$ be partitions in $\mathcal{P}_{m, n+1}$ with $\eta_i-\nu_i\in\{0, 1\}$ for all $1\leq i\leq m$. Denote 
    by $j_1<j_2<\cdots <j_{m-r}$ all those  $\eta_{j_i}=\nu_{j_i}$.
   We define a partition  $\eta_\nu$ in $\mathcal{P}_{m-r, n+1}$ associated to $(\eta, \nu)$ by
                  $$\eta_\nu:=(\nu_{j_1}-j_1+r+1,  \nu_{j_2}-j_2+r+2,\cdots, \nu_{j_{m-r}}-j_{m-r}+m).$$
\end{defn}

The above definition can be alternatively described by the language of Young diagrams as follows. We also provide an example illustrated by Figures 1 and 2.   

\begin{defn-eg}\label{defeg}
   Let  $\nu, \eta$ be  partitions in   $\mathcal{P}_{m, n+1}$ such that the Young diagram of $\eta$ is obtained by adding a vertical strip to the Young diagram of $\nu$. Denote by $r$   the number of boxes in the strip $\eta/\nu$. 
 We define an associated partition $\eta_\nu$ in $\mathcal{P}_{m-r, n+1}$  by a simple join-and-cut operation as follows.
  \begin{enumerate}
    \item[Step 1:] Whenever a row of the Young diagram of $\eta$   inside the $m\times (n+1-m)$ rectangle  does not contain a box in the strip $\eta/\nu$, we add $A$ boxes, where $A$ counts the remaining rows of the rectangle below the given one.  We then move them to an $(m-r)\times (n+1-m+r)$ rectangle preserving the relative positions, which could be beyond the boundary of the  rectangle on the right.
    \item[Step 2:] For each row in the $(m-r)\times (n+1-m+r)$ rectangle, we remove $B$ boxes, where $B$ counts the remaining rows of the rectangle below the given one.
  \end{enumerate}
 As a result, we obtain a partition in $\mathcal{P}_{m-r,  n+1}$, denoted as $\eta_{\nu}$. In particular if $\eta=\nu$, then $r=0$ and $\eta_\nu=\nu$.
 
 Figure 1 illustrates the case of $\nu=(6, 3, 2, 2, 0, 0)$ and $\eta=(6, 3,3, 2,1,1)$ in $\mathcal{P}_{6, 13}$, for which we have $r=3$. Then the associated   partition in $\mathcal{P}_{3, 13}$ is given by $\eta_\nu=(9, 6, 4)$ as illustrated by Figure 2.
\end{defn-eg}

\begin{figure}[h]
  \caption{Young diagrams of partitions $\eta, \nu$}
  \label{partition}
\includegraphics[scale=1]{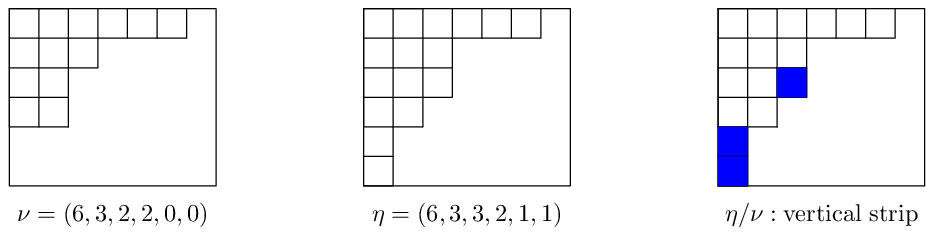}
\end{figure}

\begin{figure}[h]
  \caption{Associated parition $\eta_\nu$ by a join-and-cut operation }\label{joincut}
\includegraphics[scale=1]{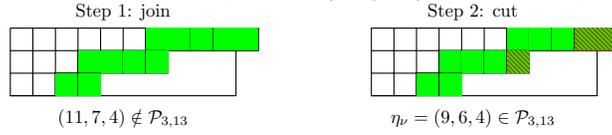}
\end{figure}

\begin{lemma}\label{lemmafromFltoGrass}
  Let $1\leq p\leq m$ and   $v, w\in W^P$. Denote  $\nu:=\varphi_m(v)$ and $\eta:=\varphi_m(w)$.
     Then $w\in S_{m, p}(v)$ if and only  if both of the following hold:
  {\upshape    $$\mbox{(i)}\,\,|\eta|=|\nu|+p;\qquad\mbox{(ii)}\,\,\eta_j-\nu_j\in \{0, 1\} \quad\mbox{ for all } 1\leq j\leq m.$$
  }
 Furthermore when this holds,
       we have   $\mu_{w, v, m}=\eta_\nu$.
\end{lemma}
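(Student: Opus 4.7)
The plan is to translate both sides of the equivalence to the level of permutations using the bijection $\varphi_m$, and then compare cycle decompositions of $\zeta := v^{-1}w$.

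First, via $\nu_j = v(m+1-j)-(m+1-j)$ (and analogously for $\eta, w$), conditions (i) and (ii) become: $w(i)-v(i) \in \{0,1\}$ for all $1 \leq i \leq m$, with equality $w(i)=v(i)+1$ holding at exactly $p$ indices forming a set $T \subseteq \{1,\ldots,m\}$. I would then show these permutation-level conditions are equivalent to $w \in S_{m,p}(v)$ by analyzing the orbits of $\zeta$. For $i \in T$, $\zeta(i) = v^{-1}(v(i)+1)$, so iterating traces positions whose $v$-values increase by $1$ each step. As long as the chain remains in $\{1,\ldots,m\}$ it must stay inside $T$ (a position $\leq m$ outside $T$ is a fixed point of $\zeta$, contradicting injectivity once another index maps to it), so the chain must eventually exit at some $r > m$. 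The values at the orbit positions are permuted cyclically by $w$, closing the cycle as $(r\, i_p\, \cdots\, i_1)$ with $v$-values $v(i_p) < \cdots < v(i_1) < v(r)$ forming a block of consecutive integers; conditions (1), (2) of the special $m$-superior definition then follow directly.

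For condition (3), $\ell(v\zeta_s) = \ell(v) + p_s$, I would carry out a direct inversion count: consecutiveness of the $v$-values in each cycle means no position outside the cycle contributes any inversion change, while the cyclic shift of a consecutive block adds exactly $p_s$ new inversions, one per pair $(i_{s,j}, r_s)$. Disjointness of cycles then yields $\ell(w) - \ell(v) = p$. The converse direction runs the same analysis backwards: given $w = v\zeta_1\cdots\zeta_d$ with disjoint special $m$-superior cycles, conditions (2) and (3) combined with $v, w \in W^P$ force the $v$-values at each cycle's positions to be consecutive, whence each $i_{s,j} \leq m$ satisfies $w(i_{s,j}) = v(i_{s,j})+1$ while positions outside every cycle satisfy $w(i) = v(i)$, recovering the vertical strip structure.

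Finally, for the identity $\mu_{w,v,m} = \eta_\nu$: the indices $i \leq m$ occurring in some cycle are exactly $T$, so the surviving positions defining $\mu_{w,v,m}$ are $\{1,\ldots,m\} \setminus T$. Enumerating these in decreasing order as $i_1 > i_2 > \cdots > i_{m-p}$ and setting $j_s := m+1-i_s$ gives $j_1 < \cdots < j_{m-p}$ precisely the indices where $\eta_{j_s} = \nu_{j_s}$. A short computation using $\nu_{j_s} = v(i_s) - i_s$ shows $(\eta_\nu)_s = \nu_{j_s} - j_s + p + s = v(i_s) - (m-p-s+1)$, which is exactly the $s$-th entry of $\mu_{w,v,m}$ from its defining decreasing sequence $[v(i_1), v(i_2), \ldots, v(i_{m-p})] = [\mu_1+(m-p), \ldots, \mu_{m-p}+1]$. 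The main obstacle will be the inversion-count argument verifying that condition (3), together with (1) and (2), forces consecutiveness of the $v$-values at each cycle's positions rather than merely their strict decrease stipulated in (2).
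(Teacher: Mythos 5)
There is a genuine gap at the "closing the cycle" step in the direction that derives $w\in S_{m,p}(v)$ from (i), (ii). You trace a forward chain $i_p\to i_{p-1}\to\cdots\to i_1\to r$ (with $\zeta(i_j)=i_{j-1}$, $v(i_{j-1})=v(i_j)+1$, and $r>m$), which is fine, but you then assert the cycle closes as $(r\,i_p\,\cdots\,i_1)$. This requires $\zeta(r)=i_p$, equivalently $w(r)=v(i_p)$, and nothing said so far forces it: conditions (i), (ii) constrain $w$ only on positions $\leq m$, so a priori the orbit could continue through several more positions $>m$ before returning to $T$, which would make $v^{-1}w$ have a cycle not of the form $(r\,i_{p_s}\,\cdots\,i_1)$ and hence $w\notin S_{m,p}(v)$. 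What actually forces closure is the fact that \emph{both} $v$ and $w$ are increasing on $\{m+1,\ldots,n+1\}$: sorting $\{w(m+1),\ldots,w(n+1)\}=\bigl(\{v(m+1),\ldots,v(n+1)\}\cup v(T)\bigr)\setminus(v(T)+1)$ and comparing slot-by-slot with $\{v(m+1),\ldots,v(n+1)\}$, one sees that the two lists agree everywhere except at the slot of each chain's exit value $v(r)$, where $v(r)=v(i_p)+p$ is replaced by $v(i_p)$; hence $w(r)=v(i_p)$, every position $>m$ outside these exit slots is a fixed point of $\zeta$, and each orbit meeting $T$ has exactly one element $>m$. You invoke $v,w\in W^P$ in passing, but the proposal never actually uses monotonicity on $\{m+1,\ldots,n+1\}$, which is the essential ingredient here; without it the cycle-closing claim is simply unjustified.

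The rest of the proposal is reasonable. You correctly flag the other nontrivial step (that $\ell(v\zeta_s)=\ell(v)+p_s$ together with $v,w\in W^P$ forces the $v$-values along each cycle to be consecutive, for the implication $w\in S_{m,p}(v)\Rightarrow$ (i),(ii)); the paper handles exactly that in Claim F via the $W^P$ property of $v\zeta_1\cdots\zeta_k$, whereas you propose an inversion count — a different but plausible route. The verification that $\mu_{w,v,m}=\eta_\nu$ via $(\eta_\nu)_s=v(i_s)-(m-p-s+1)$ is correct. Overall your strategy of analyzing $\zeta$'s orbits by $v$-value chains (rather than by nests of $1$'s in $\eta-\nu$, as the paper does) is arguably cleaner once the cycle-closing step is supplied, but that step is a real missing piece, not a formality.
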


\begin{proof}
We assume $w\in S_{m, p}(v)$ first.
Write $w=v\zeta_1\cdots\zeta_d$ where $\zeta_1, \cdots, \zeta_d$ are pairwise disjoint cycles.
Since $v, w\in W^P$, it follows that  $v\zeta_1\cdots\zeta_k\in W^P$ for all $1\leq k\leq d$.
Denote $p_k:=\ell(v\zeta_1\cdots\zeta_k)-\ell(v\zeta_1\cdots\zeta_{k-1})$. Then   $v\zeta_1\cdots\zeta_k\in S_{m, p_k}(v\zeta_1\cdots\zeta_{s-1})$ follows from the definition. In particular,  write $\zeta_1=(ri_{p_1}\cdots i_2i_1)$, then
  we have $$(1)\,\, i_1, \cdots, i_{p_1}\leq m< r; \qquad  (2)\,\, v(r)>v(i_1)>\cdots>v(i_{p_1}).$$

\noindent\textbf{Claim F: }  Denote $i_0:=r$. We have
 $$(a) [i_{p_1},\cdots, i_2, i_1]=[i_1-p+1,  \cdots, i_1-1, i_1];\quad (b)\,\, v(i_j)=v(i_{j+1})+1,  0\leq j\leq p_1-1.$$
Assuming the above claim first, we write   $\varphi_m(v\zeta_1)=(\eta^{(1)}_1, \cdots, \eta^{(1)}_m)=:\eta^{(1)}$.
For any  $1\leq s\leq m$ distinct from those $m+1-i_j$, we have
 $$\eta^{(1)}_s=v\zeta_1(m+1-s)-(m+1-s)=
  v(m+1-s)-(m+1-s)=\eta_s.$$
 By Claim F (b), we have
      $$\eta_{m+1-i_j}^{(1)}=v\zeta_1(i_j)-i_j=v(i_{j-1})-i_j=v(i_j)+1-i_j=\eta_{m+1-i_j}+1.$$
  Together with Claim F (a), we obtain
  $$(**):\quad \eta^{(1)}=(\nu_1, \cdots, \nu_{m-i_1},  \nu_{m-i_1+1}+1, \cdots, \nu_{m-i_1+p_1}+1, \nu_{m-i_1+p_1+2},\cdots, \nu_{m}),$$
  where   $p_1\leq i_1\leq m$. Thus if $d=1$, then  we are done.

Now we assume $d\geq 2$. Write $\zeta_2=(r'i_{p_2}'\cdots i_2'i_1')$. Since $\zeta_1, \zeta_2$ are disjoint cycles,
    $m+1-i_j\not\in\{m+1-i_1', \cdots, m+1-i'_{p_2}\}$. Write $\varphi_m(v\zeta_1\zeta_2)=\eta^{(2)}=(\eta_1^{(2)}, \cdots, \eta^{(2)}_m)$. Then
    $\eta_s^{(2)}=\eta^{(1)}_s$ whenever $s\in\{m+1-i_1, \cdots, m+1-i_{p_1}\}$.
    Using the same arguments as above, we conclude $\eta_s^{(2)}-\eta_s^{(1)}\in \{0, 1\}$ for all  $s\in\{1, \cdots, m\}\setminus\{m+1-i_1, \cdots, m+1-i_{p_1}\}$.
    Thus we  have   $\eta_s^{(2)}-\nu_s\in \{0, 1\}$ for all $1\leq s\leq m$.
   Hence, both (i) and (ii) hold   by induction on $k$.

   As a direct consequence of the above arguments,  we observe that
  $1\leq a\leq m$ occurs in some cycle $\zeta_s$ if and only if $\eta_{m+1-a}-\nu_{m+1-a}=1$.
 Hence, $[v(m+1-j_{1}), v(m+1-j_2), \cdots, , v(m+1-j_{m-p})]$ is the decreasing sequence obtained by
  sorting $\{v(1),\cdots, v(m)\}\setminus\{v(a)~|~ a \mbox{ occurs in } \zeta_s \mbox{ for some }$ $ 1\leq s\leq d\}$.
  Hence, the partition $\mu_{w, v, m}=(\mu_1, \cdots, \mu_{m-p})$ in $\mathcal{P}_{m-p, n+1}$ coincides with $\eta_\nu$, by noting $$\mu_i=v(m+1-j_i)-(m-p+1-i)=v(m+1-j_i)-(m+1-j_i)+p+i-j_i=\nu_{j_i}+p+i-j_i.$$

  On the other hand, we assume the hypotheses (i) and (ii) both hold now. If $\varphi_m(w)=\eta$ is given by  $(**)$, we define $i_j:=i_1-j+1$ for every $1\leq j\leq p_1$ and define $r$ to be the element satisfying $v(r)=v(i_1)+1$. It is easy to check $r>m$. Consequently, we have
     $w=v(ri_{p_1}\cdots i_2i_1)\in S_{m, p_1}(v)$ with $p_1=\ell(w)-\ell(v)=p$. In general, there are $d$ nests of consecutive $1$, for which we can construct  pairwise disjoint cycles $\zeta_1, \cdots, \zeta_d$ by induction, such that $w=v\zeta_1  \cdots \zeta_d\in S_{m, p}(v)$.

  It remains to show Claim F. It follows from  $v\in W^{P}$ and properties (1), (2) that $m\geq i_1>i_2>\cdots>i_{p_1}$. If (a) did not hold, then $i_j>i_{j+1}+1$ for some $1\leq j\leq p_1-1$, and  we would deduce a contradiction:
           $$v(i_j)=v\zeta_1(i_{j+1})<v\zeta_1(i_{j+1}+1)=v(i_{j+1}+1)<v(i_j).$$
  Hence, (a) holds.
  If (b) did not hold, then  $v(i_j)>v(i_{j+1})+1$ for some $0\leq j\leq p_1-1$.
  If $j>0$, then    $i_{j+1}+1=i_j$ by Claim F (a), and consequently
   $v(r)>v(i_{j+1})+1=v(a)$ for some $m+1<a<r$.
       In this case,   we  deduce a contradiction: $$v(a)=v\zeta_1(a)<v\zeta_1(r)=v(i_{p_1})\leq v(i_{j+1})=v(a)-1.$$
   If $j=0$, then   $v(r)>v(i_{1})+1=v(b)$ for some $b<r$. If $b>m$, then we would have
          $v(b)=v\zeta_1(b)<v\zeta_1(r)=v(i_{p_1})\leq v(i_1)=v(b)-1$. If $b\leq m$, then $b>i_1$ and consequently we have
              $v(r)=v\zeta_1(i_1)<v\zeta_1(b)=v(b)<v(r)$. Either cases deduces a contradiction again.
 \end{proof}

Using the above lemma, we can simplify  Theorem \ref{thmequiQPR} for the special case of complex Grassmannians, and therefore obtain the following.    The proof is essentially  the same as Corollary 3.3 of \cite{CFon}.
  There is only one quantum variable
  in $QH^*_T(Gr(m, n+1))$, which we simply denote as $q$.
\begin{thm}[Equivariant quantum Pieri rule for complex Grassmannians]\label{thmQPRforComplexGrassmannian}
   Let $1\leq p\leq m$ and  $\nu=(\nu_1, \cdots, \nu_m)\in \mathcal{P}_{m, n+1}$. In $QH^*_T(Gr(m, n+1))$, we have
     {\upshape $$\sigma^{1^p}\star \sigma^\nu=  \sum_{r=0}^{p} \sum_{\eta} \xi^{m-r,  p-r}(\eta_\nu)\sigma^\eta+
      \sum_{r=0}^{p-1}  \sum_{\kappa} \xi^{m-1-r,  p-1-r}(\kappa'_{\nu'})\sigma^\kappa q,$$
}
where the second sum is over those $\eta=(\eta_1, \cdots, \eta_m)\in \mathcal{P}_{m, n+1}$ satisfying $|\eta|=|\nu|+r$ and $\eta_i-\nu_i\in\{0, 1\}$ for all $1\leq i\leq m$; the $q$-terms
 occur  only if $\nu_1=n+1-m$, and when this holds,   the last sum is over
    those $\kappa=(\kappa_1, \cdots, \kappa_{m-1},0)\in \mathcal{P}_{m, n+1}$ such that
      $\kappa':=(\kappa_1+1, \cdots, \kappa_{m-1}+1)$ and $\nu':=(\nu_2, \cdots, \nu_m)$
        satisfy $|\kappa'|=|\nu'|+r$ and $\kappa_i+1-\nu_{i+1}\in\{0, 1\}$ for all $1\leq i\leq m-1$.
\end{thm}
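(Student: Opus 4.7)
The plan is to specialize the equivariant quantum Pieri rule for partial flag varieties (Theorem \ref{thmequiQPR}) to the case $k = 1$, where $F\ell_{m;n+1} = Gr(m,n+1)$ and there is a single quantum variable $\bar q_1 = q$. The key simplification is that the admissible degree sequences $\mathbf{d} = (d_1) \in \mathrm{Pie}_{1, p}(u)$ must fit the unimodal shape of Definition \ref{defnPieriSeq} applied to the length-three sequence $[d_0, d_1, d_2] = [0, d_1, 0]$, which forces $d_1 \in \{0, 1\}$. Thus Theorem \ref{thmequiQPR} splits into exactly two pieces: a classical contribution ($d_1 = 0$, no $q$) and a single quantum contribution ($d_1 = 1$, a single power of $q$). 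Lemma \ref{lemmafromFltoGrass} will serve as the bridge in both cases, converting the data $(S_{r, j}(\cdot), \mu_{*, *, r})$ for permutations into the vertical-strip partition data $(\eta, \eta_\nu)$.

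The classical case is immediate: $\tau_{(0)} = \phi_{(0)} = \mathrm{id}$ and $\mathrm{Per}((0)) = W^P$, so Theorem \ref{thmequiQPR} reduces to $\sum_{j=0}^{p} \sum_{w \in S_{m, j}(u)} \xi^{m-j, p-j}(\mu_{w, u, m}) \sigma^w$. Setting $\nu = \varphi_m(u)$, $\eta = \varphi_m(w)$, and renaming $r := j$, Lemma \ref{lemmafromFltoGrass} rewrites $w \in S_{m, j}(u)$ as the vertical-strip condition $|\eta| = |\nu|+r$ with $\eta_i - \nu_i \in \{0, 1\}$, while identifying $\mu_{w, u, m} = \eta_\nu$. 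This produces the first sum of the theorem.

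For the quantum case $\mathbf{d} = (1)$, the jumps are $h_1 = l_1 = 1$, so $\tau_{(1)} = s_m s_{m+1} \cdots s_n$ and $\phi_{(1)} = s_1 s_2 \cdots s_{m-1}$. The hypothesis $(*)$ reads $u(m) > \max\{u(r) : m+1 \leq r \leq n+1\}$; combined with $u(1) < \cdots < u(m)$ from $u \in W^P$, this forces $u(m) = n+1$, equivalently $\nu_1 = n+1-m$. The condition $w \in \mathrm{Per}((1))$ reads $w(1) < \min\{w(2), \ldots, w(m+1)\}$, which forces $w(1) = 1$, equivalently $\kappa_m = 0$. A direct one-line calculation in $S_{n+1}$ gives
\begin{align*}
u \cdot \tau_{(1)} &= [u(1), u(2), \ldots, u(m-1), u(m+1), \ldots, u(n+1), n+1],\\
w \cdot \phi_{(1)} &= [w(2), w(3), \ldots, w(m), 1, w(m+1), \ldots, w(n+1)],
\end{align*}
both of which are $(m-1)$-th Grassmannian permutations. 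Applying $\varphi_{m-1}$ yields $\varphi_{m-1}(u \cdot \tau_{(1)}) = (\nu_2, \ldots, \nu_m) = \nu'$ and $\varphi_{m-1}(w \cdot \phi_{(1)}) = (\kappa_1+1, \ldots, \kappa_{m-1}+1) = \kappa'$, where the uniform $+1$ shift in the second line is exactly the effect of $\phi_{(1)}$ placing the value $1$ at position $m$. A second application of Lemma \ref{lemmafromFltoGrass}, now to the pair $(\nu', \kappa')$, identifies $w \cdot \phi_{(1)} \in S_{m-1, j}(u \cdot \tau_{(1)})$ with the conditions $|\kappa'| = |\nu'| + j$ and $\kappa'_i - \nu'_i \in \{0, 1\}$, and gives $\mu_{w \cdot \phi_{(1)}, u \cdot \tau_{(1)}, m-1} = \kappa'_{\nu'}$. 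Relabeling $j = r$ produces the quantum sum of Theorem \ref{thmQPRforComplexGrassmannian}.

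The only real obstacle is careful bookkeeping: tracking the function-composition convention through the one-line notations of $u \cdot \tau_{(1)}$ and $w \cdot \phi_{(1)}$, and recognizing that the $+1$ shifts in $\kappa'$ match precisely how $\phi_{(1)}$ inserts the value $1$ at position $m$. Once these identifications are pinned down, the theorem follows from two applications of Lemma \ref{lemmafromFltoGrass} together with the observation that only $d_1 \in \{0, 1\}$ contribute when $k = 1$.
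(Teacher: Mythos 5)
Your proposal is correct and follows essentially the same route as the paper: specialize Theorem~\ref{thmequiQPR} to $k=1$ (so $\mathbf{d}\in\{(0),(1)\}$), handle the classical and $q$-parts separately, and invoke Lemma~\ref{lemmafromFltoGrass} twice to translate the permutation data $(S_{\cdot,\cdot},\mu_{\cdot,\cdot,\cdot})$ into partition data. Your explicit one-line computations of $u\cdot\tau_{(1)}$ and $w\cdot\phi_{(1)}$, and the resulting identifications $\varphi_{m-1}(u\cdot\tau_{(1)})=\nu'$ and $\varphi_{m-1}(w\cdot\phi_{(1)})=\kappa'$, are exactly what the paper asserts (the paper deduces $\nu_1=n+1-m$ via the length condition $\ell(v\tau_{(1)})=\ell(v)-\ell(\tau_{(1)})$ rather than via $(*)$, but as the paper notes these two conditions are equivalent); the rest matches the paper's argument.
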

\begin{proof}
Denote $v=\varphi_m^{-1}(\nu)$. Using the  same notations as in Theorem \ref{thmequiQPR}, we have
     $k=1$,  and hence $\mbox{Pie}_{1, p}\subset \{(0), (1)\}$. By Theorem \ref{thmequiQPR}, we have
         $$\sigma^{1^p}\star \sigma^\nu=  \sum_{j=0}^{p} \sum_{w\in S_{m, j}}  \xi^{m-j,  p-j}(\mu_{w, v, m})\sigma^w+\epsilon
      \sum_{j=0}^{p-1}  \sum_{w} \xi^{m-1-j,  p-1-j}(\mu_{w\cdot \phi_{(1)}, v\cdot\tau_{(1)}, m-1})\sigma^w q,$$
      where    $\epsilon=1$ if $\ell(v\cdot\tau_{(1)})=\ell(v)-\ell(\tau_{(1)})$, or $0$ otherwise; the last sum is over those $w\in \mbox{Pie}((1))$ satisfying $w\cdot \phi_{(1)}\in S_{m-1, j}(v\cdot\tau_{(1)})$.

 The classical part of the formula to prove is referred to as the equivariant $\mbox{Pieri}$ rule.  It follows from the canonical injective morphism $H_T^*(Gr(m, n+1))\hookrightarrow H^*_T(F\ell_{n+1})$ that $\xi^{m-j,  p-j}(\mu_{w, v, m})\neq 0$ only if $w\in W^P$.
  Hence, the equivariant Pieri rule
 follows directly from  Lemma \ref{lemmafromFltoGrass}.

  When  $\mathbf{d}=(1)$,  we have
       $\tau_{\mathbf{d}}=s_ms_{m+1}\cdots s_n$ and $\phi_{\mathbf{d}}=s_1s_2\cdots s_{m-1}$.
   Note $v(m)=\max\{v(1), \cdots, v(m)\}$ and $v(n+1)=\max\{v(m+1), \cdots, v(n+1)\}$. As a consequence,   the following are all equivalent:
$$\mbox{i) } \ell(v\cdot\tau_{(1)})=\ell(v)-\ell(\tau_{(1)}); \mbox{ ii) }
  v\tau_{\mathbf{d}}(\alpha_n)\in R^+; \mbox{ iii) }  v(m)>v(n+1);
   \mbox{ iv) }v(m)=n+1.$$
Hence, we have $\epsilon=1$ if and only if $\nu_1=v(m)-m=n+1-m$.  Furthermore when this holds,
  $v\cdot \tau_{(1)}$ is a Grassmannian permutation for $Gr(m-1, n+1)$, which corresponds to the partition   $\varphi_{m-1}(v\cdot \tau_{(1)})=(\nu_2, \cdots, \nu_{m-1})=:\nu'$ in $\mathcal{P}_{m-1, n+1}$ (by noting   $v\tau_{\mathbf{d}}(j)=v(j)$ for $1\leq j\leq m-1$).

    Write $\varphi_m(w)=(\kappa_1, \cdots, \kappa_m)=:\kappa$. Then the following are equivalent:
$$(1)\,\,w\in \mbox{Pie}((1)); \quad(2)\,\, \ell(w\cdot \phi_{(1)})=\ell(w)+\ell(\phi_{(1)}); \quad (3)  \kappa_m=0.$$
 It follows that     $w\cdot \phi_{(1)}$ is a Grassmannian permutation for $Gr(m-1, n+1)$, which corresponds to the partition    $\varphi_{m-1}(w\cdot \phi_{(1)})=(\kappa_1+1, \cdots, \kappa_{m-1}+1)=:\kappa'$.

     Hence, the $q$-part also becomes a direct consequence of Lemma \ref{lemmafromFltoGrass}.
       \end{proof}
\begin{remark}
  The non-equivariant quantum Pieri rule \cite{Bertram} can be obtained by using Proposition 11.10 of \cite{LaSh-GoverPaffineGr} and the Pieri-type formula of
   $H_*(\Omega SU(n+1))$ in \cite{LLMS}. It will be very interesting to generalize this approach to   the equivariant quantum cohomology of complex (or more generally, cominuscule) Grassmannians.
\end{remark}

\begin{example}
   Among the  product $\sigma^{(1,1,0)}\star \sigma^{(4, 2, 1)}$ in $H^*_T(Gr(3, 7))$,  two terms $q^0\sigma^\eta$ and $q^1 \sigma^\kappa$  can be  read off from the following figure:
   
   \includegraphics[scale=1]{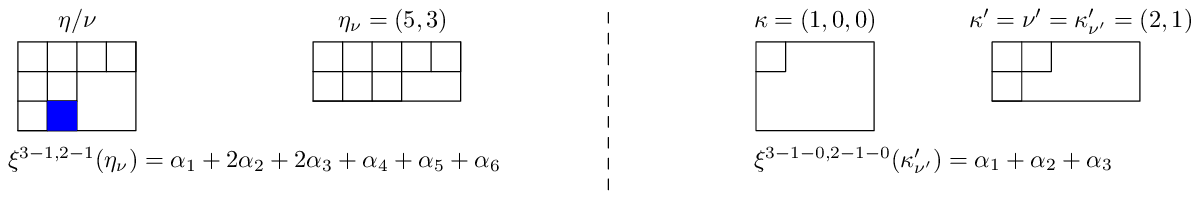}
  
By calculating the remaining terms in the product    by Theorem \ref{thmQPRforComplexGrassmannian}, we have 
  \begin{align*}&\sigma^{(1,1,0)}\star \sigma^{(4, 2, 1)}\\
  =&(\alpha_1+\alpha_2+\alpha_3)(\alpha_1+\cdots+\alpha_6)\sigma^{(4,2,1)}+ (\alpha_1+2\alpha_2+2\alpha_3+\alpha_4+\alpha_5+\alpha_6)\sigma^{(4, 2,2)}\\
    &+(\alpha_1+\cdots+\alpha_6)\sigma^{(4, 3,1)}+\sigma^{(4, 3,2)}+ q\sigma^{(1,1,0)}+q\sigma^{(2, 0,0)}+(\alpha_1+\alpha_2+\alpha_3)q\sigma^{(1,0 ,0)}\\
  \end{align*} 
\end{example}

\begin{cor}\label{corprodcmScQ}
    In $QH^*_T(Gr(m, n+1))$, we have
      \begin{align*}
         \sigma^{1^p}\star \sigma^{(n+1-m,0,\cdots, 0)}&=\sigma^{1^p}\circ\sigma^{(n+1-m,0,\cdots, 0)},\quad \mbox{for } 1\leq p< m;\\
         \sigma^{1^m}\star \sigma^{(n+1-m,0,\cdots, 0)}&=(\alpha_1+\cdots+\alpha_n)\sigma^{(n+1-m, 1, \cdots, 1)}+q.
      \end{align*}
\end{cor}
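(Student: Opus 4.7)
The plan is to derive both identities as direct specializations of Theorem \ref{thmQPRforComplexGrassmannian} with $\nu=(n+1-m,0,\ldots,0)$, by showing that almost all terms in the sums vanish.

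First I would handle the quantum part for every $p$. Since $\nu_1=n+1-m$, the $q$-sum is potentially nonempty. The constraint $\kappa_i+1-\nu_{i+1}=\kappa_i+1\in\{0,1\}$ together with $\kappa_i\geq 0$ forces $\kappa_i=0$ for all $1\leq i\leq m-1$. So the only candidate is $\kappa=(0,\ldots,0)$, which gives $\kappa'=(1,\ldots,1)\in\mathcal{P}_{m-1,n+1}$ with $|\kappa'|=m-1$, hence $r=m-1$. The outer constraint $0\leq r\leq p-1$ then demands $p\geq m$. Therefore for $1\leq p<m$ the entire quantum part vanishes, which immediately yields the first identity $\sigma^{1^p}\star\sigma^\nu=\sigma^{1^p}\circ\sigma^\nu$. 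For $p=m$, there is a unique quantum term with coefficient $\xi^{0,0}(\kappa'_{\nu'})=1$ (by the convention $\xi^{m,0}=1$ in Proposition \ref{defnKosPolyforCoeff}), contributing exactly $q$.

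Next, for the case $p=m$ I would analyze the classical part. Since $\eta\in\mathcal{P}_{m,n+1}$ must satisfy $\eta_i-\nu_i\in\{0,1\}$ and $\eta_1\leq n+1-m=\nu_1$, we must have $\eta_1=n+1-m$ and $\eta_i\in\{0,1\}$ for $i\geq 2$. The partition condition $\eta_2\geq\eta_3\geq\cdots\geq\eta_m$ then forces
$$\eta=(n+1-m,\underbrace{1,\ldots,1}_{k},\underbrace{0,\ldots,0}_{m-1-k})\quad\text{for some }0\leq k\leq m-1,$$
so $r=k$. The indices with $\eta_j=\nu_j$ are $j_1=1$ and $j_s=k+s$ for $2\leq s\leq m-k$, from which Definition \ref{defpartitionforCompGrass} yields $\eta_\nu=(n-m+k+1,0,\ldots,0)\in\mathcal{P}_{m-k,n+1}$. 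Thus $(\eta_\nu)_1^T=1$, so by Remark \ref{rmkXivanish} the coefficient $\xi^{m-k,m-k}(\eta_\nu)$ vanishes unless $m-k\leq 1$, i.e.\ unless $k=m-1$.

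The lone surviving term has $\eta=(n+1-m,1,\ldots,1)$ and $\eta_\nu=(n)\in\mathcal{P}_{1,n+1}$, and a short application of Proposition \ref{defnKosPolyforCoeff} (with $(n)^T=(1,\ldots,1)$, giving reduced word $s_ns_{n-1}\cdots s_1$) gives $\xi^{1,1}((n))=s_ns_{n-1}\cdots s_2(\alpha_1)=\alpha_1+\cdots+\alpha_n$. Combining with the $q$-term above yields the second identity. The main obstacle is the bookkeeping that pins down the shape of valid $\eta$ and the computation of $\eta_\nu$; once these are in hand, the vanishing via Remark \ref{rmkXivanish} is mechanical.
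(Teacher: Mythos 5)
Your proof is correct and follows essentially the same route as the paper's: specialize Theorem \ref{thmQPRforComplexGrassmannian} to $\nu=(n+1-m,0,\ldots,0)$, determine that $\kappa=(0,\ldots,0)$ is the only admissible quantum term (which forces $r=m-1\leq p-1$, hence $p\geq m$), compute $\eta_\nu=(n+1-m+k,0,\ldots,0)\in\mathcal{P}_{m-k,n+1}$, and kill all but the $k=m-1$ term via Remark \ref{rmkXivanish}. You spell out the constraint $\kappa_i+1\in\{0,1\}\Rightarrow\kappa_i=0$ more explicitly than the paper, which just reads off the inequality $p-1\geq m-1+|\kappa|$, but the substance is identical.
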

\begin{proof}
Let  $1\leq p\leq m$ and $\nu=(n+1-m, 0, \cdots, 0)$. It follows directly from    Theorem \ref{thmQPRforComplexGrassmannian}  that
    all possible partitions   are given by  $\eta^{(r)}:=(n+1-m, 1,\cdots, 1, 0,\cdots, 0)$ where $|\eta^{(r)}|=n+1-m+r$, $0\leq r\leq m-1$; and
     the $q$-terms occur only if there exists $\kappa$   satisfying    $p-1\geq r=|\kappa'|= m-1+|\kappa|\geq m-1$.
     Hence, if $p<m$, then $\sigma^{1^p}\star \sigma^{(n+1-m,0,\cdots, 0)}$ involves no $q$-terms. If $p=m$, then $|\kappa|=0$, namely
      $(0, \cdots, 0)$ is the only partition   satisfying the required properties.  Hence,
     $$\sigma^{1^m}\star \sigma^{\nu}=\sum_{r=0}^{m-1}\xi^{m-r, m-r}(\eta^{(r)}_\nu)\sigma^\eta+
   \xi^{m-1-(m-1), m-1-(m-1)}\big((1, \cdots, 1)_{(0, \cdots, 0)}\big)\sigma^{\scriptsize\mbox{id}}q,$$
  in which we note $\xi^{0,0}\big((1, \cdots, 1)_{(0, \cdots, 0)}\big)=1$.
   By definition, we have $\eta^{(r)}_\nu=(n+1-m+r, 0, \cdots, 0)\in\mathcal{P}_{m-r, n+1}$. Hence, $\xi^{m-r, m-r}(\eta^{(r)}_\nu)=0$ unless $r=m-1$. Furthermore when $r=m-1$, we have
    $$\xi^{m-r, m-r}(\eta^{(r)}_\nu)=\xi^{1, 1}((n, 0, \cdots, 0))=s_ns_{n-1}\cdots s_2(\alpha_1)=\alpha_1+\cdots+\alpha_n.$$ Hence, the statement follows.
\end{proof}

\section*{Appendix: Equivariant quantum Giambelli formula for complex Grassmannians}

 We expect out equivariant quantum Pieri rule to have further applications in the equivariant quantum Schubert calculus. To illustrate our expectation, we
 will   reprove  \cite[Theorem \ref{thmequivquanGiam}]{Mihalcea-EQGiambelli}. That is, we will study $QH^*_T(Gr(m, n+1))$, giving  alternative proofs of    the ring presentation   and the equivariant quantum Giambelli formula.
In our approach, we   use the equivariant quantum Pieri rule as in Theorem \ref{thmQPRforComplexGrassmannian}, together with the equivariant Giambelli formula
   \cite{Mihalcea-EQGiambelli,LaRaSa}.
 This is completely similar to  the one given by Buch \cite{Buch-Grassmannian} for the   non-equivariant  quantum  cohomology $QH^*(Gr(m, n+1))$.

 We follow  \cite{Mihalcea-EQGiambelli} for the next facts on equivariant cohomology $H^*_T(Gr(m, n+1))$.
  Treat   $S=\mathbb{Z}[\alpha_1, \cdots, \alpha_n]$ as a subring of $\mathbb{Z}[\mathbf{t}]=\mathbb{Q}[t_1, \cdots, t_{n+1}]$ via
 $$\alpha_i\mapsto t_{n+2-i}-t_{n+1-i}, \quad i=1, \cdots, n.$$
  By convention, we denote $t_i=0$ if $i\leq 0$ or $i\geq n+2$.
 Let $e_i=e_i(x_1, \cdots, x_m; \mathbf{t}), i=1, \cdots, m$   (resp. $h_j=h_j(x_1, \cdots, x_m; \mathbf{t}), j=1,\cdots, \cdots, {n+1-m}$) denote  the elementary  (resp. complete) homogeneous factorial Schur functions.
By convention, we denote $e_0=h_0=1$, $e_i=0$ if $i<0$ or $i>m$, and  $h_j=0$ if $j<0$ or $j>n+1-m$.
Define $\tau^{s}$ inductively by
   $\tau^{0}e_p:=e_p\,\,\,\mbox{ and }\,\,\,\tau^{s}e_p:=\tau^{s-1}e_p+(t_{s}-t_{m-p+s+1})\tau^{s-1}e_{p-1}.$
  Denote by $\lambda^T:=(\lambda_1^T, \cdots, \lambda_{n+1-m}^T)\in \mathcal{P}_{n+1-m, n+1}$ the transpose of a given partition $\lambda\in\mathcal{P}_{m, n+1}$.
Let  $H_k:=\det\big(\tau^{j-1}e_{1+j-i}\big)_{1\leq i, j\leq k}.$
We will need the next  lemma, which   follows directly from equation (2.10) of \cite{Mihalcea-EQGiambelli}
\begin{lemma}\label{lemmavanishcScQequiv}
  For any $M\in\mathbb{Z}^+$, in $H_T^*(Gr(m, n+1))$, we have
     $$ \sum_{p=0}^m(-1)^p\sigma^{1^p}\circ\tau^{1-M}H_{M-p}=0 $$
 with  
  $\tau^{0}H_j:=H_j$   and $\tau^{-s}H_j:=\tau^{1-s}H_j+(t_{j+m-s}-t_{1-s})\tau^{1-s}H_{j-1}.$
\end{lemma}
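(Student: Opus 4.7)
The plan is to reduce the identity to a relation among factorial symmetric functions and then invoke equation (2.10) of \cite{Mihalcea-EQGiambelli} directly. Under the standard isomorphism that identifies $H^*_T(Gr(m,n+1))$ with a quotient of the ring of factorial symmetric polynomials in $x_1,\ldots,x_m$ over $S$, the equivariant Giambelli formula sends $\sigma^{1^p}$ to the factorial elementary symmetric polynomial $e_p(x;\mathbf{t})$. My first task is therefore to identify each determinant $H_k = \det(\tau^{j-1}e_{1+j-i})_{1\le i,j\le k}$ with a factorial complete homogeneous polynomial $h_k$, up to an explicit shift of the $\mathbf{t}$-parameters. Since $H_k$ is the Jacobi--Trudi determinant associated to the column partition $1^k$, this identification follows from the standard inversion formula between factorial $e$'s and factorial $h$'s; the $\tau^{j-1}$-shifts inside the determinant are exactly what is needed so that the inversion produces a single $h_k$ rather than a more complicated determinant.

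Once these two translations are in hand, the outer operator $\tau^{1-M}$ applied to $H_{M-p}$ records the additional shift of $\mathbf{t}$ that converts the classical generating-function identity $H(z)E(-z)=1$ (valid at the factorial level with a compensating shift on one of the two families) into the form stated in the lemma. Concretely, after substitution the claim becomes
\[
\sum_{p=0}^{m}(-1)^p\,e_p(x;\mathbf{t})\,h_{M-p}(x;\mathbf{t}')\;=\;0 \qquad (M\ge 1),
\]
for a specific shifted sequence $\mathbf{t}'$, and this vanishing is precisely \cite[Eq.~(2.10)]{Mihalcea-EQGiambelli}. The recursive definition $\tau^{-s}H_j := \tau^{1-s}H_j + (t_{j+m-s}-t_{1-s})\tau^{1-s}H_{j-1}$ is designed exactly to realise the shift $\mathbf{t}\mapsto\mathbf{t}'$ required by Mihalcea's identity, so no new analytic content is introduced.

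The main obstacle is purely combinatorial bookkeeping: I must verify that the iterated shift produced by applying $\tau^{1-M}$ to $H_{M-p}$ matches, index by index, the shift that appears on the $h$-side of Mihalcea's equation, and that this matching is uniform in $p$. This is a careful but routine induction on $s$ for the recursion defining $\tau^{-s}H_j$, tracking the indices $t_{j+m-s}$ and $t_{1-s}$ against the analogous shift operator on factorial $h$'s. Once this index match is established, the lemma follows immediately by substitution, with no further geometric or quantum-cohomological input required.
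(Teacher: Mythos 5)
Your proposal takes essentially the same route as the paper: the paper's entire justification for Lemma \ref{lemmavanishcScQequiv} is the one-sentence citation that it ``follows directly from equation (2.10) of [Mihalcea-EQGiambelli],'' and your plan likewise reduces the identity to that equation. You supply more of the connective tissue — the Giambelli identification $\sigma^{1^p}\mapsto e_p$, the dual Jacobi--Trudi recognition of $H_k$ as the factorial $h_k$ (your phrase ``Jacobi--Trudi determinant associated to the column partition $1^k$'' is a slight terminology slip: $H_k$ is the \emph{dual} Jacobi--Trudi determinant for the row partition $(k)$, whose conjugate is $1^k$, but the identification with $h_k$ you intend is correct), and the observation that the $\tau^{-s}$ recursion implements exactly the parameter shift in Mihalcea's factorial Newton identity. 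Since the paper gives no argument at all, and your bookkeeping steps are genuinely routine verifications of the index shifts in the $\tau$-operators, your proposal is a faithful and somewhat expanded version of the paper's reasoning rather than a different approach.
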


 \begin{thm}[Equivariant quantum Giambelli formula; Theorem 4.2 of \cite{Mihalcea-EQGiambelli}]\label{thmequivquanGiam}
 There is a canonical isomorphism of $S[q]$-algebras,
       $$S[q][e_1, \cdots, e_m]/\langle H_{n-m+2}, \cdots, H_n, H_{n+1}+(-1)^mq\rangle \longrightarrow QH^*_T(Gr(m, n+1)),$$
defined by $e_p\mapsto \sigma^{1^p}$.
 Under this isomorphism,  $\sigma^{r} =H_r$ for $r\leq n+1-m$, and
        $$\sigma^{\lambda}=\det\big(\tau^{j-1}e_{\lambda_i^T+j-i}\big)_{1\leq i, j\leq n+1-m}.$$
   \end{thm}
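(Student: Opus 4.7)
The plan is to follow the strategy that Buch \cite{Buch-Grassmannian} used in the non-equivariant setting, adapted to our equivariant quantum context with the equivariant quantum Pieri rule (Theorem \ref{thmQPRforComplexGrassmannian}) as the main computational tool. Define an $S[q]$-algebra homomorphism
$$\phi\colon S[q][e_1,\dots,e_m]\longrightarrow QH^*_T(Gr(m,n+1)),\qquad e_p\mapsto \sigma^{1^p}.$$
Since the Schubert classes $\sigma^{1^p}$ are known generators of $QH^*_T(Gr(m,n+1))$ as an $S[q]$-algebra, $\phi$ is surjective. To show $\phi$ descends to a map from $R:=S[q][e_1,\dots,e_m]/\langle H_{n-m+2},\dots,H_n,H_{n+1}+(-1)^mq\rangle$, I first recall from \cite{Mihalcea-EQGiambelli} the classical relations $\phi(H_r)=0$ for $n-m+2\le r\le n+1$ modulo $q$. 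The task is therefore to compute the quantum corrections and show that $\phi(H_r)=0$ for $n-m+2\le r\le n$ and $\phi(H_{n+1})=(-1)^{m+1}q$.

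To carry out this verification, I would apply Lemma \ref{lemmavanishcScQequiv} classically and then replace $\circ$ by $\star$, analyzing the discrepancy using Corollary \ref{corprodcmScQ}. That corollary precisely isolates when $q$-corrections arise in $\sigma^{1^p}\star\sigma^{(n+1-m,\ast)}$: nontrivial corrections only occur when the partition $\nu$ being multiplied has $\nu_1=n+1-m$, i.e., when the Young diagram already touches the right boundary of the rectangle. Expanding each $\tau^{1-M}H_{M-p}$ (for $M=n+1$) in the Schubert basis and tracing which Schubert classes of the relevant shape appear, the only term producing a net $q$-correction in $\sum_{p=0}^m(-1)^p\sigma^{1^p}\star \tau^{-n}H_{n+1-p}$ is the $p=m$ term, contributing $(-1)^m q$ by the second identity of Corollary \ref{corprodcmScQ}. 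For $n-m+2\le r\le n$, the relevant Schubert classes entering $\tau^{1-r}H_{r-p}$ correspond to partitions with $\nu_1<n+1-m$, so no $q$-correction appears and the classical vanishing persists. This yields the required relations in $QH^*_T(Gr(m,n+1))$, so $\phi$ descends to $\bar\phi\colon R\to QH^*_T(Gr(m,n+1))$.

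Injectivity of $\bar\phi$ follows by an $S[q]$-rank comparison: both $R$ and $QH^*_T(Gr(m,n+1))$ are free $S[q]$-modules of rank $\binom{n+1}{m}$, the latter having the Schubert basis $\{\sigma^\lambda\}_{\lambda\in\mathcal{P}_{m,n+1}}$, and the former having the standard $S[q]$-basis indexed by partitions fitting in the rectangle (by the theory of Jacobi--Trudi determinants applied to the classical presentation, since the quantum relations differ from the classical only by adding a scalar multiple of $q$ in a single relation). Combined with surjectivity this gives the ring presentation. For the Giambelli formula, once the presentation is established, the identity $\sigma^\lambda=\det(\tau^{j-1}e_{\lambda_i^T+j-i})$ is a polynomial identity in the generators $e_p=\sigma^{1^p}$ that holds in the classical equivariant cohomology by \cite{Mihalcea-EQGiambelli,LaRaSa}. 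Since both sides of this identity lie in the subring generated by $\sigma^{1^p}$'s and the identity is a consequence of the defining relations of $R$, it automatically holds in $QH^*_T(Gr(m,n+1))$.

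The main obstacle will be the careful bookkeeping in Step 2 showing that quantum corrections cancel for $H_{n-m+2},\dots,H_n$ and leave exactly $(-1)^m q$ for $H_{n+1}$. This requires tracking which intermediate Schubert classes appearing in the expansion of $\tau^{1-M}H_{M-p}$ touch the right edge of the rectangle (so that Corollary \ref{corprodcmScQ} produces $q$-terms), and verifying that the equivariant coefficients of these corrections telescope according to the signs $(-1)^p$. A secondary subtlety is that Corollary \ref{corprodcmScQ} only handles products against $\sigma^{(n+1-m,0,\dots,0)}$, while one needs the analogous behavior for products with $\sigma^{(n+1-m,\nu_2,\dots,\nu_m)}$ appearing as summands of $\tau^{1-M}H_{M-p}$; this can be reduced to Corollary \ref{corprodcmScQ} by using the equivariant Pieri rule to expand such classes in terms of products of $\sigma^{1^p}$'s with $\sigma^{(n+1-m,0,\dots,0)}$, or obtained directly by a parallel application of Theorem \ref{thmQPRforComplexGrassmannian}.
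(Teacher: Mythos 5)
Your overall architecture is close to the paper's (use of the equivariant quantum Pieri rule, Lemma \ref{lemmavanishcScQequiv} and Corollary \ref{corprodcmScQ}, plus a freeness/rank argument à la \cite{FuPa,SiTi}), but the treatment of the Giambelli formula contains a genuine gap, and the treatment of $H_r$ for $n-m+2\leq r\leq n$ is more convoluted than necessary.

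The Giambelli step is not ``automatic'' from the ring presentation. Writing $\det\bigl(\tau^{j-1}e_{\lambda_i^T+j-i}\bigr)$, computed via $\star$, as $\sigma^\lambda + q\cdot g_\lambda$ (the classical equivariant Giambelli formula gives the $q=0$ part), you must prove $g_\lambda=0$, and this does not follow from knowing that the relations $H_{n-m+2}=\cdots=H_n=0$, $H_{n+1}=(-1)^{m-1}q$ hold. Indeed a degree argument cannot close this either: $\deg\sigma^\lambda=|\lambda|$ can be far larger than $\deg q=n+1$, so $q\cdot g_\lambda$ is not ruled out on degree grounds. Likewise ``both sides lie in the subring generated by $\sigma^{1^p}$'s'' is true but vacuous: the whole ring is generated by these classes, and the question is precisely which polynomial in them equals $\sigma^\lambda$ quantumly. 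The paper's essential observation here is combinatorial: the determinant is an $(n+1-m)\times(n+1-m)$ matrix, so each term in its Leibniz expansion is a product of at most $n+1-m$ factors $\sigma^{1^{i_1}}\star\cdots\star\sigma^{1^{i_{n+1-m}}}$ with $S$-coefficients. Theorem \ref{thmQPRforComplexGrassmannian} shows that the partial product of the first $j$ factors involves only Schubert classes $\sigma^\mu$ with $\mu_1\leq j$; since $j< n+1-m$ before the last multiplication, these partitions never touch the right edge of the $m\times(n+1-m)$ rectangle, so the Pieri rule contributes no $q$-terms at any stage. Hence $g_\lambda=0$. This induction is not just a side remark: it is also what gives $H_r=\sigma^r$ in $QH^*_T(Gr(m,n+1))$ for $r\leq n+1-m$, which your $H_{n+1}$ computation silently uses. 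Without it, your $H_{n+1}$ argument has an unproven input.

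For $H_r$ with $n-m+2\leq r\leq n$, the cleaner argument (and the one the paper uses) is purely by degree: $\deg H_r=r<n+1=\deg q$, so $\phi(H_r)$ has no $q$-terms, and since its classical image vanishes by the known ring presentation of $H^*_T(Gr(m,n+1))$, we get $\phi(H_r)=0$ directly. Your route through Lemma \ref{lemmavanishcScQequiv} with $M=r$ and a claim that ``the relevant Schubert classes correspond to partitions with $\nu_1<n+1-m$'' is not justified as written, and in any case carries more bookkeeping than the degree argument. I would replace it.
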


\begin{proof} It is sufficient (1) to calculate $H_k$ and $\det\big(\tau^{j-1}e_{\lambda_i^T+j-i}\big)$ with respect to the equivariant quantum product and (2) to subtract the quantum corrections, by an equivariant quantum extension of    \cite[Proposition 11]{FuPa} (or \cite[Proposition 2.2]{SiTi}).
The known ring presentation of $H_T^*(Gr(m, n+1))$ is read off from the first half of the statement by evaluating $q=0$, and the known equivariant Giambelli formula is exactly of the same form as in the second half.
Thus for any $\lambda\in \mathcal{P}_{m, n+1}$, we have
  $$\det\big(\tau^{j-1}e_{\lambda_i^T+j-i}\big)_{1\leq i, j\leq n+1-m}=\sigma^\lambda+q \cdot g_\lambda\,\,\mbox{ in } QH^*_T(Gr(m, n+1)), $$
for some element $g_\lambda \in   QH^*_T(Gr(m, n+1))$.
 The determinant  $\det\big(\tau^{j-1}e_{\lambda_i^T+j-i}\big)
 $   in $QH^*_T(Gr(m, n+1))\otimes_S\mathbb{Z}[\mathbf{t}]$
    is a summation of the form
    $$f(\mathbf{t})\sigma^{1^{i_1}}\star \sigma^{1^{i_2}}\star \cdots \star \sigma^{1^{i_{n+1-m}}}.$$
  By     Theorem \ref{thmQPRforComplexGrassmannian} and induction,   the expansion of
      $\sigma^{1^{i_1}}\star \sigma^{1^{i_2}}\star \cdots \star \sigma^{1^{i_{j}}}$ involves no $q$-terms, and    all Schubert classes  in the expansion are of the form  $\sigma^\mu$,   $\mu=(\mu_1, \cdots, \mu_m)$ with $\mu_1\leq j$. Hence, $g_\lambda=0$. That is, the second part of the statement holds, by noting that the determinant lies
        in $QH^*_T(Gr(m, n+1))$.
 In particular, we have $H_r=\sigma^r$ in  $QH^*_T(Gr(m, n+1))$,   for $r=0, 1, \dots,  n-m+1$.

      Clearly,  $\tau^{j-1}e_{1+j-i}$ is zero if $1+j<i$, or of degree $1+j-i$ otherwise. Hence, $\deg H_r=r$.
  Since  $\deg q=\langle c_1(T_{Gr(m, n+1)}), \sigma_{s_k}\rangle=n+1$, it follows that no $q$-term is involved in the expansion of $H_r$ in $QH^*_T(Gr(m, n+1))$, whenever $r<n+1$.

In $H^*_T(Gr(m, n+1))$ it follows from Lemma \ref{lemmavanishcScQequiv} with respect to  $M=n+1$ that
 $$H_{n+1}=(-1)^{m-1}e_m  H_{n+1-m}+\sum_{i=0}^{n}f_i(\mathbf{t})H_i+\sum_{j=0}^{n-m}g_{m,j}(\mathbf{t})e_m H_j+\sum_{p=1}^{m-1}\sum_{r=0}^{n-p} g_{p, j}(\mathbf{t})e_{p}H_r$$
 for some $f_i, g_{m, j}, g_{p, r}\in \mathbb{Z}[\mathbf{t}]$. Now we   compute the $q$-terms in the expansion of  the right-hand side as multiplications in $QH^*_T(Gr(m, n+1))\otimes_S\mathbb{Z}[\mathbf{t}]$. With respect to the equivariant quantum multiplications, we have shown $H_r=0$ if $n-m+2\leq r\leq n$, and
 $H_r=\sigma^r$ if $0\leq r\leq n-m+1$.
 Hence,  it follows from Theorem \ref{thmQPRforComplexGrassmannian} (resp. Corollary \ref{corprodcmScQ}) that $\sum_{j=0}^{n-m}g_{m,j}(\mathbf{t})\sigma^{1^m}\star \sigma^p$
 (resp. $\sum_{p=1}^{m-1}\sum_{r=0}^{n-p} g_{p, j}(\mathbf{t})\sigma^{1^p}H_r$)  involves no $q$-terms. Hence, the only $q$-term  in the expansion of $H_{n+1}$
  comes from $$(-1)^{m-1}\sigma^{1^m}  \sigma^{n-m+1}=(-1)^{m-1}((\alpha_1+\cdots+\alpha_n)\sigma^{(n+1-m, 1, \cdots, 1)}+q),$$ by Corollary \ref{corprodcmScQ} again. Hence,
   $H_{n+1}=(-1)^{m-1}q $ in $QH^*_T(Gr(m, n+1))$.
\end{proof}

\begin{bibdiv}
\begin{biblist}

\bib{AnCh}{article}{
   author={Anderson,D.},
    author={Chen, L.},
   title={Equivariant quantum Schubert polynomials},
   journal={Adv. Math.},
   volume={254},
   date={2014},
   pages={300--330},
  }

\bib{Bertram}{article}{
   author={Bertram, A.},
   title={Quantum Schubert calculus},
   journal={Adv. Math.},
   volume={128},
   date={1997},
   number={2},
   pages={289--305},
   issn={0001-8708},
}

\bib{Billey}{article}{
   author={Billey, S. C.},
   title={Kostant polynomials and the cohomology ring for $G/B$},
   journal={Duke Math. J.},
   volume={96},
   date={1999},
   number={1},
   pages={205--224},
   issn={0012-7094},
}

\bib{Buch-Grassmannian}{article}{
   author={Buch, A. S.},
   title={Quantum cohomology of Grassmannians},
   journal={Compositio Math.},
   volume={137},
   date={2003},
   number={2},
   pages={227--235},
   issn={0010-437X},
}

\bib{Buch-PartialFlag}{article}{
   author={Buch, A. S.},
   title={Quantum cohomology of partial flag manifolds},
   journal={Trans. Amer. Math. Soc.},
   volume={357},
   date={2005},
   number={2},
   pages={443--458 (electronic)},
   issn={0002-9947},
}

\bib{Buch-equivTwostep}{article}{
   author={Buch, A. S.},
   title={Mutations of puzzles and equivariant cohomology of two-step flag varieties },
   journal={to appear in Ann. of Math. (2). Preprint at arxiv: math.CO/1401.3065},
   volume={ },
   date={ },
   number={ },
   pages={ },
   issn={ },
}

\bib{BKPT}{article}{
   author={Buch, A. S.},
   author={Kresch, A.},
   author={Purbhoo,K.},
     author={Tamvakis, H.},
   title={The puzzle conjecture for the cohomology of two-step flag manifolds},
   journal={preprint at arXiv: math.CO/1401.1725},
   volume={},
   date={},
   number={},
   pages={},
   issn={},
}

\bib{BKT-GWinv}{article}{
   author={Buch, A. S.},
   author={Kresch, A.},
   author={Tamvakis, H.},
   title={Gromov-Witten invariants on Grassmannians},
   journal={J. Amer. Math. Soc.},
   volume={16},
   date={2003},
   number={4},
   pages={901--915 (electronic)},
   issn={0894-0347},
}

\bib{BKT-Isotropic}{article}{
   author={Buch, A. S.},
   author={Kresch, A.},
   author={Tamvakis, H.},
   title={Quantum Pieri rules for isotropic Grassmannians},
   journal={Invent. Math.},
   volume={178},
   date={2009},
   number={2},
   pages={345--405},
   issn={0020-9910},
}

\bib{BuMi-quantumKtheory}{article}{
   author={Buch, A. S.},
   author={Mihalcea, L. C.},
   title={Quantum $K$-theory of Grassmannians},
   journal={Duke Math. J.},
   volume={156},
   date={2011},
   number={3},
   pages={501--538},
   issn={0012-7094},
}

\bib{BuRi}{article}{
   author={Buch, A. S.},
   author={Rim{\'a}nyi, R.},
   title={Specializations of Grothendieck polynomials},
   journal={C. R. Math. Acad. Sci. Paris},
   volume={339},
   date={2004},
   number={1},
   pages={1--4},
}

\bib{CMP-I}{article}{
   author={Chaput, P. E.},
   author={Manivel, L.},
   author={Perrin, N.},
   title={Quantum cohomology of minuscule homogeneous spaces},
   journal={Transform. Groups},
   volume={13},
   date={2008},
   number={1},
   pages={47--89},
   issn={1083-4362},
}

\bib{ChPe}{article}{
   author={Chaput, P. E.},
   author={Perrin, N.},
   title={On the quantum cohomology of adjoint varieties},
   journal={Proc. Lond. Math. Soc. (3)},
   volume={103},
   date={2011},
   number={2},
   pages={294--330},
   issn={0024-6115},
}

\bib{CFon}{article}{
   author={Ciocan-Fontanine, I.},
   title={On quantum cohomology rings of partial flag varieties},
   journal={Duke Math. J.},
   volume={98},
   date={1999},
   number={3},
   pages={485--524},
   issn={0012-7094},
}


\bib{FuPa}{article}{
   author={Fulton, W.},
   author={Pandharipande, R.},
   title={Notes on stable maps and quantum cohomology},
   conference={
      title={Algebraic geometry---Santa Cruz 1995},
   },
   book={
      series={Proc. Sympos. Pure Math.},
      volume={62},
      publisher={Amer. Math. Soc.},
      place={Providence, RI},
   },
   date={1997},
   pages={45--96},
}
		
\bib{GaSa}{article}{
   author={Gatto, L.},
   author={Santiago, T.},
   title={Equivariant Schubert calculus},
   journal={Ark. Mat.},
   volume={48},
   date={2010},
   number={1},
   pages={41--55},
   issn={0004-2080},
}

\bib{Grah}{article}{
   author={Graham, W.},
   title={Positivity in equivariant Schubert calculus},
   journal={Duke Math. J.},
   volume={109},
   date={2001},
   number={3},
   pages={599--614},
}

\bib{Hump-LieAlg}{book}{
   author={Humphreys, J. E.},
   title={Introduction to Lie algebras and representation theory},
   series={Graduate Texts in Mathematics},
   volume={9},
   publisher={Springer-Verlag},
   place={New York},
   date={1978},
   pages={xii+171},
   isbn={0-387-90053-5},
}

\bib{Hump-AlgGroup}{book}{
   author={Humphreys, J. E.},
   title={Linear algebraic groups},
   note={Graduate Texts in Mathematics, No. 21},
   publisher={Springer-Verlag},
   place={New York},
   date={1975},
   pages={xiv+247},
}

\bib{IMN}{article}{
   author={Ikeda, T.},
   author={Mihalcea, L.C.},
   author={Naruse, H.},
   title={Factorial $P$- and $Q$-Schur functions represent equivariant quantum Schubert classes},
   journal={preprint at arxiv: math.CO/1402.0892},
   date={},
   number={ },
   pages={ },
}

\bib{Kim-EquiQH}{article}{
   author={Kim, B.},
   title={On equivariant quantum cohomology},
   journal={Internat. Math. Res. Notices},
   date={1996},
   number={17},
   pages={841--851},
   issn={1073-7928},
}

\bib{knutson:noncomplex}{article}{
   author={Knutson, A.},
   title={A Schubert Calculus recurrence from the non-complex $W$-action on $G/B$},
   journal={preprint at arxiv: math.CO/0306304},
   date={ },
   number={ },
   pages={ },
   issn={ },

}

 \bib{KnutTao}{article}{
   author={Knutson, A.},
   author={Tao, T.},
   title={Puzzles and (equivariant) cohomology of Grassmannians},
   journal={Duke Math. J.},
   volume={119},
   date={2003},
   number={2},
   pages={221--260},
   issn={0012-7094},
}

\bib{KoKu}{article}{
   author={Kostant, B.},
   author={Kumar, S.},
   title={The nil Hecke ring and cohomology of $G/P$ for a Kac-Moody group
   $G$},
   journal={Adv. in Math.},
   volume={62},
   date={1986},
   number={3},
   pages={187--237},
   issn={0001-8708},
}

\bib{Kreiman}{article}{
   author={Kreiman, V.},
   title={Equivariant Littlewood-Richardson skew tableaux},
   journal={Trans. Amer. Math. Soc.},
   volume={362},
   date={2010},
   number={5},
   pages={2589--2617},
   issn={0002-9947},
}

\bib{LaRaSa}{article}{
   author={Lakshmibai, V.},
   author={Raghavan, K. N.},
   author={Sankaran, P.},
   title={Equivariant Giambelli and determinantal restriction formulas for
   the Grassmannian},
   journal={Pure Appl. Math. Q.},
   volume={2},
   date={2006},
   number={3, Special Issue: In honor of Robert D. MacPherson.},
   pages={699--717},
   issn={1558-8599},
}

\bib{Laksov}{article}{
   author={Laksov, D.},
   title={Schubert calculus and equivariant cohomology of Grassmannians},
   journal={Adv. Math.},
   volume={217},
   date={2008},
   number={4},
   pages={1869--1888},
   issn={0001-8708},
}

\bib{LLMS}{article}{
   author={Lam, T.},
   author={Lapointe, L.},
   author={Morse, J.},
   author={Shimozono, M.},
   title={Affine insertion and Pieri rules for the affine Grassmannian},
   journal={Mem. Amer. Math. Soc.},
   volume={208},
   date={2010},
   number={977},
   issn={0065-9266},
   isbn={978-0-8218-4658-2},
}

\bib{LaSh-GoverPaffineGr}{article}{
   author={Lam, T.},
   author={Shimozono, M.},
   title={Quantum cohomology of $G/P$ and homology of affine Grassmannian},
   journal={Acta Math.},
   volume={204},
   date={2010},
   number={1},
   pages={49--90},
   issn={0001-5962},
}

\bib{LaSh-affinePieri}{article}{
   author={Lam, T.},
   author={Shimozono, M.},
   title={Equivariant Pieri rule for the homology of the affine
   Grassmannian},
   journal={J. Algebraic Combin.},
   volume={36},
   date={2012},
   number={4},
   pages={623--648},
   issn={0925-9899},
}

\bib{LaSh-QDSP}{article}{
   author={Lam, T.},
   author={Shimozono, M.},
   title={Quantum double Schubert polynomials represent Schubert classes},
  journal={Proc. Amer. Math. Soc.},
   volume={142},
   date={2014},
   number={3},
   pages={835--850},
}

\bib{LasSch}{article}{
   author={Lascoux, A.},
   author={Sch{\"u}tzenberger, M.-P.},
   title={Polyn\^omes de Schubert},
   language={French, with English summary},
   journal={C. R. Acad. Sci. Paris S\'er. I Math.},
   volume={294},
   date={1982},
   number={13},
   pages={447--450},
}

\bib{LeungLi-functorialproperties}{article}{
   author={Leung, N. C.},
   author={Li, C.},
   title={Functorial relationships between $QH^*(G/B)$ and $QH^*(G/P)$},
   journal={J. Differential Geom.},
   volume={86},
   date={2010},
   number={2},
   pages={303--354},
   issn={0022-040X},
}

\bib{LeungLi-GWinv}{article}{
   author={Leung, N. C.},
   author={Li, C.},
   title={Gromov-Witten invariants for $G/B$ and Pontryagin product for
   $\Omega K$},
   journal={Trans. Amer. Math. Soc.},
   volume={364},
   date={2012},
   number={5},
   pages={2567--2599},
   issn={0002-9947},
}

\bib{LeungLi-QuantumToClassical}{article}{
   author={Leung, N. C.},
   author={Li, C.},
   title={Classical aspects of quantum cohomology of generalized flag
   varieties},
   journal={Int. Math. Res. Not. IMRN},
   date={2012},
   number={16},
   pages={3706--3722},
   issn={1073-7928},
}

\bib{LeungLi-QPR}{article}{
   author={Leung, N. C.},
   author={Li, C.},
   title={Quantum Pieri rules for tautological subbundles},
   journal={Adv. Math.},
   volume={248},
   date={2013},
   pages={279--307},
 }

\bib{Li-functorial}{article}{
     author={Li, C.},
   title={Functorial relationships between $QH^*(G/B)$ and $QH^*(G/P)$, (II)},
   journal={Asian J. Math.},
   volume={19 },
   date={ },
   number={2 },
   pages={203-234 },
   issn={ },
}

\bib{LiRa}{article}{
     author={Li, C.},
     author={Ravikumar, V.},
   title={Equivariant Pieri rules for isotropic Grassmannians},
   journal={preprint at arxiv: math.AG/1406.4680.},
   volume={ },
   date={ },
   number={ },
   pages={ },
   issn={ },
}


\bib{Miha-positivity}{article}{
   author={Mihalcea, L. C.},
   title={Positivity in equivariant quantum Schubert calculus},
   journal={Amer. J. Math.},
   volume={128},
   date={2006},
   number={3},
   pages={787--803},
   issn={0002-9327},
}

\bib{Miha-EQSC}{article}{
   author={Mihalcea, L. C.},
   title={Equivariant quantum Schubert calculus},
   journal={Adv. Math.},
   volume={203},
   date={2006},
   number={1},
   pages={1--33},
   issn={0001-8708},
}

\bib{Miha-EQCRandCri}{article}{
   author={Mihalcea, L. C.},
   title={On equivariant quantum cohomology of homogeneous spaces: Chevalley
   formulae and algorithms},
   journal={Duke Math. J.},
   volume={140},
   date={2007},
   number={2},
   pages={321--350},
   issn={0012-7094},
}

\bib{Mihalcea-EQGiambelli}{article}{
   author={Mihalcea, L. C.},
   title={Giambelli formulae for the equivariant quantum cohomology of the
   Grassmannian},
   journal={Trans. Amer. Math. Soc.},
   volume={360},
   date={2008},
   number={5},
   pages={2285--2301},
   issn={0002-9947},
}

\bib{Peterson}{article}{
   author={Peterson, D.},
   title={Quantum cohomology of $G/P$},
   journal={ Lecture notes at MIT, 1997},
   volume={},
   date={ },
   note={Notes by J. Lu and K. Rietsch},
   number={ },
   pages={ },
   issn={ },
}

\bib{Robinson}{article}{
   author={Robinson, S.},
   title={A Pieri-type formula for $H^\ast_T({\rm SL}_n(\Bbb
   C)/B)$},
   journal={J. Algebra},
   volume={249},
   date={2002},
   number={1},
   pages={38--58},
   issn={0021-8693},
}

\bib{SiTi}{article}{
   author={Siebert, B.},
   author={Tian, G.},
   title={On quantum cohomology rings of Fano manifolds and a formula of
   Vafa and Intriligator},
   journal={Asian J. Math.},
   volume={1},
   date={1997},
   number={4},
   pages={679--695},
}

\bib{Sottile}{article}{
   author={Sottile, F.},
   title={Pieri's formula for flag manifolds and Schubert polynomials},
   language={ },
   journal={Ann. Inst. Fourier (Grenoble)},
   volume={46},
   date={1996},
   number={1},
   pages={89--110},
}
\bib{ThYo}{article}{
    author={Thomas, H.},
    author={Yong, A.},
   title={Equivariant Schubert calculus and jeu de taquin},
   journal={to appear in Annales de l'Institut Fourier},
   note={Preprint at arXiv: math.CO/1207.3209},
   volume={ },
   date={ },
   number={ },
   pages={ },
   issn={ },

}
\bib{Woodward}{article}{
   author={Woodward, C. T.},
   title={On D. Peterson's comparison formula for Gromov-Witten invariants
   of $G/P$},
   journal={Proc. Amer. Math. Soc.},
   volume={133},
   date={2005},
   number={6},
   pages={1601--1609 (electronic)},
   issn={0002-9939},
}
\end{biblist}
\end{bibdiv}

\end{document}